\newtheorem{lem}{Lemma}
\newtheorem{prop}[lem]{Proposition}
\newtheorem{thm}[lem]{Theorem}
\newtheorem{cor}[lem]{Corollary}
\theoremstyle{remark}
\newtheorem{rem}[lem]{Remark}
\theoremstyle{definition}
\def\Ex{{\mathbb E}}
\def\Pr{{\mathbb P}}
\def\er{{\mathbb R}}
\def\en{{\mathbb N}}
\def\zet{{\mathbb Z}}
\def\ve{\varepsilon}
\def\Log{\operatorname{Log}}
\def\supp{\operatorname{supp}}
\def\gamman{\beta}
\title[Operator $\ell_p\to\ell_q$ norms of  random matrices with iid entries]{Operator $\ell_p\to\ell_q$ norms of  random matrices with iid entries}
\author[R. Lata{\l}a]{Rafa{\l} Lata{\l}a}
\address{Institute of Mathematics, University of Warsaw, Banacha 2, 02--097 Warsaw, Poland.}
\email{rlatala@mimuw.edu.pl}
\author[M. Strzelecka]{Marta Strzelecka}
\address{Institute of Mathematics, University of Warsaw, Banacha 2, 02--097 Warsaw, Poland.}
\email{martast@mimuw.edu.pl}
\subjclass[2020]{%
Primary 60B20;  % Random matrices (probabilistic aspects)
Secondary  60E15;  % Inequalities; stochastic orderings
		46B09. % Probabilistic methods in Banach space theory
}
\begin{document}

\maketitle

\begin{abstract}
We prove that for every $p,q\in[1,\infty]$ and every random matrix $X=(X_{i,j})_{i\le m, j\le n}$ 
with iid centered entries satisfying the regularity assumption 
$\|X_{i,j}\|_{2\rho} \le \alpha \|X_{i,j}\|_{\rho}$ for every $\rho \ge 1$, 
the expectation of the operator norm of $X$ from $\ell_p^n$ to $\ell_q^m$ is comparable, 
up to a constant depending only on $\alpha$, to
\[
m^{1/q}\sup_{t\in B_p^n}\Bigl\|\sum_{j=1}^nt_jX_{1,j}\Bigr\|_{ q\wedge \Log m}
+n^{1/p^*}\sup_{s\in B_{q^*}^m}\Bigl\|\sum_{i=1}^{m} s_iX_{i,1}\Bigr\|_{ p^*\wedge \Log n}.
\]
We give more explicit formulas, expressed as exact functions of $p$, $q$, $m$, and $n$, 
for the asymptotic operator norms in the case when the entries $X_{i,j}$ are: 
Gaussian, Weibullian, log-concave tailed, and log-convex tailed. 
In the range $1\le q\le 2\le p$ we provide two-sided bounds under a weaker regularity assumption 
$(\Ex X_{1,1}^4)^{1/4}\leq \alpha (\Ex X_{1,1}^2)^{1/2}$.
\end{abstract}

\tableofcontents

%%%%%%%%%%%%%%%%%%%%%%%%%%%%%%%%%%%%%%%

\section{Introduction and main results}

Let $X=(X_{i,j})_{i\le m, j\le n}$ be an  $m\times n$ random matrix with iid entries. 
Seginer proved in  \cite{Seginer} that if the entries $X_{i,j}$ are symmetric, then the expectation 
of the spectral norm of $X$ is of the same order as the expectation of the maximum Euclidean norm of 
rows and columns of $X$. 
In this article we address a natural question: do there exist similar formulas for 
operator norms of $X$ from $\ell_p^n$ to $\ell_q^m$, 
where $p,q\in[1,\infty]$?
Recall that if $A=(A_{i,j})_{i\le m, j\le n}$ is an $m\times n$ matrix, then
\[
\|A\|_{\ell_p^n\to \ell_q^m} 
= \sup_{t\in B_p^n} \|At\|_q 
= \sup_{t\in B_p^n, s\in B_{q^*}^m} s^TAt 
= \sup_{t\in B_p^n, s\in B_{q^*}^m} \sum_{i\le m, j\le n} A_{i,j}s_it_j
\]
denotes its operator norm from $\ell_p^n$ to $\ell_q^m$; by $\rho^*$ we denote the H{\"o}lder conjugate of 
$\rho\in [1,\infty]$, i.e., the unique element of $[1,\infty]$ satisfying $\frac 1\rho+\frac 1{\rho^*}=1$, 
and by $\|x\|_{ \rho} = (\sum_{i}|x_i|^{ \rho})^{1/{ \rho}}$  we denote the $\ell_\rho$-norm of a vector $x$
(a~similar notation, $\|Z\|_\rho = (\Ex|Z|^\rho)^{1/\rho}$ is used for the $L_\rho$-norm of a 
random variable $Z$). If $p=2=q$, then   $\|A\|_{\ell_p^n\to \ell_q^m} $  is a spectral norm of $A$,
so the case $p=2=q$ corresponds to the aforementioned result by Seginer.

Let us note that bounds for 
$\Ex \|X\|_{\ell_p^n\to \ell_q^m} $  yield both tail bounds for $\|X\|_{\ell_p^n\to \ell_q^m} $ 
and bounds for $(\Ex \|X\|_{\ell_p^n\to \ell_q^m}^{\rho})^{1/\rho}$ for every $\rho\ge 1$, 
provided that the entries of $X$ satisfy a mild regularity assumption; see 
\cite[Proposition~1.16]{APSS} for more details. 
Thus, estimating the expectation of the operator norm automatically gives us  more information 
about the behaviour of the operator norm.

Not much is known about the  nonasymptotic behaviour of the operator norms of iid random matrices if 
$(p,q)\neq (2,2)$; see the introduction to article \cite{LSChevet} for an overview of the  state of the art. 
In the case when $X_{i,j}=g_{i,j}$ are iid standard $\mathcal{N}(0,1)$ random variables one may use 
the classical Chevet's inequality \cite{Ch} to derive the following two-sided bounds 
(see \cite{LSChevet} for a~detailed calculation):
\begin{align}
\label{eq:formula-Gaussians}
\Ex\bigl\|(g_{i,j})_{i\leq m,j\leq n}\bigr\|_{\ell_p^n\rightarrow\ell_q^m}
&\sim
\begin{cases}
m^{1/q-1/2}n^{1/p^*}+n^{1/p^*-1/2}m^{1/q},&p^*,q\leq 2,
\\
 \sqrt{p^*\wedge \Log n}\:n^{1/p^*}m^{1/q-1/2}+m^{1/q},&q\leq 2\leq p^*,
\\
n^{1/p^*}+ \sqrt{q\wedge \Log m}\: m^{1/q}n^{1/p^*-1/2},&p^*\leq 2\leq q,
\\
\sqrt{p^*\wedge \Log n}\: n^{1/p^*}+\sqrt{q\wedge \Log m}\: m^{1/q},&2\leq q,p^*
\end{cases}
\nonumber
\\ 
&  \sim \sqrt{p^* \wedge \Log n}\: m^{(1/q-1/2)\vee 0} n^{1/p^*} +  
\sqrt{q \wedge \Log m}\: n^{(1/p^*-1/2)\vee 0} m^{1/q},
\end{align}   
where
\[
\Log n=\max\{1,\ln n\},
\] 
and for two nonnegative functions $f$ and $g$ we write $f\gtrsim g$ (or $g\lesssim f$) 
if there exists an absolute constant $C$ such that $Cf \ge g$;
the notation $f \sim g$ means that $f \gtrsim g$ and $g \gtrsim f$. 
We write $\lesssim_{ \alpha}$, $\sim_{K,\gamma}$, etc.\ if the underlying constant depends 
on the parameters given in the subscripts.
Equation \eqref{eq:formula-Gaussians} yields that for $n=m$ we have
\[
\Ex\bigl\|(g_{i,j})_{i,j=1}^n\bigr\|_{\ell_p^n\rightarrow\ell_q^n}\sim
\begin{cases}
n^{1/q+1/p^*-1/2},&p^*,q\leq 2,
\\
 \sqrt{p^*\wedge q\wedge \Log n}\: n^{1/(p^*\wedge q)},&  p^*\vee q\geq 2.
\end{cases}
\]

However, even in the case of exponential entries it was initially not clear for us what the order 
of the expected operator norm is. This question led us to deriving  in \cite{LSChevet} two-sided 
Chevet type bounds for iid exponential and, more generally, Weibull random vectors with shape 
parameter $r\in[1,2]$. 
In consequence, we obtained the desired nonasymptotic behaviour of operator norm in the Weibull case 
when $r\in[1,2]$ ($r=1$ is the exponential case). 
Note that this does not cover the case of a matrix $(\ve_{i,j})_{i,j}$ with iid Rademacher entries, which 
corresponds to the case $r=\infty$.
It is well known (by \cite{Bennett,CarlMaureyPuhl}, cf. \cite[Remark 4.2]{APSS})  that in this case 
 \begin{equation}
\label{eq:Radsimple}
\Ex\bigl\|(\ve_{i,j})_{i\leq m,j\leq n}\bigr\|_{\ell_p^n\rightarrow\ell_q^m}\sim_{p,q}
\begin{cases}
m^{1/q-1/2}n^{1/p^*}+n^{1/p^*-1/2}m^{1/q},&p^*,q\leq 2,
\\
m^{1/q-1/2}n^{1/p^*}+m^{1/q},&q\leq 2\leq p^*,
\\
n^{1/p^*}+n^{1/p^*-1/2}m^{1/q},&p^*\leq 2\leq q,
\\
 n^{1/p^*}+m^{1/q},&2\leq p^*,q.
\end{cases}
\end{equation}
Moreover, it is not hard to show that  constants in lower bounds do not depend on $p$ and $q$,
whereas \cite[Lemma~172]{N} shows that in the case of square matrices the constants in 
\eqref{eq:Radsimple} may be chosen to be independent of $p$ and $q$, i.e.,
\[
\Ex\bigl\|(\ve_{i,j})_{i,j=1}^n\bigr\|_{\ell_p^n\rightarrow\ell_q^n}
\sim
\begin{cases}
n^{1/q+1/p^*-1/2},&p^*,q\leq 2,
\\
n^{1/ (p^*\wedge q)},&  p^*\vee q \geq 2.
\end{cases}
\]
It is natural to ask if  the upper bound in \eqref{eq:Radsimple} does not depend on $p$ and $q$ 
also in the rectangular case. Surprisingly, the answer to this question  is negative --- 
in Corollary~\ref{cor:Radrect} below we provide an exact two-sided bound (different than the one 
in \eqref{eq:Radsimple}) up to a constant non-depending on $p$ and $q$.

The two-sided bounds for operator norms in all the aforementioned special cases may be expressed 
in the following common form:
\begin{align*}
\Ex\bigl\|(X_{i,j})_{i\leq m, j\leq n}\bigr\|_{\ell_p^n\rightarrow\ell_q^m}
 \sim
m^{1/q}\sup_{t\in B_p^n}\Bigl\|\sum_{j=1}^nt_jX_{1,j}\Bigr\|_{ q\wedge \Log m}
+n^{1/p^*}\sup_{s\in B_{q^*}^m}\Bigl\|\sum_{i=1}^{m} s_iX_{i,1}\Bigr\|_{ p^*\wedge \Log n}.
\end{align*}
Therefore, it is natural to ask if this formula is valid for other distributions of entries. 
We are able to prove it for the class of random variables $X_{i,j}$ satisfying
the following mild
regularity condition
\begin{equation}
\label{alphareg}
\|X_{i,j}\|_{2\rho}\leq \alpha \|X_{i,j}\|_{\rho} \quad \mbox{for all }  \rho\geq 1.
\end{equation}
This class contains, among others, Gaussian, Rademacher, log-concave, and Weibull random variables 
with any parameter $r\in(0,\infty)$.   Condition \eqref{alphareg} may be 
rephrased in  terms of tails of random variables $X_{i,j}$ (see Proposition \ref{prop:equiv-reg-N}).

 The main result of this paper is the following two-sided bound. 

\begin{thm}
\label{thm:rect}
Let $(X_{i,j})_{i,j\leq n}$ be iid centered random variables satisfying regularity condition \eqref{alphareg}
and let $p,q\in [1,\infty]$. Then
\begin{align*}
\Ex\bigl\|(X_{i,j})_{i\leq m, j\leq n}\bigr\|_{\ell_p^n\rightarrow\ell_q^m}
 \sim_{\alpha}
m^{1/q}\sup_{t\in B_p^n}\Bigl\|\sum_{j=1}^nt_jX_{1,j}\Bigr\|_{ q\wedge \Log m}
+n^{1/p^*}\sup_{s\in B_{q^*}^m}\Bigl\|\sum_{i=1}^{m} s_iX_{i,1}\Bigr\|_{ p^*\wedge \Log n}.
\end{align*}
\end{thm}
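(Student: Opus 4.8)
The plan is to prove the lower and upper bounds separately, and in each case to exploit the duality
$\|A\|_{\ell_p^n\to\ell_q^m}=\|A^T\|_{\ell_{q^*}^m\to\ell_{p^*}^n}$, which interchanges the roles of the
two terms on the right-hand side; this means it suffices to produce, for each side of the estimate, a
bound by the \emph{first} summand $m^{1/q}\sup_{t\in B_p^n}\bigl\|\sum_j t_jX_{1,j}\bigr\|_{q\wedge\Log m}$
(call it the ``row term''), and then apply the transpose to recover the ``column term''
$n^{1/p^*}\sup_{s\in B_{q^*}^m}\bigl\|\sum_i s_iX_{i,1}\bigr\|_{p^*\wedge\Log n}$. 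By standard symmetrization
(at the cost of an absolute constant, using that the entries are centered and satisfy \eqref{alphareg},
which forces boundedness of $\|X_{1,1}\|_2/\|X_{1,1}\|_1$), I may and will assume the $X_{i,j}$ are
symmetric. For the \textbf{lower bound} I would test the operator norm on extremal vectors: fixing the
row index $i=1$ and a vector $t\in B_p^n$, one has $\|Xt\|_q\ge\|Xt\|_\infty$ pointwise and, when
$q\le\Log m$, one can do better by noting that among $m$ iid copies of $\sum_j t_jX_{i,j}$ the
$\ell_q$-norm is of order $m^{1/q}$ times a typical value, which by a Paley--Zygmund / small-ball argument
under \eqref{alphareg} is of order $\bigl\|\sum_j t_jX_{1,j}\bigr\|_{q\wedge\Log m}$; taking the supremum
over $t$ gives the row term, and the transpose gives the column term.

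For the \textbf{upper bound}, the natural route is to decouple the two effects. Write
$\|A\|_{\ell_p^n\to\ell_q^m}=\sup_{t\in B_p^n}\|At\|_q$ and split the analysis according to whether the
``heavy'' direction is the row or the column; more concretely I would pass through the four ranges of
$(p^*,q)$ relative to $2$ exactly as in the Gaussian and Rademacher templates \eqref{eq:formula-Gaussians},
\eqref{eq:Radsimple}, reducing the problem in each range to two ingredients: (i) a one-dimensional estimate
controlling $\sup_{t\in B_p^n}\bigl\|\sum_j t_jX_{1,j}\bigr\|_{q\wedge\Log m}$ — essentially a
Chevet-type bound for a single random vector, available from \cite{LSChevet}-style arguments under
\eqref{alphareg} — and (ii) a ``union over $m$ rows'' estimate showing that taking $\ell_q^m$ of $m$ iid
such one-dimensional quantities only costs the factor $m^{1/q}$ together with the upgrade of the inner
$L_{q}$-norm to $L_{q\wedge\Log m}$ (the truncation at $\Log m$ being precisely the point where a
union bound over $m$ sub-events becomes lossless). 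The chaining/net argument controlling
$\sup_{t\in B_p^n}$ of the empirical process $t\mapsto\|Xt\|_q$ is where \eqref{alphareg} is used in
earnest: it yields, via the tail reformulation of Proposition~\ref{prop:equiv-reg-N}, the moment
comparisons needed to run a standard $\ell_p$-net covering argument with the correct dependence on
$\Log n$.

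I expect the \textbf{main obstacle} to be the upper bound in the mixed regime, say $q\le 2\le p^*$
(and its transpose), where neither $\ell_q$ nor $\ell_p$ is Euclidean: here one cannot simply invoke a
Gaussian comparison, and the interplay between the $\sup_{t\in B_p^n}$ over a large-diameter set and the
$\ell_q^m$ aggregation over rows must be handled simultaneously rather than by a crude product bound.
The device I would use is a two-level union bound — first over an $\ell_p^n$-net, exploiting that under
\eqref{alphareg} a single linear form $\sum_j t_jX_{1,j}$ has $\psi$-type tails controlled by its
$L_{\Log n}$-norm, and then over the $m$ rows, exploiting the $\Log m$-truncation — and to absorb the
net cardinality $n^{O(1/p^*)}$ into the already-present factor $n^{1/p^*}$ by choosing the net
granularity appropriately. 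Once the four cases are assembled, the uniform formula in the statement
follows by checking that the stated expression $m^{1/q}\sup_t\|\cdot\|_{q\wedge\Log m}
+n^{1/p^*}\sup_s\|\cdot\|_{p^*\wedge\Log n}$ dominates and is dominated by the case-by-case bounds
up to a constant depending only on $\alpha$, which is routine given \eqref{alphareg}.
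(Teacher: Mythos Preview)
Your lower-bound sketch is essentially the paper's argument: fix $t\in B_p^n$, apply Paley--Zygmund to the iid nonnegative variables $Y_i=|\sum_j t_jX_{i,j}|$ (whose moments are regular by \eqref{eq:compmomregalpha}) to get $\Ex(\sum_i Y_i^q)^{1/q}\gtrsim_\alpha m^{1/q}\|Y_1\|_{q\wedge\Log m}$, then take the sup over $t$ and use duality. Good.

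The upper bound proposal has a real gap. Your plan hinges on a ``two-level union bound'': first over an $\ell_p^n$-net in $B_p^n$, then over the $m$ rows. But an $\varepsilon$-net of $B_p^n$ in the $\ell_p^n$-metric has cardinality $(C/\varepsilon)^n$, not $n^{O(1/p^*)}$; there is no way to absorb an $e^{cn}$ factor into $n^{1/p^*}$. A naive chaining of $t\mapsto\|Xt\|_q$ over $B_p^n$ therefore loses badly unless $p^*\gtrsim\Log n$ (where $B_p^n$ is essentially $B_1^n$ and one really can reduce to coordinate vectors). The paper handles this by a completely different mechanism: one decomposes $B_p^n$ (and $B_{q^*}^m$) into a piece with small $\ell_\infty$-norm and a piece with small support, $B_p^n\subset T_1+T_2$ with $T_1=B_p^n\cap aB_\infty^n$ and $T_2=\{t\in B_p^n:|\supp t|\le a^{-p}\}$. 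Nets are used only on the small-support pieces, where cardinality is genuinely polynomial; on the small-$\ell_\infty$ pieces one passes to Gaussians via the contraction principle (losing a $\Log^\beta(mn)$ factor absorbed by the $\ell_\infty$ bound) and applies Chevet's inequality.

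You also misidentify the hard range. The case $q\le 2\le p^*$ is in fact among the easier ones: once you have the upper bound for $p^*=2$, you get all $p^*\le 2$ by $\|\cdot\|_{\ell_p^n\to\ell_q^m}\le n^{1/p^*-1/2}\|\cdot\|_{\ell_2^n\to\ell_q^m}$, and Proposition~\ref{rem:smallq} shows the right-hand side simplifies accordingly. The genuinely delicate regime is $2\le p^*,q\lesssim_\alpha 1$ (both slightly above $2$), where neither coordinate of $(p^*,q)$ is large enough to make nets cheap nor small enough to reduce to the Hilbertian case. There the paper needs, on top of the decomposition above, a Riesz--Thorin interpolation for the diagonal $p^*=q$, a truncation of the entries $X_{i,j}$ at levels $\alpha^k$ combined with the $\ell_2\to\ell_2$ bound of \cite{La} (Lemma~\ref{lem:vial2tol2}), and a further $k$-dependent splitting of $B_{q^*}^m$. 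None of these ingredients appear in your outline, and I do not see how to avoid them.
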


\begin{rem}
If $q\le 2\le p$, then the assertion of Theorem~\ref{thm:rect} holds under a weaker condition that 
random variables $X_{i,j}$ are  independent, centered,  have equal variances, and satisfy
$\|X_{i,j}\|_4\le \alpha \|X_{i,j}\|_2$. We prove this in Subsection~\ref{seq:case-leq2}.
\end{rem}

\begin{rem}\label{rmk:non-centered}
In the case when random variables $X_{i,j}$ are not necessarily  centered, Theorem~\ref{thm:rect} and Jensen's inequality  imply that (see Subsection~\ref{sect:noncentered} for a detailed proof)
\begin{align}	
\label{eq:noncentered-aim}
\notag
\Ex\bigl\|(X_{i,j})_{i, j}\bigr\|_{\ell_p^n\rightarrow\ell_q^m}
 & \sim_{\alpha}
m^{1/q}n^{1/p^*}|\Ex X_{1,1}|+
m^{1/q}\sup_{t\in B_p^n}\Bigl\|\sum_{j=1}^nt_j(X_{1,j}-\Ex X_{1,1})\Bigr\|_{ q\wedge \Log m}
\\ & \qquad\quad +n^{1/p^*}\sup_{s\in B_{q^*}^m}\Bigl\|\sum_{i=1}^{m} s_i(X_{i,1}-\Ex X_{1,1})\Bigr\|_{ p^*\wedge \Log n}
 \end{align}
provided that iid random variables $X_{i,j}$, $i\le m, j\le n$, satisfy
\begin{equation} 
\label{eq:noncentered-assumpt}
\|X_{i,j}-\Ex X_{i,j}\|_{2\rho}\leq \alpha \|X_{i,j}-\Ex X_{i,j}\|_{\rho} \quad \mbox{for all }  \rho\geq 1.
\end{equation}
\end{rem}

The formula in Theorem~\ref{thm:rect} looks quite simple but, because of the suprema appearing in it, 
it is not always easy to see how the right-hand side depends on $p$ and $q$.
In Section~\ref{sect:examples} we give exact formulas for quantities comparable to the one from 
Theorem~\ref{thm:rect} in the case when the entries are Weibulls (this includes exponential and 
Rademacher random variables) or, more generally, when the entries have log-concave or log-convex tails.

The next proposition reveals how the two-sided bound from Theorem~\ref{thm:rect} depends on $p$ and $q$ 
in the case when  $n=m$ and  $p^*\vee q \ge 2$.

\begin{prop}
\label{prop:squarelargep*}
Let $p,q\in [1,\infty)$ and $p^*\vee  q\geq 2$. Let $X_{i,j}$ be iid centered random variables
satisfying \eqref{alphareg}. Then
\[
n^{1/q}\sup_{t\in B_p^n}\Bigl\|\sum_{j=1}^nt_jX_{1,j}\Bigr\|_{ q\wedge \Log n}
+n^{1/p^*}\sup_{s\in B_{q^*}^n}\Bigl\|\sum_{i=1}^n s_iX_{i,1}\Bigr\|_{ p^*\wedge \Log n}
\sim_\alpha n^{ 1/(p^*\wedge q )}\|X_{1,1}\|_{ q\wedge \Log n}.
\]
\end{prop}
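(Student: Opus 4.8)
The plan is to treat the two summands symmetrically (switching the roles of $p,q,m,n$ to pass from one to the other via transposition, since $\|X\|_{\ell_p^n\to\ell_q^m}=\|X^T\|_{\ell_{q^*}^m\to\ell_{p^*}^n}$), so it suffices to analyze the first term $n^{1/q}\sup_{t\in B_p^n}\|\sum_j t_jX_{1,j}\|_{q\wedge\Log n}$ under the normalization $n=m$ and $p^*\vee q\ge 2$. The right-hand side $n^{1/(p^*\wedge q)}\|X_{1,1}\|_{q\wedge\Log n}$ should come out of exactly one of the two summands depending on whether $q\ge 2$ or $p^*\ge 2$ (these are not exclusive, and in the overlap $p^*,q\ge 2$ both summands are comparable to it), so I would split into the cases (i) $q\ge 2$ and (ii) $p^*\ge 2$, and within each estimate $\sup_{t\in B_p^n}\|\sum_j t_jX_{1,j}\|_{r}$ for the relevant $r$.

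The key estimate is a two-sided bound for $\sup_{t\in B_p^n}\|\sum_{j=1}^n t_jX_{1,j}\|_{r}$ when $r\ge 2$ and $r\ge p^*$ (hence $p\le 2$ or $r$ is large). Using the regularity assumption \eqref{alphareg}, the moments $\|X_{1,j}\|_r$ of the single entry are controlled, and for a fixed $t$ one has, up to constants depending on $\alpha$, $\|\sum_j t_jX_{1,j}\|_r \sim \|X_{1,1}\|_r\,\|t\|_2 + \|X_{1,1}\|_{\sim r}\,\|t\|_r$ by a Rosenthal-type / moment comparison for sums of iid $\alpha$-regular variables (this is the single-row case of the Chevet-type analysis underlying Theorem~\ref{thm:rect}, or can be cited from the earlier sections). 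Taking the supremum over $t\in B_p^n$: since $r\ge p^*\ge 2\ge$ (so $p\le 2\le r$), $\sup_{t\in B_p^n}\|t\|_2 = 1$ and $\sup_{t\in B_p^n}\|t\|_r = \sup_{t\in B_p^n}\|t\|_r$; because $r\ge p^*$ we get $\|t\|_r\le\|t\|_{p^*}\le 1$ is false in general — rather one computes $\sup_{t\in B_p^n}\|t\|_r = n^{(1/r-1/p)\vee 0}=1$ when $r\ge p^*\ge 2$ is compatible with $p\le 2$... I would instead just record $\sup_{t\in B_p^n}\|t\|_2 \le 1$ and $\sup_{t\in B_p^n}\|t\|_r\le 1$ (both hold once $p\le 2\le r$, and the case $p\ge 2$ only occurs when $p^*\le 2$, i.e.\ $q\ge 2$, and is handled separately), so that $\sup_{t\in B_p^n}\|\sum_j t_jX_{1,j}\|_r\sim_\alpha \|X_{1,1}\|_r$. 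Plugging this in with $r=q\wedge\Log n$ in the first summand gives $n^{1/q}\|X_{1,1}\|_{q\wedge\Log n}$, and symmetrically the second summand is $n^{1/p^*}\|X_{1,1}\|_{p^*\wedge\Log n}$; the assumption $p^*\vee q\ge 2$ guarantees that the larger of $1/q$ and $1/p^*$ equals $1/(p^*\wedge q)$, and a monotonicity-of-$L_r$-norms argument (together with $\|X_{1,1}\|_{2\rho}\le\alpha\|X_{1,1}\|_\rho$ to reverse it) shows $\|X_{1,1}\|_{p^*\wedge\Log n}\sim_\alpha\|X_{1,1}\|_{q\wedge\Log n}$ up to the harmless replacement, so the dominant summand is $n^{1/(p^*\wedge q)}\|X_{1,1}\|_{q\wedge\Log n}$ and the other is no larger.

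The main obstacle is the regime where one of $p,q$ is small (say $q<2$, forcing $p^*>2$): there the first summand involves $\|\sum_j t_jX_{1,j}\|_{q}$ with $q<2$, and the single-entry moment bound above is not available for $r=q<2$; one must instead argue that this summand is dominated by the second. Concretely, for $q\le 2$ one has $\|\sum_j t_jX_{1,j}\|_q\le\|\sum_j t_jX_{1,j}\|_2 = \|X_{1,1}\|_2\|t\|_2\le\|X_{1,1}\|_2$ (using iid, centered, so the $L_2$ norm is exact), giving $n^{1/q}\|X_{1,1}\|_2$ for the first summand, while the second summand, by the $p^*\ge 2$ analysis above, is $n^{1/p^*}\|X_{1,1}\|_{p^*\wedge\Log n}\ge n^{1/p^*}\|X_{1,1}\|_2$; since $1/p^*\ge 1/q$ is false here ($q\le 2\le p^*$ means $1/q\ge 1/2\ge 1/p^*$), this comparison goes the wrong way and I must be more careful — the correct statement is that when $q\le 2\le p^*$ the target $n^{1/(p^*\wedge q)}=n^{1/q}$ and the first summand $n^{1/q}\|X_{1,1}\|_{q\wedge\Log n}=n^{1/q}\|X_{1,1}\|_q\sim_\alpha n^{1/q}\|X_{1,1}\|_2$ already is the answer (up to $\alpha$, using regularity to pass between $\|\cdot\|_q$ and $\|\cdot\|_2$ for $1\le q\le 2$), and one only needs the second summand $n^{1/p^*}\|X_{1,1}\|_{p^*\wedge\Log n}$ to be $\lesssim_\alpha n^{1/q}\|X_{1,1}\|_{q\wedge\Log n}$, which follows because $n^{1/p^*}\le n^{1/q}$ while $\|X_{1,1}\|_{p^*\wedge\Log n}$ exceeds $\|X_{1,1}\|_{q\wedge\Log n}$ by at most a factor $\alpha^{\log_2(\Log n / q)}$ — and \emph{this} is the place where the genuinely delicate point lies, since that factor is not bounded; the resolution is that $n^{1/p^*}$ beats $n^{1/q}$ by a polynomial factor $n^{1/q-1/p^*}$ that dominates the at-most-$\Log n$-to-a-power loss in moments precisely because $\|X_{1,1}\|_{\Log n}\le\alpha^{O(1)}\|X_{1,1}\|_{2}\cdot(\text{poly}\log n)$-type control is, in fact, \emph{not} what \eqref{alphareg} gives — rather \eqref{alphareg} gives $\|X_{1,1}\|_{\Log n}\le \alpha^{\log_2\Log n}\|X_{1,1}\|_1$, a quantity that is $n^{o(1)}$, hence absorbed by any fixed positive power of $n$. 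I would isolate this as a short lemma: for $\alpha$-regular $Z$ and $1\le a\le b$, $\|Z\|_b\le (b/a)^{\log_2\alpha}\|Z\|_a$; combined with $a=q\wedge\Log n\ge 1$, $b=p^*\wedge\Log n\le\Log n$ this yields a factor $(\Log n)^{\log_2\alpha}=n^{o(1)}$, which is dominated by $n^{1/q-1/p^*}$ whenever $q<p^*$ (and when $q=p^*=2$ everything is trivially comparable). Assembling the two cases and their transposed counterparts completes the proof.
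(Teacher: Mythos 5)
Your proposal has genuine gaps at the two most delicate steps. The central claim you rely on --- that $\sup_{t\in B_p^n}\|\sum_j t_jX_{1,j}\|_r\sim_\alpha\|X_{1,1}\|_r$ with constant independent of $r$ --- is not established by what you cite. First, the hypothesis you state (``$r\ge p^*$'') is reversed: the correct condition for the upper bound is $p^*\ge r\vee 2$, since for $t=n^{-1/p}(1,\ldots,1)\in B_p^n$ one has $\|\sum_j t_jX_j\|_r\gtrsim n^{1/p^*-1/2}\|X_{1,1}\|_2$, which blows up when $p^*<2$. Second, and more importantly, the Rosenthal inequality carries a constant of order $r/\ln r$, and the moment comparison \eqref{eq:compmomregalpha} carries $(r/2)^{\log_2\alpha}$; both are unbounded as $r$ ranges up to $\Log n$, so neither yields a constant depending only on $\alpha$. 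Eliminating that constant is exactly where the paper's proof of this step (inequality \eqref{eq:est1rem}) requires a real idea: for $q\ge 4$ the sum is split at $j=e^{4q}$, the head is handled by the triangle inequality (using $p^*\ge q$ to give $e^{4q/p^*}\le e^4$), and in the tail the constraint $|t_j|\le j^{-1/p}\le e^{-4q/p}$ is used to produce an exponentially small factor $e^{-q}$ that absorbs the Rosenthal constant.

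The second gap is in the comparison of the two summands. You replace $\sup_{s\in B_{q^*}^n}\|\sum_i s_iX_{i,1}\|_{p^*\wedge\Log n}$ by $\|X_{1,1}\|_{p^*\wedge\Log n}$, which is unjustified: when $q<2$ this fails outright, since then $q^*>2$ and $\sup_{s\in B_{q^*}^n}\|s\|_2=n^{1/q-1/2}>1$, so spread-out $s$ give a genuinely larger supremum; and when $2\le q<p^*\wedge\Log n$ the (corrected) lemma's hypothesis fails for the second summand --- including already the diagonal case $q=p^*$, where the second summand is the unproved lemma itself with $r$ as large as $\Log n$. This is precisely why the paper's proof of \eqref{eq:est2rem} iterates through intermediate exponents $q_k=2p^*/\rho_k$ with $\rho_k=32\beta^2\Log^{(k)}(p^*)$: after the natural rescaling $B_{q^*}^n\subset n^{1/q-1/p^*}B_p^n$, the $n$-powers cancel exactly, $n^{1/p^*}\cdot n^{1/q-1/p^*}=n^{1/q}$, so there is no polynomial-in-$n$ factor left to absorb the $(p^*/q)^\beta$ loss from a single application of \eqref{eq:compmomregalpha}. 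Your closing paragraph senses this tension but resolves it by weighing $n^{o(1)}$ against a power $n^{1/q-1/p^*}$ that, after the rescaling, is no longer present.
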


Moreover, if one of the parameters $p^*,q$ is not larger than $2$, then in the general rectangular case 
one of the terms from the formula in Theorem~\ref{thm:rect} can be simplified in the following way.

\begin{prop}
\label{rem:smallq}
For $\tilde{q}\in [1,2]$, $p\in[1,\infty)$ and centered iid random variables $X_i$  we have
\[
\frac{1}{2\sqrt{2}}n^{(1/p^*-1/2)_+}\|X_{1}\|_{\tilde{q}}
\leq
\sup_{t\in B_p^n}\Bigl\|\sum_{j=1}^nt_jX_{j}\Bigr\|_{\tilde{q}}
\leq 
n^{(1/p^*-1/2)_+}\|X_{1}\|_2.
\]
Similarly, for $\tilde{p}\in[1,2]$ and $q\in [1,\infty)$,
\[
\frac{1}{2\sqrt{2}}m^{(1/q-1/2)_+}\|X_{1}\|_{\tilde{p}}
\leq
\sup_{s\in B_{q^*}^{m}}\Bigl\|\sum_{i=1}^m s_iX_{i}\Bigr\|_{\tilde{p}}
\leq 
m^{(1/q-1/2)_+}\|X_{1}\|_2.
\]
In particular, if $1\le p^*,q\le 2$, and $X_{i,j}$'s are iid random variables satisfying 
$\widetilde{\alpha}^{-1}\|X_{i,j}\|_1\ge  \|X_{i,j}\|_2=1$, then 
\begin{align*}
&m^{1/q}\sup_{t\in B_p^n}\Bigl\|\sum_{j=1}^nt_jX_{1,j}\Bigr\|_{ q}
&+n^{1/p^*}\sup_{s\in B_{q^*}^m}\Bigl\|\sum_{i=1}^{m} s_iX_{i,1}\Bigr\|_{ p^*}
\sim_{\widetilde{\alpha}} m^{1/q}n^{1/p^*-1/2}+n^{1/p^*}m^{1/q-1/2}.
\end{align*}
\end{prop}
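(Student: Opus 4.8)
The plan is to prove the two displayed two-sided bounds for the quantities $\sup_{t\in B_p^n}\|\sum_j t_jX_j\|_{\tilde q}$ and $\sup_{s\in B_{q^*}^m}\|\sum_i s_iX_i\|_{\tilde p}$, and then combine them to get the ``in particular'' statement. I would do only the first display in detail; the second follows by the obvious symmetry (replace $p$ by $q^*$, $\tilde q$ by $\tilde p$, $n$ by $m$, so that $1/p^*-1/2$ becomes $1-1/q^*-1/2=1/q-1/2$). Throughout, since $\tilde q\in[1,2]$ the $L_{\tilde q}$ and $L_2$ norms of a sum of independent centered variables are comparable by Khinchin--Kahane up to an absolute constant, but the cleanest route avoids even that: the lower bound will use only $\|\cdot\|_{\tilde q}\ge\|\cdot\|_1$ paired with a Paley--Zygmund / $L_1$--$L_2$ reverse estimate, and the upper bound will use $\|\cdot\|_{\tilde q}\le\|\cdot\|_2$ together with orthogonality.

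\medskip

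\emph{Upper bound.} For any $t\in B_p^n$, monotonicity of $L_\rho$-norms gives $\|\sum_j t_jX_j\|_{\tilde q}\le\|\sum_j t_jX_j\|_2=\|X_1\|_2\,\|t\|_2$ by independence and centering. Hence $\sup_{t\in B_p^n}\|\sum_j t_jX_j\|_{\tilde q}\le\|X_1\|_2\sup_{t\in B_p^n}\|t\|_2=\|X_1\|_2\,\|B_p^n\hookrightarrow\ell_2^n\|$. The norm of the inclusion $B_p^n\hookrightarrow B_2^n$ equals $1$ when $p\le 2$ and equals $n^{1/2-1/p}=n^{1/p^*-1/2}$ when $p\ge2$; in both cases this is $n^{(1/p^*-1/2)_+}$, giving the claimed upper bound.

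\medskip

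\emph{Lower bound.} Here I would test the supremum on a single well-chosen vector. If $p\ge2$, take $t=n^{-1/p}(1,\dots,1)\in B_p^n$; then $\sum_j t_jX_j=n^{-1/p}\sum_{j=1}^n X_j$, and I would bound $\|\sum_{j=1}^n X_j\|_{\tilde q}\ge\|\sum_{j=1}^n X_j\|_1$ from below by $c\,n^{1/2}\|X_1\|_1$. The standard way to get this is the Paley--Zygmund-type inequality $\|S\|_1\ge\|S\|_2^3/\|S\|_3^2$... but to avoid the fourth-moment hypothesis I would instead use the sharper elementary fact that for a sum $S$ of independent symmetrized (or just centered) variables one has $\|S\|_1\ge\tfrac{1}{\sqrt2}\|S\|_2$ — this is exactly where the constant $2\sqrt2$ comes from: combining $\|S\|_1\ge\frac{1}{\sqrt2}\|S\|_2$ for the partial sum with $\|S\|_2=n^{1/2}\|X_1\|_2\ge n^{1/2}\|X_1\|_1$... wait, the statement lower-bounds by $\|X_1\|_{\tilde q}$ not $\|X_1\|_2$, and $\tilde q\le 2$ so $\|X_1\|_{\tilde q}\le\|X_1\|_2$; thus it suffices to produce $c\,n^{1/2}\|X_1\|_{\tilde q}$, and I can afford the crude chain $\|S\|_{\tilde q}\ge\|S\|_1\ge\frac{1}{\sqrt2}\|S\|_2=\frac{1}{\sqrt2}n^{1/2}\|X_1\|_2\ge\frac{1}{\sqrt2}n^{1/2}\|X_1\|_{\tilde q}$, which even beats $\frac{1}{2\sqrt2}$. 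If $p\le2$, take $t=e_1$, so that $\sup_{t\in B_p^n}\|\sum_j t_jX_j\|_{\tilde q}\ge\|X_1\|_{\tilde q}=n^{(1/p^*-1/2)_+}\|X_1\|_{\tilde q}$ since the exponent is $0$. So the real content is the inequality $\|S\|_1\ge\frac{1}{\sqrt2}\|S\|_2$ for $S=\sum_{j=1}^nX_j$ a sum of i.i.d.\ centered variables; I expect \textbf{this moment comparison to be the main (only) obstacle}, and the cleanest argument is via $\|S\|_2^2\le\|S\|_1^{2/3}\|S\|_4^{4/3}$ (Hölder) combined with a fourth-moment bound $\|S\|_4\le C\|S\|_2$ that holds here because $\Ex X_1=0$ and one expands $\Ex S^4$ into at most $3n^2$ pairing terms — but since Proposition~\ref{rem:smallq} is stated for \emph{arbitrary} centered i.i.d.\ $X_i$ with no regularity assumption, I would instead invoke the distribution-free bound $\Ex S^4\le 3(\Ex S^2)^2$ when the $X_i$ are additionally symmetric and, for the general centered case, first symmetrize: $\|S\|_1\ge\frac12\|\sum\varepsilon_jX_j\|_1$ by Jensen, then work with the symmetrized sum where $\Ex(\sum\varepsilon_jX_j)^4=3(\sum\Ex X_j^2)^2-2\sum\Ex X_j^4\le 3(\Ex S^2)^2$, and conclude $\|S\|_1\ge\frac12\cdot\frac{1}{\sqrt3}\|\sum\varepsilon_jX_j\|_2=\frac{1}{2\sqrt3}\|S\|_2$, which together with the symmetrization loss and $\|X_1\|_{\tilde q}\le\|X_1\|_2$ yields the stated constant $\frac{1}{2\sqrt2}$ after a slightly more careful accounting.

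\medskip

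\emph{The ``in particular'' statement.} Assuming $1\le p^*,q\le2$ and $\widetilde\alpha^{-1}\|X_{i,j}\|_1\ge\|X_{i,j}\|_2=1$ (so in particular $\|X_{1,1}\|_1\ge 1/\widetilde\alpha$ and, by monotonicity, $\|X_{1,1}\|_1\le\|X_{1,1}\|_2=1$, hence $\|X_{1,1}\|_1\sim_{\widetilde\alpha}1$), I apply the first display with $\tilde q:=q\le2$ and the second with $\tilde p:=p^*\le2$. The first gives $\sup_{t\in B_p^n}\|\sum_j t_jX_{1,j}\|_q\sim_{\widetilde\alpha}n^{(1/p^*-1/2)_+}$, and since $p^*\le2$ means $p\ge2$ so $(1/p^*-1/2)_+=1/p^*-1/2$, this is $\sim_{\widetilde\alpha}n^{1/p^*-1/2}$; multiplying by $m^{1/q}$ yields the first term $m^{1/q}n^{1/p^*-1/2}$. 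Symmetrically the second display gives $n^{1/p^*}\sup_{s\in B_{q^*}^m}\|\sum_i s_iX_{i,1}\|_{p^*}\sim_{\widetilde\alpha}n^{1/p^*}m^{1/q-1/2}$, using $q\le2$ so $(1/q-1/2)_+=1/q-1/2$. Adding the two terms completes the proof. I should double-check one edge case: when $q^*\ge 2$ i.e.\ $q\le 2$ the exponent $(1/q-1/2)_+$ in the second bound is indeed $1/q-1/2\ge0$, and the upper/lower matching holds with the $L_2$-normalisation $\|X_{1,1}\|_2=1$ absorbing into the $\widetilde\alpha$-constant via $\|X_{1,1}\|_{\tilde q}\ge\|X_{1,1}\|_1\ge\widetilde\alpha^{-1}$; no further regularity is needed.
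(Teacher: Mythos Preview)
Your upper bound and the $p\le 2$ case of the lower bound are exactly what the paper does and are correct. The ``in particular'' paragraph is also fine once the two displays are established.

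The gap is in the $p>2$ lower bound. You reduce to proving $\|S\|_{\tilde q}\ge c\,n^{1/2}\|X_1\|_{\tilde q}$ for $S=\sum_{j=1}^n X_j$, and then try to route this through $\|S\|_1\ge c\,\|S\|_2$. That inequality is \emph{false} for general centered i.i.d.\ sums: take $X_j$ symmetric with $\Pr(X_j=\pm K)=\tfrac{1}{2K^2}$, $\Pr(X_j=0)=1-\tfrac{1}{K^2}$, so $\|X_1\|_2=1$ but $\|X_1\|_1=1/K$; for $n=1$ the ratio $\|S\|_1/\|S\|_2$ is $1/K$, and for moderate $n$ it stays small. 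Your attempted fix via a fourth-moment expansion does not help: the identity you wrote, $\Ex(\sum\varepsilon_jX_j)^4=3(\sum\Ex X_j^2)^2-2\sum\Ex X_j^4$, is incorrect (the correct value is $n\Ex X_1^4+3n(n-1)(\Ex X_1^2)^2$), and in any case it requires $\Ex X_1^4<\infty$, which the proposition does not assume.

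The missing idea is to avoid $\|X_1\|_2$ entirely and compare directly to $\|X_1\|_{\tilde q}$. After the symmetrization step $\|S\|_{\tilde q}\ge\tfrac12\|\sum_j\varepsilon_jX_j\|_{\tilde q}$ (which you have), condition on the $X_j$'s and apply Khintchine's inequality in the Rademacher variables:
\[
\Ex_\varepsilon\Bigl|\sum_j\varepsilon_jX_j\Bigr|^{\tilde q}\ge 2^{-\tilde q/2}\Bigl(\sum_jX_j^2\Bigr)^{\tilde q/2}.
\]
Now use that $\tilde q/2\le 1$, so $x\mapsto x^{\tilde q/2}$ is concave and hence $(\sum_jX_j^2)^{\tilde q/2}\ge n^{\tilde q/2-1}\sum_j|X_j|^{\tilde q}$. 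Taking expectations gives
\[
\Bigl\|\sum_j\varepsilon_jX_j\Bigr\|_{\tilde q}^{\tilde q}\ge 2^{-\tilde q/2}n^{\tilde q/2}\,\Ex|X_1|^{\tilde q},
\]
i.e.\ $\|\sum_j\varepsilon_jX_j\|_{\tilde q}\ge 2^{-1/2}n^{1/2}\|X_1\|_{\tilde q}$, and combined with the symmetrization factor $\tfrac12$ this yields exactly the constant $\tfrac{1}{2\sqrt2}$. This is the paper's argument; no moment beyond $\|X_1\|_{\tilde q}$ is ever used.
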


Theorem~\ref{thm:rect} and the last part of Proposition~\ref{rem:smallq} imply that under 
the regularity assumption \eqref{alphareg} the behaviour of 
$\Ex\|(X_{i,j})_{i,j=1}^n\|_{\ell_p^n\rightarrow\ell_q^n}$ in the range $1\le p^*,q\le 2$ is 
the same as in the case of iid Gaussian matrix (see \eqref{eq:formula-Gaussians}), whose entries 
have the same variance as $X_{1,1}$.

Propositions~\ref{prop:squarelargep*} and \ref{rem:smallq} yield that
in the case of square matrices the  bound from Theorem~\ref{thm:rect} may be expressed in a more explicit 
way in the whole range of $p$ and $q$:

\begin{cor}
\label{cor:square}
Let $(X_{i,j})_{i,j\leq n}$ be iid centered random variables satisfying regularity condition \eqref{alphareg} 
and let $1\leq p,q\leq \infty$.
Then
\[
\Ex\bigl\|(X_{i,j})_{i,j=1}^n\bigr\|_{\ell_p^n\rightarrow\ell_q^n}\sim_\alpha
\begin{cases}
n^{1/q+1/p^*-1/2}\|X_{1,1}\|_2,&p^*,q\leq 2,
\\
n^{1/(p^*\wedge q)}\|X_{1,1}\|_{ p^*\wedge q\wedge \Log n} ,&  p^*\vee q\geq 2.
\end{cases}
\]
\end{cor}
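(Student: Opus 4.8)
The plan is to derive Corollary~\ref{cor:square} directly from Theorem~\ref{thm:rect} by specializing $m=n$ and then simplifying the two supremum terms using Propositions~\ref{prop:squarelargep*} and~\ref{rem:smallq}. So the whole proof is essentially bookkeeping: invoke Theorem~\ref{thm:rect} to reduce $\Ex\|(X_{i,j})_{i,j=1}^n\|_{\ell_p^n\to\ell_q^n}$ to
\[
n^{1/q}\sup_{t\in B_p^n}\Bigl\|\sum_{j=1}^nt_jX_{1,j}\Bigr\|_{q\wedge\Log n}
+n^{1/p^*}\sup_{s\in B_{q^*}^n}\Bigl\|\sum_{i=1}^n s_iX_{i,1}\Bigr\|_{p^*\wedge\Log n},
\]
and then handle the two ranges of parameters separately.

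In the range $p^*\vee q\ge 2$ (with $p,q<\infty$; the cases where $p=1$ or $q=\infty$ are boundary cases that can be recovered by continuity or treated directly), I would simply quote Proposition~\ref{prop:squarelargep*}, which says precisely that the displayed sum is $\sim_\alpha n^{1/(p^*\wedge q)}\|X_{1,1}\|_{q\wedge\Log n}$. The only subtlety is that Proposition~\ref{prop:squarelargep*} states the answer with the index $q\wedge\Log n$, whereas Corollary~\ref{cor:square} writes $p^*\wedge q\wedge\Log n$; these agree because in this range $p^*\vee q\ge 2$ forces the relevant truncation, and more precisely by regularity \eqref{alphareg} the quantities $\|X_{1,1}\|_{q\wedge\Log n}$ and $\|X_{1,1}\|_{p^*\wedge q\wedge\Log n}$ differ by at most a factor depending only on $\alpha$ (doubling the exponent costs a factor $\alpha$, and $q\wedge\Log n$ and $p^*\wedge q\wedge\Log n$ are within a bounded ratio when $p^*\vee q\ge2$, so only boundedly many doublings are needed); I would spell this comparison out in one line.

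In the range $p^*,q\le 2$, I would apply Proposition~\ref{rem:smallq} to each supremum: with $\tilde q=q\le2$ (note $q\wedge\Log n=q$ here since $q\le2\le\Log n$ for $n\ge2$, and the case $n=1$ is trivial) the first supremum is $\sim n^{(1/p^*-1/2)_+}\|X_{1,1}\|_2 = n^{1/p^*-1/2}\|X_{1,1}\|_2$, using $p^*\le2$ so $(1/p^*-1/2)_+=1/p^*-1/2$; symmetrically the second supremum is $\sim n^{1/q-1/2}\|X_{1,1}\|_2$. Here I need the two-sided bound of Proposition~\ref{rem:smallq}, whose upper bound uses $\|X_1\|_2$ and whose lower bound uses $\|X_1\|_{\tilde q}$ (resp.\ $\|X_1\|_{\tilde p}$); regularity \eqref{alphareg} gives $\|X_{1,1}\|_2\le\alpha\|X_{1,1}\|_1\le\alpha\|X_{1,1}\|_{\tilde q}$, so the two are comparable up to $\alpha$ and the bound closes. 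Plugging in, the sum becomes $\sim_\alpha (n^{1/q}\cdot n^{1/p^*-1/2}+n^{1/p^*}\cdot n^{1/q-1/2})\|X_{1,1}\|_2 = 2n^{1/q+1/p^*-1/2}\|X_{1,1}\|_2 \sim n^{1/q+1/p^*-1/2}\|X_{1,1}\|_2$, which is the first case of the corollary.

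There is no real obstacle here — the content is entirely in Theorem~\ref{thm:rect} and the two propositions. The one place requiring a little care is checking that the regularity assumption \eqref{alphareg} legitimately lets one replace $\|X_{1,1}\|_2$, $\|X_{1,1}\|_{\tilde q}$, $\|X_{1,1}\|_{q\wedge\Log n}$, and $\|X_{1,1}\|_{p^*\wedge q\wedge\Log n}$ by one another up to constants depending only on $\alpha$, and in particular that the index appearing in Proposition~\ref{prop:squarelargep*} can be rewritten as the index $p^*\wedge q\wedge\Log n$ that appears in the statement of the corollary; I would state this $L_\rho$-norm comparison as a short preliminary observation (it follows by iterating \eqref{alphareg} at most a bounded number of times) and then the two cases fall out immediately.
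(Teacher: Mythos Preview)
Your approach is exactly the paper's: Corollary~\ref{cor:square} is stated there as an immediate consequence of Theorem~\ref{thm:rect} together with Propositions~\ref{prop:squarelargep*} and~\ref{rem:smallq}, and your bookkeeping for the case $p^*,q\le 2$ is correct.

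However, your reconciliation of the indices in the case $p^*\vee q\ge 2$ contains an error. You claim that $q\wedge\Log n$ and $p^*\wedge q\wedge\Log n$ are within a bounded ratio when $p^*\vee q\ge 2$; this is false. Take $p^*=2$ and $q=\Log n$: the ratio of the indices is $(\Log n)/2$, and for Gaussian entries $\|X_{1,1}\|_{\Log n}/\|X_{1,1}\|_2\sim\sqrt{\Log n}$, so the two moments are genuinely not comparable up to a constant depending only on $\alpha$. The correct observation is simpler: the formula in Proposition~\ref{prop:squarelargep*} with index $q\wedge\Log n$ is established in the proof under the additional assumption $p^*\ge q\vee 2$ (where $p^*\wedge q=q$), and the remaining case $q\ge p^*\vee 2$ is obtained by the duality $(p,q)\mapsto(q^*,p^*)$, which swaps the roles of $p^*$ and $q$ and hence produces the index $p^*\wedge\Log n$. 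In either subcase the index equals $(p^*\wedge q)\wedge\Log n$, matching the corollary exactly, so no $L_\rho$-norm comparison is needed at all.
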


The rest of this article is organized as follows.  In Section \ref{sect:prelim} we review 
properties of random variables satisfying regularity condition \eqref{alphareg}.
In Section~\ref{sect:examples} we provide   explicit functions of parameters $p^*$, $q$, $n$, $m$ 
comparable to the bounds from Theorem~\ref{thm:rect} for some special classes of distributions, 
and prove Remark~\ref{rmk:non-centered}.
In Section~\ref{sect:lower-bounds} we  establish the lower bound of Theorem~\ref{thm:rect}, 
and in Section~\ref{sect:largergeq2} we give proofs of Propositions~\ref{prop:squarelargep*} 
and \ref{rem:smallq}.
 Section~\ref{sect:upper-bounds} contains the proof of the upper bound of Theorem~\ref{thm:rect}. It is divided into 
several subsections corresponding to particular ranges of $(p,q)$, since the arguments we use in the 
proof vary depending on the range we deal with. In Subsections \ref{sect:outline} and \ref{sect:tools} 
we reveal the methods and tools, respectively, used in the most challenging parts of the proof.

%%%%%%%%%%%%%%%%%%%%%%%%%%%%%%%%%%%%%%%

%%%%%%%%%%%%%%%%%%%%%%%%%%%%%%%%%%%%%%%%%%%%%%%%%%%%%%%%%%%%%%%%%%%%%%%%

\section{Properties of $\alpha$-regular random variables}
\label{sect:prelim}

In this section we discuss crucial properties of random variables satisfying  $\alpha$-regularity condition~\eqref{alphareg}. We also show how to express this condition in terms of tails.

One of the important consequences of  $\alpha$-regularity condition \eqref{alphareg} is the 
comparison of weak and strong moments of  linear combinations of independent centered variables 
$X_{i,j}$, proven in \cite{LS}, stating that
for every $\rho\geq 1$ and every nonempty bounded $U\subset \er^{nm}$,
\begin{equation}\label{eq:comp-moments-regular}
\Bigl(\Ex\sup_{u\in U}\Bigl|\sum_{i,j}X_{i,j}u_{i,j}\Bigr|^\rho\Bigr)^{1/\rho}
\sim \Ex\sup_{u\in U}\Bigl|\sum_{i,j}X_{i,j}u_{i,j}\Bigr|
+\sup_{u\in U}\Bigl\|\sum_{i,j}X_{i,j}u_{i,j}\Bigr\|_{\rho}.
\end{equation}
Another  property of independent centered variables satisfying  \eqref{alphareg} is the following Khintchine--Kahane-type estimate,
derived in \cite[Lemma~4.1]{LS},
\begin{equation}
\label{eq:compmomregalpha}
\Bigl\|\sum_{i,j}u_{i,j}X_{i,j}\Bigr\|_{ \rho_1}\lesssim_\alpha 
\Bigl(\frac{\rho_1}{\rho_2}\Bigr)^\beta \Bigl\|\sum_{i,j}u_{i,j}X_{i,j}\Bigr\|_{\rho_2}\qquad \text{for every }  
 \rho_1\geq  \rho_2\geq 1,
\end{equation}
where $\beta:=\frac{1}{2}\vee \log_2\alpha$ and  $u$ is an arbitrary  $m\times n$ deterministic matrix. 

For  iid random variables $X_{i,j}$ we define  their log-tail function 
$N\colon [0,\infty)\to [0,\infty]$  via the formula
\begin{equation}
\label{eq:logtail}
\Pr(|X_{i,j}|\geq t)=e^{-N(t)}, \quad t\geq 0.
\end{equation}
Function $N$ is nondecreasing, but not necessary invertible. 
However, we may consider  its generalized inverse 
$N^{-1}\colon [0,\infty)\to [0,\infty)$ defined by
\[
N^{-1}(s)=\sup\{t \ge 0\colon N(t)\leq s\}.
\]

\begin{lem}
\label{lem:mom_invtail}
 Suppose that condition \eqref{alphareg} holds and $N$ is defined by \eqref{eq:logtail}.
Then for every $\rho\geq 1$,
\[
\|X_{i,j}\|_\rho\sim_\alpha N^{-1} \bigl(\rho\vee (2\ln(2\alpha))\bigr).
\]
\end{lem}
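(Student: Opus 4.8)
The plan is to reduce the statement to a study of the single nonnegative variable $Y:=|X_{1,1}|$, for which $\Pr(Y\ge t)=e^{-N(t)}$, and to compare its moments $\|Y\|_\rho=(\Ex Y^\rho)^{1/\rho}$ with $f(\rho):=N^{-1}(\rho)$. We may assume $\alpha\ge 1$ (otherwise \eqref{alphareg} forces $X_{1,1}=0$ a.s.\ and there is nothing to prove), and we abbreviate $\rho_0:=2\ln(2\alpha)\ge 2\ln 2$. Since $X_{1,1}$ is centered it is integrable, so $\|Y\|_1<\infty$; iterating \eqref{alphareg} and using that $\rho\mapsto\|Y\|_\rho$ is nondecreasing, we get $\|Y\|_\rho<\infty$ for all $\rho\ge 1$, and in particular $N(t)\to\infty$ as $t\to\infty$, so $f$ is everywhere finite.

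The first (easy) half is the lower bound $\|Y\|_\rho\ge f(\rho)/e$, valid for every $\rho\ge 1$: if $0\le t<N^{-1}(\rho)$ then $t$ fails to be an upper bound of $\{s:N(s)\le\rho\}$, hence $N(t)\le\rho$ by monotonicity of $N$, so $\Ex Y^\rho\ge t^\rho\Pr(Y\ge t)=t^\rho e^{-N(t)}\ge t^\rho e^{-\rho}$; letting $t\uparrow N^{-1}(\rho)$ gives the claim. The second (still routine) ingredient is the crude upper bound $\|Y\|_\rho\le 2f(\rho_0\rho)$ for every $\rho\ge 1$: splitting $\Ex Y^\rho=\Ex Y^\rho\mathbf 1_{\{Y\le T\}}+\Ex Y^\rho\mathbf 1_{\{Y>T\}}$ and using Cauchy--Schwarz, \eqref{alphareg}, and $\Pr(Y>T)\le e^{-N(T)}$,
\[
\Ex Y^\rho\le T^\rho+(\Ex Y^{2\rho})^{1/2}\Pr(Y>T)^{1/2}\le T^\rho+\alpha^\rho\|Y\|_\rho^\rho e^{-N(T)/2}.
\]
Choosing $T>N^{-1}(2\rho\ln\alpha+2\ln 2)$ makes $\alpha^\rho e^{-N(T)/2}<\tfrac12$, so (using $\|Y\|_\rho<\infty$) the last term is absorbed and $\|Y\|_\rho\le 2T$; letting $T$ decrease to $N^{-1}(2\rho\ln\alpha+2\ln 2)$ and noting $2\rho\ln\alpha+2\ln 2\le 2\rho\ln(2\alpha)=\rho_0\rho$ (here $\rho\ge 1$) yields $\|Y\|_\rho\le 2f(\rho_0\rho)$.

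The key step is to replace $f(\rho_0\rho)$ by $f(\rho)$ when $\rho\ge\rho_0$, i.e.\ to show $f=N^{-1}$ is doubling above $\rho_0$. Put $j:=\lceil\log_2\rho_0\rceil$, so $2\rho_0\le 2^{j+1}$. For every $\rho\ge 1$, combining the lower bound at exponent $2\rho_0\rho$, monotonicity together with \eqref{alphareg} iterated $j+1$ times, and the crude upper bound at exponent $\rho$,
\[
f(2\rho_0\rho)\le e\,\|Y\|_{2\rho_0\rho}\le e\,\|Y\|_{2^{j+1}\rho}\le e\alpha^{j+1}\|Y\|_\rho\le 2e\alpha^{j+1}f(\rho_0\rho),
\]
which reads $f(2s)\le 2e\alpha^{j+1}f(s)$ for all $s\ge\rho_0$. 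Iterating this $j$ times (the arguments $\rho,2\rho,\dots,2^{j-1}\rho$ all remain $\ge\rho_0$) gives $f(\rho_0\rho)\le f(2^j\rho)\le(2e\alpha^{j+1})^jf(\rho)$ for $\rho\ge\rho_0$, hence by the crude upper bound $\|Y\|_\rho\le 2(2e\alpha^{j+1})^jf(\rho)$, and with the lower bound this gives $\|Y\|_\rho\sim_\alpha f(\rho)=N^{-1}(\rho)$ for $\rho\ge\rho_0$. For $1\le\rho<\rho_0$ one has $\|Y\|_\rho\le\|Y\|_{\rho_0}\sim_\alpha N^{-1}(\rho_0)$ by the case just proved, while $\|Y\|_{\rho_0}\lesssim_\alpha\|Y\|_\rho$ since the exponent ratio $\rho_0/\rho$ is at most $\rho_0$ (iterate \eqref{alphareg}, or apply \eqref{eq:compmomregalpha}); as $\rho\vee\rho_0=\rho_0$, this is the assertion.

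The main obstacle is precisely the doubling step. The crude upper bound $\|Y\|_\rho\lesssim f(\rho_0\rho)$ and the lower bound $\|Y\|_\rho\gtrsim f(\rho)$ leave a gap of size $f(\rho_0\rho)/f(\rho)$ that a single use of \eqref{alphareg} cannot close; one has to feed in regularity at two nearby exponents, passing from $f(2\rho_0\rho)$ up to $\|Y\|_{2\rho_0\rho}$, then down to $\|Y\|_\rho$, then back to $f(\rho_0\rho)$, so that the effective argument of $f$ drops by a factor $2$ at the cost of an $\alpha$-dependent constant. When writing this out I would be careful that none of the chains is circular, that each moment comparison at a bounded exponent ratio costs only a constant depending on $\alpha$ (via \eqref{eq:compmomregalpha} or direct iteration of \eqref{alphareg}), and that the edge cases coming from plateaus or jumps of $N$ --- where $N^{-1}$ does not invert $N$ exactly --- are handled by using strict inequalities and passing to limits, exactly as in the two easy bounds.
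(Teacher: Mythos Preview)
Your argument is correct, but the route differs from the paper's. Both proofs share the easy lower bound $\|Y\|_\rho\ge e^{-1}N^{-1}(\rho)$ via Markov. For the hard direction $\|Y\|_\rho\lesssim_\alpha N^{-1}(\rho\vee\rho_0)$, the paper bypasses your two-stage bootstrap entirely: it applies Paley--Zygmund together with \eqref{alphareg} to obtain directly
\[
\Pr\bigl(|X_{i,j}|\ge\tfrac12\|X_{i,j}\|_\rho\bigr)\ge\tfrac14\alpha^{-2\rho}=e^{-\rho_0\rho},
\]
which reads $N^{-1}(\rho_0\rho)\ge\tfrac12\|X_{i,j}\|_\rho$. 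Substituting $\rho\mapsto 1\vee(\rho/\rho_0)$ and then using \eqref{alphareg} $\lceil\log_2\rho_0\rceil$ times to pass from $\|X_{i,j}\|_{1\vee(\rho/\rho_0)}$ back to $\|X_{i,j}\|_\rho$ finishes the proof. Your approach instead first proves the crude bound $\|Y\|_\rho\le 2N^{-1}(\rho_0\rho)$ by a truncation/Cauchy--Schwarz absorption, and then establishes the doubling $N^{-1}(2s)\lesssim_\alpha N^{-1}(s)$ for $s\ge\rho_0$ by looping moments back through tails and iterating. The paper's Paley--Zygmund step is shorter and yields cleaner constants; your method is more elementary (avoiding Paley--Zygmund) and makes the doubling property of $N^{-1}$ explicit---which is exactly what the paper records separately as condition (ii) of Proposition~\ref{prop:equiv-reg-N}.
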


\begin{proof}
To simplify the notation set $\gamma:=2\ln(2\alpha)$. Note that  $\alpha\ge 1$ and 
$\gamma> 1$.

For $t<N^{-1}(\rho\vee \gamma)$ we have by Chebyshev's inequality
\[
\|X_{i,j}\|_\rho\geq \Pr(|X_{i,j}|\geq t)^{1/\rho}t\geq e^{-1\vee(\gamma/\rho)}t
\geq e^{-\gamma}t.
\]
Hence, $N^{-1}(\rho\vee \gamma)\leq 4\alpha^2\|X_{i,j}\|_\rho$.

To derive the opposite bound, observe that the Paley-Zygmund inequality and regularity assumption \eqref{alphareg} yield that for every $\rho\geq 1$,
\[
\Pr\Bigl(|X_{i,j}|\geq \frac{1}{2}\|X_{i,j}\|_\rho\Bigr)
=\Pr(|X_{i,j}|^\rho\geq 2^{-\rho}\Ex|X_{i,j}|^\rho)
\geq (1-2^{-\rho})^2\frac{(\Ex|X_{i,j}|^\rho)^2}{\Ex|X_{i,j}|^{2\rho}}
\geq \frac{1}{4}\alpha^{-2\rho}\geq e^{-\gamma\rho}.
\]
Therefore, $N^{-1}(\gamma\rho)\geq \frac{1}{2}\|X_{i,j}\|_\rho$ for every $\rho \ge 1$, so by taking $\rho = 1\vee (\rho/\gamma)$ and applying \eqref{alphareg} multiple times we get
\[
N^{-1}(\rho\vee \gamma)\geq \frac{1}{2}\|X_{i,j}\|_{1\vee(\rho/\gamma)}
\geq \frac{1}{2}\alpha^{-\lceil \log_2\gamma\rceil}\|X_{i,j}\|_\rho
\geq \frac{1}{2}(2\gamma)^{-\log_2 \alpha}\|X_{i,j}\|_\rho. \qedhere
\]
\end{proof}

\begin{rem}\label{rmk:N-inverse-moments}
The proof above shows that 
\[
\frac{1}{e}N^{-1}(\rho)\leq \|X_{i,j}\|_\rho\leq 2(4\ln(2\alpha))^{ \log_2 \alpha} N^{-1}(\rho)
\quad \mbox{ for }\rho\geq 2\ln(2\alpha).
\]
\end{rem}

The next proposition shows how to rephrase condition \eqref{alphareg} in terms of tails of $X_{i,j}$.

\begin{prop}
\label{prop:equiv-reg-N}
Let $X$ be a random variable and  $\Pr(|X|\geq t)=e^{-N(t)}$ for $N\colon [0,\infty)\to [0,\infty]$.
Then the following conditions are equivalent\\
i) there exists $\alpha_1\in [1,\infty)$ such that $\|X\|_{2\rho}\leq \alpha_1\|X\|_\rho$ for every $\rho\geq 1$;\\
ii) there exist $\alpha_2\in [1,\infty)$, $\beta_2 \in [0,\infty)$ such that 
$N^{-1}(2s)\leq \alpha_2N^{-1}(s)$ for every
$s> \beta_2$;\\
iii) there exist $\alpha_2\in [1,\infty)$, $\beta_2 \in [0,\infty)$  such that $N(\alpha_2t)\geq 2N(t)$ for every $t>0$ satisfying $N(t)> \beta_2$.
\end{prop}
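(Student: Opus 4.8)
The plan is to prove the three equivalences by first translating everything into statements about the log-tail function $N$ and its generalized inverse, then closing a cycle $i)\Rightarrow ii)\Rightarrow iii)\Rightarrow i)$. The bridge between moments and the inverse log-tail is already available: Lemma~\ref{lem:mom_invtail} tells us that, \emph{assuming} a regularity hypothesis, $\|X\|_\rho\sim_\alpha N^{-1}(\rho\vee 2\ln(2\alpha))$. For the direction $i)\Rightarrow ii)$ this is essentially all one needs: from $\|X\|_{2\rho}\le\alpha_1\|X\|_\rho$ and the two-sided comparison with $N^{-1}$, one gets $N^{-1}(2\rho)\lesssim_{\alpha_1} N^{-1}(\rho)$ for $\rho$ above the threshold $2\ln(2\alpha_1)$, which is exactly $ii)$ with $\alpha_2$ an explicit function of $\alpha_1$ and $\beta_2=2\ln(2\alpha_1)$. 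I would be a little careful here: Lemma~\ref{lem:mom_invtail} as stated presupposes \eqref{alphareg}, so strictly I should either re-extract the one-sided estimates $\tfrac1e N^{-1}(\rho)\le\|X\|_\rho$ (which holds unconditionally, by Chebyshev, for $\rho\ge1$) and the reverse bound (which is where regularity enters, via Paley--Zygmund exactly as in the proof of the lemma), or simply invoke Remark~\ref{rmk:N-inverse-moments}. I will do the former to keep the argument self-contained and to make clear which inequality uses which hypothesis.

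Next, $ii)\Leftrightarrow iii)$ is a purely deterministic, order-theoretic fact about a nondecreasing function and its generalized inverse, and I expect this to be the part that needs the most care (though not the most ideas). The statement ``$N^{-1}(2s)\le\alpha_2 N^{-1}(s)$ for $s>\beta_2$'' and ``$N(\alpha_2 t)\ge 2N(t)$ for $t$ with $N(t)>\beta_2$'' are dual, but $N$ need not be continuous or strictly increasing, so I must work with the defining inequalities $N^{-1}(s)=\sup\{t\colon N(t)\le s\}$ and the Galois-type relations $N^{-1}(s)\ge t\iff N(t)\le s$ up to the usual $\pm$ subtleties at jumps. Concretely, to get $iii)$ from $ii)$: fix $t$ with $N(t)>\beta_2$, set $s:=N(t-)$ or work with $s$ slightly below $N(t)$; then $N^{-1}(s)\le$ something like $t$, so $N^{-1}(2s)\le\alpha_2 t$, whence $N(\alpha_2 t + )\ge 2s$, and letting the auxiliary parameters close up gives $N(\alpha_2 t)\ge 2N(t)$ possibly after replacing $\alpha_2$ by $\alpha_2+\epsilon$ or absorbing a constant — since the conclusion only needs to hold up to the choice of the constant, a harmless enlargement of $\alpha_2$ is acceptable. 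The reverse implication $iii)\Rightarrow ii)$ is symmetric. I would write this cleanly by isolating a short lemma-free paragraph handling the monotone-inverse bookkeeping once and for all.

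Finally, for $iii)\Rightarrow i)$ I would again route through the moment/inverse comparison, but now the one-sided bound I need is the reverse of before: condition $iii)$ says $N$ grows at least ``doubly'' along geometric progressions, which forces $N^{-1}$ to satisfy $N^{-1}(2s)\le\alpha_2 N^{-1}(s)$ for large $s$ (this is the $iii)\Rightarrow ii)$ step just discussed), and then combined with the unconditional lower bound $\|X\|_\rho\ge\tfrac1e N^{-1}(\rho)$ and an upper bound $\|X\|_\rho\lesssim N^{-1}(C\rho)$ — the latter obtained by the standard layer-cake / integration-by-parts estimate $\Ex|X|^{2\rho}=\int_0^\infty 2\rho t^{2\rho-1}e^{-N(t)}\,dt$, splitting at $t_0=N^{-1}(4\rho)$ and using $N(t)\ge 2\rho\ln(t/t_0)+\text{stuff}$ for $t\ge t_0$ coming from iterating $iii)$ — one deduces $\|X\|_{2\rho}\le\alpha_1\|X\|_\rho$ for $\rho\ge\beta_2$. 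For the remaining small range $1\le\rho\le\beta_2$ one uses monotonicity of $\rho\mapsto\|X\|_\rho$ and the finitely-many-scales bound $\|X\|_{2\beta_2}\le(\text{const})\|X\|_1$, adjusting $\alpha_1$ accordingly. The main obstacle, as flagged, is not any single estimate but making the $N$/$N^{-1}$ duality rigorous without assuming regularity or continuity of $N$; once that bookkeeping lemma is in place, each implication is a few lines built on Chebyshev, Paley--Zygmund, and the elementary tail-integral bound, with all constants tracked explicitly as functions of $\alpha_1$ or $(\alpha_2,\beta_2)$.
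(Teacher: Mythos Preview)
Your overall plan --- the cycle $i)\Rightarrow ii)\Rightarrow iii)\Rightarrow i)$, with $ii)\Leftrightarrow iii)$ treated as a purely deterministic fact about a monotone function and its generalized inverse --- matches the paper exactly, and your argument for $i)\Rightarrow ii)$ via Lemma~\ref{lem:mom_invtail}/Remark~\ref{rmk:N-inverse-moments} is the same as the paper's (the paper also simply declares $ii)\Leftrightarrow iii)$ ``standard'').

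Where you diverge is $iii)\Rightarrow i)$. You propose to detour through $ii)$ and a two-sided moment/$N^{-1}$ comparison: the unconditional Chebyshev lower bound $\|X\|_\rho\ge e^{-1}N^{-1}(\rho)$, plus an upper bound $\|X\|_{2\rho}\lesssim N^{-1}(C\rho)$ obtained by iterating $iii)$ to force growth of $N$ and then integrating the tail; a separate patch is then needed for $1\le\rho\le\beta_2$. This can be pushed through, but two points in your sketch need repair: iterating $iii)$ gives a \emph{power-law} lower bound $N(t)\ge N(t_0)\cdot 2^{\lfloor\log_{\alpha_2}(t/t_0)\rfloor}$, not the logarithmic growth ``$2\rho\ln(t/t_0)$'' you wrote; and the ``finitely-many-scales'' bound $\|X\|_{2\beta_2}\lesssim\|X\|_1$ is not free --- it requires essentially the same tail integration again, carried out at the fixed exponent $2\beta_2$.

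The paper's route for $iii)\Rightarrow i)$ is direct, avoids $N^{-1}$ entirely, and handles all $\rho\ge1$ at once. One writes (after the substitution $u=\alpha_2 t$)
\[
\|X\|_{2\rho}^{2\rho}=\alpha_2^{2\rho}\int_0^\infty 2\rho\,t^{2\rho-1}e^{-N(\alpha_2 t)}\,dt,
\]
splits at $t_0:=e^{\beta_2/\rho}\|X\|_\rho$ (Chebyshev forces $N(t)>\beta_2$ for $t>t_0$), and on $t>t_0$ uses $e^{-N(\alpha_2 t)}\le e^{-2N(t)}$ together with the \emph{pointwise} Chebyshev bound $t^\rho e^{-N(t)}\le\|X\|_\rho^\rho$ to factor
\[
t^{2\rho-1}e^{-2N(t)}\le \|X\|_\rho^\rho\cdot t^{\rho-1}e^{-N(t)}.
\]
Integrating yields $\|X\|_{2\rho}^{2\rho}\le\alpha_2^{2\rho}(e^{2\beta_2}+2)\|X\|_\rho^{2\rho}$. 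This single pointwise-Chebyshev trick replaces both your layer-cake iteration and your small-$\rho$ patch, and gives the explicit constant $\alpha_1=\alpha_2(e^{\beta_2}+\sqrt{2})$.
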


\begin{proof}
i)$\Rightarrow$ ii) By Lemma \ref{lem:mom_invtail} we have for $s>2\ln(2\alpha_1)$,
\[
N^{-1}(2s)\sim_{\alpha_1} \|X\|_{2s}\leq\alpha_1\|X\|_s\sim_{\alpha_1} N^{-1}(s).
\]

Equivalence of ii) and iii) is standard.

iii)$\Rightarrow$ i) Let us fix $\rho\geq 1$. We have
$\|X\|_\rho^\rho\geq t^\rho\Pr(|X|\geq t)=t^\rho e^{-N(t)}$. Thus $N(t)>\beta_2$ for 
$t>t_0:=e^{\beta_2/\rho}\|X\|_\rho$, and so
\begin{align*}
\|X\|_{2\rho}^{2\rho}
&\leq 
\alpha_2^{2\rho}\Bigl(t_0^{2\rho}+2\rho\int_{t_0}^\infty t^{2\rho-1}e^{-N(\alpha_2t)}dt\Bigr)
\leq \alpha_2^{2\rho}\Bigl(t_0^{2\rho}
+2\rho\int_{t_0}^\infty t^{\rho}e^{-N(t)}t^{\rho -1} e^{-N(t)}dt\Bigr)
\\
&\leq \alpha_2^{2\rho}\Bigl(t_0^{2\rho}+2\|X\|_\rho^\rho \rho
\int_{t_0}^\infty t^{\rho -1}e^{-N(t)}dt\Bigr)
\leq \alpha_2^{2\rho}\|X\|_\rho^{2\rho}(e^{2\beta_2}+2)
\\
&\leq (\alpha_2(e^{\beta_2}+ \sqrt{2}))^{2\rho}\|X\|_{\rho}^{2\rho}.\qedhere
\end{align*}
\end{proof}

\begin{rem}
Remark~\ref{rmk:N-inverse-moments}  and the proof above show that i) implies ii) and iii) with  constants
$\alpha_2=~2e\alpha_1(4\ln(2\alpha_1))^{ \log_2 \alpha_1}$, $\beta_2=2\ln(2\alpha_1)$, 
and conditions ii), iii) imply i) with  constants $\alpha_1=\alpha_2(e^{\beta_2}+ \sqrt{2})$.
\end{rem}

\section{Examples}	
\label{sect:examples}

In this section we focus on  two particular classes of distributions: with log-concave and log-convex tails. 
They include Rademachers, subexponential Weibulls, and heavy-tailed Weibulls. 
Our aim is to provide an explicit function of parameters $p^*$, $q$, $n$, $m$ comparable to the bounds 
from Theorem~\ref{thm:rect}; such a function in the case of iid Gaussian matrices is given in 
\eqref{eq:formula-Gaussians}. 

 Throughout this  section,  we assume that $X_{i,j}$ are iid symmetric random variables and their log-tail
function $N\colon [0,\infty)\to [0,\infty]$ is given by \eqref{eq:logtail}.

\subsection{Variables with log-concave tails} 

In this subsection we consider variables with log-concave tails, i.e., variables
with convex log-tail function $N$. 
Since $N(0)=0$ and $N$ is convex,  for every $s>t>0$ we have
\begin{equation}
\label{eq:diff-quot-N}
\frac{N(s)}s \ge \frac{N(t)}t.
\end{equation}
In particular, Proposition~\ref{prop:equiv-reg-N} yields that  a random variable with log-concave tails satisfy \eqref{alphareg} with a universal constant $\alpha$.
Hence, in the square case  Corollary~\ref{cor:square} and Lemma~\ref{lem:mom_invtail} 
imply that
\begin{align*}
\Ex\bigl\|(X_{i,j})_{i,j=1}^n\bigr\|_{\ell_p^n\rightarrow\ell_q^n},
&\sim_r 
\begin{cases}
n^{1/q+1/p^*-1/2}N^{-1}(1) &    p^*,q\leq 2,\\
n^{1/(p^*\wedge q)}N^{-1}(p^*\wedge q\wedge \Log n)&   p^*\vee q \ge 2
\end{cases} 
\\
&  \sim  N^{-1}(p^*\wedge q\wedge \Log n)n^{1/(p^*\wedge q)}n^{ (1/(p^*\vee q)-1/2)\vee 0}.
\end{align*}

In the case of log-concave tails it is  more convenient to normalize random variables  in such a way that $N^{-1}(1)=1$ rather than 
$\|X_{i,j}\|_2=1$. Observe that Lemma~\ref{lem:mom_invtail} and \eqref{eq:diff-quot-N} yield that
$\|X_{i,j}\|_2\sim N^{-1}(1)$.

\begin{lem}
\label{lem:momlogconcave}
Let $X_1,\ldots,X_n$ be iid symmetric random variables  with log-concave tails such that  $N^{-1}(1)=1$.
Then for every $p,q\geq 1$,
\[
\sup_{t\in B_p^n}\Bigr\|\sum_{i=1}^nt_iX_i\Bigl\|_q
\sim\max_{1\leq k\leq q\wedge n}k^{1/p^*}N^{-1}(q/k) 
+ (q \wedge n)^{1/(p^*\vee 2)}n^{(1/p^*-1/2)\vee 0}.
\]
\end{lem}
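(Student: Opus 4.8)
The plan is to reduce the statement to a known moment estimate for a single weighted sum $\sum_i t_iX_i$ and then to a purely deterministic optimisation over the weights $t$. First I would record the reductions. By \eqref{eq:diff-quot-N} and Proposition~\ref{prop:equiv-reg-N}, variables with log-concave tails satisfy \eqref{alphareg} with an absolute constant $\alpha$, so Lemma~\ref{lem:mom_invtail}, the normalisation $N^{-1}(1)=1$, and concavity of $N^{-1}$ (which absorbs the threshold $2\ln(2\alpha)$) give $\|X_i\|_\rho\sim N^{-1}(\rho)$ for all $\rho\ge1$; consequently every constant below is absolute. Since $\|\sum_i t_iX_i\|_q$ depends on $t$ only through the multiset $\{|t_i|\}$, I may assume $t_1\ge\dots\ge t_n\ge0$. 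Finally, concavity of $N^{-1}$ gives $N^{-1}(q/k)\le2N^{-1}(q/(k+1))$, so $k\mapsto k^{1/p^*}N^{-1}(q/k)$ changes by a bounded factor as $k$ increases by one; hence the maximum over real $k\in[1,q\wedge n]$ in the statement is comparable to the maximum over integers $k$ in that range, which is what I will bound.

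For the \emph{upper bound} I would fix a decreasing $t\in B_p^n$ and invoke the Gluskin--Kwapie\'n moment estimate for sums of independent symmetric random variables with logarithmically concave tails, which (using $\|X_i\|_\rho\sim N^{-1}(\rho)$) reads
\[
\Bigl\|\sum_{i\le n}t_iX_i\Bigr\|_q\ \lesssim\ \max_{1\le k\le q\wedge n}\Bigl(\sum_{i\le k}t_i\Bigr)N^{-1}(q/k)\ +\ \sqrt q\,\Bigl(\sum_{q<i\le n}t_i^2\Bigr)^{1/2}.
\]
It then remains to take the supremum of each summand over $t\in B_p^n$. For the first summand, H\"older's inequality gives $\sum_{i\le k}t_i\le\|t\|_p\,k^{1/p^*}\le k^{1/p^*}$, with equality at $t=k^{-1/p}\sum_{i\le k}e_i$, so its supremum is exactly $\max_{1\le k\le q\wedge n}k^{1/p^*}N^{-1}(q/k)$. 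For the second summand, a decreasing $t\in B_p^n$ satisfies $t_i\le i^{-1/p}$ (as $it_i^p\le\sum_{j\le i}t_j^p\le1$); if $p\ge2$ I would use $\sqrt q(\sum_{q<i}t_i^2)^{1/2}\le\sqrt q\,\|t\|_2\le\sqrt q\,n^{(1/2-1/p)\vee0}$, and if $p<2$ I would use $t_i^2\le t_i^p\,t_{q+1}^{2-p}\le t_i^p\,(q+1)^{1-2/p}$ to get $\sqrt q(\sum_{q<i}t_i^2)^{1/2}\lesssim q^{1/p^*}$; noting that this summand is absent unless $n>q$ (so $q\wedge n=q$), both bounds are $\lesssim(q\wedge n)^{1/(p^*\vee2)}n^{(1/p^*-1/2)\vee0}$, the second term of the assertion.

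For the \emph{lower bound} there are two pieces. For the first term I would fix an integer $k\in[1,q\wedge n]$ and take $t=k^{-1/p}\sum_{i\le k}e_i\in B_p^n$; arguing as in the proof of Lemma~\ref{lem:mom_invtail} (Paley--Zygmund together with \eqref{alphareg} and symmetry) one gets $\Pr(X_i\ge a_0 N^{-1}(q/k))\ge e^{-Cq/k}$ for absolute $a_0,C$, hence $\Pr(\forall i\le k:\,X_i\ge a_0 N^{-1}(q/k))\ge e^{-Cq}$, and on that event $\sum_{i\le k}X_i\ge a_0 kN^{-1}(q/k)$; so $\|\sum_i t_iX_i\|_q\gtrsim k^{1/p^*}N^{-1}(q/k)$, and maximising over $k$ gives the first term. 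The second term needs attention only when $p^*<2$ (if $p^*\ge2$ it equals $(q\wedge n)^{1/p^*}$, at most the $k=q\wedge n$ value above since $N^{-1}(q/(q\wedge n))\ge N^{-1}(1)=1$). In that case I would take the flat vector $t=n^{-1/p}\sum_{i\le n}e_i$ and prove $\|\sum_{i\le n}X_i\|_q\gtrsim\sqrt{n(q\wedge n)}$: for $q\ge n$ this is the estimate just proved with $k=n$; for $2\le q<n$ I would symmetrise $\sum_{i\le n}X_i\stackrel{d}{=}\sum_{i\le n}\varepsilon_i|X_i|$, fix $a$ with $\Pr(|X_i|\ge a)=\frac14$ (so $a\sim1$), and on the event $\{|J|\ge n/8\}$ with $J=\{i:|X_i|\ge a\}$ reduce, via Jensen and the contraction principle, to $\|\sum_{i\le|J|}\varepsilon_i\|_q$, for which the elementary bound $\Ex(\sum_{i\le m}\varepsilon_i)^{2l}\ge(2l-1)!!\,(m/2)^l\ge(lm/(2e))^l$ ($l\le m/2$, applied with $l=\lfloor q/2\rfloor$) gives $\|\sum_{i\le m}\varepsilon_i\|_q\gtrsim\sqrt{mq}$; the leftover range $q<2$ follows from \eqref{eq:compmomregalpha} with $(\rho_1,\rho_2)=(2,q)$, which yields $\|\sum_{i\le n}X_i\|_q\gtrsim\|\sum_{i\le n}X_i\|_2\sim\sqrt n\gtrsim\sqrt{n(q\wedge n)}$.

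The step I expect to be the main obstacle is the input moment estimate: the whole argument needs the two-sided Gluskin--Kwapie\'n formula in precisely the ``$\max_k(\sum_{i\le k}t_i)N^{-1}(q/k)$ plus Gaussian tail'' shape, and softer inputs will not do --- \eqref{eq:compmomregalpha} alone only gives $\lesssim q^{\beta}\|t\|_2$, which fails to see the $\sqrt q$ scaling of the flat part, while $\sup_t\|\sum t_iX_i\|_q\le(\Ex\|(X_i)_{i\le n}\|_{p^*}^q)^{1/q}$ obtained from \eqref{eq:comp-moments-regular} over-counts the weak mean $\Ex\|(X_i)\|_{p^*}$ (e.g.\ it is off by a factor $\sim\Log n$ for exponential entries when $p=1$). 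The second slightly delicate point is the flat-sum lower bound $\|\sum_{i\le n}X_i\|_q\gtrsim\sqrt{n(q\wedge n)}$ in the regime $q\lesssim n$, i.e.\ the fact that such a sum is at least as spread out as a Gaussian of the same variance up to order $q$; the contraction-to-Rademacher reduction above is the cleanest route I know to it.
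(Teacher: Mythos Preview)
Your overall architecture --- invoke Gluskin--Kwapie\'n, then optimise each summand over $t\in B_p^n$ --- is exactly the paper's. The lower-bound half, done by hand with explicit test vectors and a Paley--Zygmund/contraction argument, is correct and gives a nice alternative to the paper, which simply reads the lower bound off the two-sided GK formula.

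The gap is in your upper bound: the form of the Gluskin--Kwapie\'n estimate you quote,
\[
\Bigl\|\sum_i t_iX_i\Bigr\|_q\ \lesssim\ \max_{1\le k\le q\wedge n}\Bigl(\sum_{i\le k}t_i\Bigr)N^{-1}(q/k)\ +\ \sqrt q\,\Bigl(\sum_{i>q}t_i^2\Bigr)^{1/2},
\]
is \emph{false} for individual $t$. Take $N(s)=s^2$ (so $N^{-1}(x)=\sqrt x$), $q=n$, and $t_i=c\,i^{-1/2}$ with $c$ chosen so that $\|t\|_p=1$ for some fixed $p>2$ (then $c\sim_p 1$). The tail term vanishes, and $T_k=\sum_{i\le k}t_i\sim 2c\sqrt k$, so your right-hand side is $\max_k T_k\sqrt{n/k}\sim c\sqrt n$. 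But the actual GK supremum is $\sup\{\sum_i t_is_i:\sum_i s_i^2\le n\}=\sqrt n\,\|t\|_2\sim c\sqrt{n\ln n}$, larger by a factor $\sqrt{\ln n}$. So your per-$t$ bound is too small, and taking the supremum of a wrong pointwise bound does not give an upper bound for $\sup_t\|\sum t_iX_i\|_q$.

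The paper avoids this by keeping the genuine GK expression
\[
\sup\Bigl\{\sum_{i\le q\wedge n}t_i^*s_i:\ \sum_{i\le q\wedge n}N(s_i)\le q\Bigr\}
\]
intact, taking the supremum over $t\in B_p^n$ \emph{first} to obtain
\[
\sup\Bigl\{\Bigl(\sum_{i\le q\wedge n}|s_i|^{p^*}\Bigr)^{1/p^*}:\ \sum_{i\le q\wedge n}N(s_i)\le q\Bigr\},
\]
and only then proving (using convexity of $N$) that this last quantity is $\sim\max_{k}k^{1/p^*}N^{-1}(q/k)$. That order of operations is essential; your shortcut collapses the $s$-supremum to the diagonal $s_1=\dots=s_k$ too early, which is sharp after the $t$-supremum but not before. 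Your treatment of the $\sqrt q\,(\sum_{i>q}t_i^2)^{1/2}$ term is fine and matches the paper's.
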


\begin{proof}
The result of Gluskin and Kwapień \cite{GK} states that
\[
\Bigr\|\sum_{i=1}^nt_iX_i\Bigl\|_q\sim
\sup\Bigl\{\sum_{i\leq q\wedge n} t_i^*s_i\colon\ \sum_{i\leq q\wedge n}N(s_i)\leq q\Bigr\}
+\sqrt{q}\Bigr(\sum_{i>q}|t_i^*|^2\Bigl)^{1/2},
\]
where $t_1^*,\ldots, t_n^*$ is the nonincreasing rearrangement of $|t_1|,\ldots,|t_n|$.

Let us fix $t\in B_p^n$. Then for every  $q>n$, 
\[
\sum_{i\leq q}t_i^*+\sqrt{q}\Bigl(\sum_{k>q}(t_k^*)^2\Bigr)^{1/2}
=\sum_{i\leq n}t_i^*\leq n^{1-1/p}= n^{1/2-1/p} \sqrt{q\wedge n}
= (q\wedge n)^{1/p^*}.
\]
For $p\geq 2$ and $q<n$ we  have 
\begin{align*}
\sum_{i\leq q}t_i^*+\sqrt{q}\Bigl(\sum_{k>q}(t_k^*)^2\Bigr)^{1/2}
\leq q^{1-1/p}+q^{1/2}(n-q)^{1/2-1/p}\sim q^{1/2}n^{1/2-1/p}
=n^{1/2-1/p}  \sqrt{q\wedge n}.
\end{align*}
Finally, for $p\in[1, 2]$, $q<n$ we obtain
\begin{align*}
\sum_{i\leq q}t_i^*+\sqrt{q}\Bigl(\sum_{k>q}(t_k^*)^2\Bigr)^{1/2}
&\leq \sum_{i\leq q}t_i^*+\sqrt{q} (t_q^\ast)^{(2-p)/2} \Bigl(\sum_{k>q}(t_k^*)^p\Bigr)^{1/2}
\leq q^{1-1/p}+q^{1/2}(t_q^*)^{1-p/2}
\\
&\leq 2q^{1-1/p} =2 (q\wedge n)^{1/p^*}.	
\end{align*}
Estimates above might be reversed up to universal constants
if we take $t=\sum_{i=1}^n n^{-1/p}e_i$ for $p\geq 2$, and  $t=\sum_{i=1}^{q\wedge n}
 (q\wedge n)^{-1/p}e_i$ for $p\in [1,2]$.
Thus, in any case,
\[
\sup_{t\in B_p^n}\Bigl(\sum_{i\leq q\wedge n} t_i^*
+\sqrt{q}\Bigr(\sum_{i>q}|t_i^*|^2\Bigl)^{1/2}\Bigr)
\sim (q \wedge n)^{1/(p^*\vee 2)}n^{(1/p^*-1/2)\vee 0}.
\]
Moreover, since $ N^{-1}(1)=1$,
\begin{align*}
\sqrt{q}\Bigr(\sum_{i>q}|t_i^*|^2\Bigl)^{1/2} 
& \le \sum_{i\leq q\wedge n} t_i^*
+\sqrt{q}\Bigr(\sum_{i>q}|t_i^*|^2\Bigl)^{1/2}
\\ 
& \le \sup\Bigl\{\sum_{i\leq q\wedge n} t_i^*s_i\colon\ \sum_{i\leq q\wedge n}N(s_i)\leq q\Bigr\}
+\sqrt{q}\Bigr(\sum_{i>q}|t_i^*|^2\Bigl)^{1/2}.
\end{align*}

Hence, it remains to prove that
\begin{align*}
\sup_{t\in B_p^n}
\sup\Bigl\{\sum_{i\leq q\wedge n} t_i^*s_i\colon\ \sum_{i\leq q\wedge n}N(s_i)\leq q\Bigr\}
&=\sup\Bigl\{\Bigl(\sum_{i\leq q\wedge n} |s_i|^{p^*}\Bigr)^{1/p^*}\colon\ 
\sum_{i\leq q\wedge n}N(s_i)\leq q\Bigr\}
\\
&\sim \max_{1\leq k\leq q\wedge n}k^{1/p^*}N^{-1}(q/k).
\end{align*}

The lower bound is obvious since $N(N^{-1}(u))\le u$ for every $u\ge 0$. 
To show the upper estimate let
\[
a:=\max_{1\leq k\leq q\wedge n}k^{1/p^*}N^{-1}(q/k),
\]
where the maximum runs  through integers $k$ satisfying $1\leq k\leq q\wedge n$.
Then \eqref{eq:diff-quot-N} implies that
\[
\sup_{1\leq t\leq q\wedge n}t^{1/p^*}N^{-1}(q/t)\leq 2a,
\]
where the supremum runs through all $t\in \er$ satisfying $1\leq t\leq q\wedge n$.
Hence,
\[
N(s)\geq q\Bigl(\frac{s}{2a}\Bigr)^{p^*}\quad \mbox{ whenever }  2a\geq s\geq 2a (q\wedge n)^{-1/p^*}. 
\]
Therefore, condition $\sum_{i\leq q\wedge n}N(s_i)\leq q$ yields that  $s_i\le a$ and so
\[
\sum_{i\leq q\wedge n} s_i^{p^*}\leq 
(2a)^{p^*}\sum_{i\leq q\wedge n}\Bigl(\frac{1}{q\wedge n}+\frac{1}{q}N(s_i)\Bigr)
\leq 2(2a)^{p^*}\leq (4a)^{p*}.\qedhere
\]
\end{proof}

Theorem \ref{thm:rect} and Lemma \ref{lem:momlogconcave} yield the following corollary. 

\begin{cor}
\label{cor:rectlogconcave}
Let $(X_{i,j})_{i\leq m,j\leq n}$ be iid symmetric random variables with log-concave tails 
such that $N^{-1}(1)=1$. Then for every $p,q\ge 1$,
\begin{align}
\label{eq:weibull-subgaussian}
\notag
\Ex&\bigl\|(X_{i,j})_{i\leq m,j\leq n}\bigr\|_{\ell_p^n\rightarrow\ell_q^m}
\\ 
\notag
&\sim_\alpha
\begin{cases}
m^{1/q-1/2}n^{1/p^*}+n^{1/p^*-1/2}m^{1/q},&p^*,q\leq 2,
\\
n^{1/p^*}\Bigl(\sqrt{p^*\wedge m \wedge \Log n }\, m^{1/q-1/2} +
\sup\limits_{l\leq p^*\wedge \Log n\wedge m}
l^{1/q}N^{-1}\bigl(\frac{p^*\wedge\Log n}{l}\bigr)\Bigr)
+m^{1/q},&q\leq 2\leq p^*,
\\
  n^{1/p^*}  + m^{1/q}\Bigl(\sqrt{q\wedge n \wedge \Log m}\,n^{1/p^*-1/2}
 +\sup\limits_{k\leq q\wedge \Log m\wedge n}
k^{1/p^*}N^{-1}\bigl(\frac{q\wedge\Log m}k\bigr)\Bigr),
&p^*\leq 2\leq q,
\\
n^{1/p^*}\Bigl((p^*\wedge m\wedge \Log n)^{1/q}
+\sup\limits_{l\leq p^*\wedge \Log n\wedge m}l^{1/q}N^{-1}\bigl(\frac{p^*\wedge\Log n }{l}\bigr)\Bigr)
\\
\qquad +m^{1/q}\Bigl((q\wedge n\wedge \Log m)^{1/p^*}
+\sup\limits_{ k\leq q\wedge \Log m\wedge n}k^{1/p^*}N^{-1}\bigl(\frac{q\wedge\Log m }k\bigr)\Bigr) ,
&2\leq p^*,q
\end{cases}
\end{align}
\end{cor}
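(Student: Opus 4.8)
The plan is to feed the output of Lemma~\ref{lem:momlogconcave} into Theorem~\ref{thm:rect} and then simplify the resulting expression in each of the four ranges of $(p^*,q)$ separately. To begin, recall that a random variable with log-concave tails satisfies the regularity condition~\eqref{alphareg} with a universal constant (this was noted, via Proposition~\ref{prop:equiv-reg-N}, just before Lemma~\ref{lem:momlogconcave}); hence Theorem~\ref{thm:rect} applies and reduces the task to estimating
\[
m^{1/q}\sup_{t\in B_p^n}\Bigl\|\sum_{j=1}^n t_jX_{1,j}\Bigr\|_{q\wedge\Log m}
\qquad\text{and}\qquad
n^{1/p^*}\sup_{s\in B_{q^*}^m}\Bigl\|\sum_{i=1}^m s_iX_{i,1}\Bigr\|_{p^*\wedge\Log n}.
\]

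Next I would apply Lemma~\ref{lem:momlogconcave} to each supremum. For the first one this is direct: applied to the iid variables $X_{1,1},\dots,X_{1,n}$ with the exponents $p$ and $\tilde q:=q\wedge\Log m$ (note that $\tilde q\ge 1$), the lemma gives
\[
\sup_{t\in B_p^n}\Bigl\|\sum_{j=1}^n t_jX_{1,j}\Bigr\|_{\tilde q}
\sim \max_{1\le k\le \tilde q\wedge n}k^{1/p^*}N^{-1}\bigl(\tilde q/k\bigr)
+(\tilde q\wedge n)^{1/(p^*\vee 2)}\,n^{(1/p^*-1/2)\vee 0}.
\]
For the second supremum I would invoke the same lemma after the substitution $(p,n)\mapsto(q^*,m)$ and with exponent $\tilde p:=p^*\wedge\Log n$; since $(q^*)^*=q$ this yields
\[
\sup_{s\in B_{q^*}^m}\Bigl\|\sum_{i=1}^m s_iX_{i,1}\Bigr\|_{\tilde p}
\sim \max_{1\le l\le \tilde p\wedge m}l^{1/q}N^{-1}\bigl(\tilde p/l\bigr)
+(\tilde p\wedge m)^{1/(q\vee 2)}\,m^{(1/q-1/2)\vee 0}.
\]

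The remaining work is to simplify these two generic expressions in the four ranges, using only \eqref{eq:diff-quot-N} and the normalization $N^{-1}(1)=1$. Two observations do the job. First, if $q\le 2$ then $\tilde q\le 2$, so the admissible indices $k$ lie in $\{1,2\}$; since $N^{-1}$ is nondecreasing with $N^{-1}(2)\le 2N^{-1}(1)=2$, both the maximum $\max_k k^{1/p^*}N^{-1}(\tilde q/k)$ and the factor $(\tilde q\wedge n)^{1/(p^*\vee 2)}$ are of universal constant order, so the first supremum reduces to a constant times $1+n^{(1/p^*-1/2)\vee 0}$; symmetrically for the second supremum when $p^*\le 2$. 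Second, when $p^*\ge 2$ we have $(1/p^*-1/2)\vee 0=0$ and $p^*\vee 2=p^*$, so the second summand above becomes simply $(\tilde q\wedge n)^{1/p^*}$, and symmetrically for $q\ge 2$. Plugging these reductions back in, multiplying through by $m^{1/q}$ and $n^{1/p^*}$, and writing $\tilde q=q\wedge\Log m$, $\tilde p=p^*\wedge\Log n$ produces exactly the four lines of \eqref{eq:weibull-subgaussian}; in the range $p^*,q\le 2$ one additionally uses $n^{1/p^*-1/2}\ge 1$ and $m^{1/q-1/2}\ge 1$ to absorb the constant-order maxima, leaving only $m^{1/q}n^{1/p^*-1/2}+n^{1/p^*}m^{1/q-1/2}$.

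I do not anticipate a genuine obstacle — the argument is a bookkeeping exercise — but the point that needs a little care is the replacement, in the statement of the corollary, of the maxima over integers $k$ coming from Lemma~\ref{lem:momlogconcave} by suprema over real $k\in[1,\tilde q\wedge n]$ (and likewise over $l$). These two quantities are comparable up to a universal factor, by \eqref{eq:diff-quot-N}, exactly as in the proof of Lemma~\ref{lem:momlogconcave}; I would flag this explicitly so that the $\sup$ in \eqref{eq:weibull-subgaussian} is unambiguous.
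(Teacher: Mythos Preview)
Your proposal is correct and follows exactly the same approach as the paper, which simply states that the corollary is yielded by Theorem~\ref{thm:rect} and Lemma~\ref{lem:momlogconcave}. You have merely spelled out the bookkeeping details (the case splits according to whether $p^*$ and $q$ lie above or below $2$, and the absorption of constant-order terms) that the paper leaves implicit.
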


\subsubsection{Subexponential Weibull matrices}

Let $X_{i,j}$ be symmetric Weibull random variables with parameter $r$, i.e., $N(t)=t^r$. 
If $X_{i,j}$ are subexpenential, i.e. $r\geq 1$, then $N$ is convex, 
and 
$\|X_{i,j}\|_\rho = (\Gamma(1+\rho/r)^{1/\rho}) \sim \rho^{1/r}$. 
Thus, Corollary~\ref{cor:square} implies that
\begin{align*}
\Ex\bigl\|(X_{i,j})_{i,j=1}^n\bigr\|_{\ell_p^n\rightarrow\ell_q^n},
&\sim 
\begin{cases}
n^{1/q+1/p^*-1/2} &    p^*,q\leq 2,\\
(p^*\wedge q\wedge \Log n)^{1/r}  n^{1/(p^*\wedge q)},  &   p^*\vee q \leq 2
\end{cases} 
\\
&  \sim  (p^*\wedge q\wedge \Log n)^{1/r}n^{1/(p^*\wedge q)}n^{ (1/(p^*\vee q)-1/2)\vee 0}.
\end{align*}

To obtain a formula in the rectangular case we first observe  that $N^{-1}(1)=1$ and
\[
\sup_{1\leq k\leq l}k^{1/p^*}N^{-1}(q/k)
=q^{1/r}l^{(1/p^*-1/r)\vee 0}.
\]
If $r\in [1,2]$ then $1/p^*-1/r\leq 0$ for $p^*\geq 2$ and Corollary \ref{cor:rectlogconcave} allows to recover
the following bound from  \cite[Corollary 6]{LSChevet}. 
\begin{align*}
\Ex&\bigl\|(X_{i,j})_{i\leq m,j\leq n}\bigr\|_{\ell_p^n\rightarrow\ell_q^m}
\\ 
&\sim
\begin{cases}
m^{1/q-1/2}n^{1/p^*}+n^{1/p^*-1/2}m^{1/q},&p^*,q\leq 2,
\\
 (p^*\wedge \Log n)^{1/r}n^{1/p^*}m^{ (1/q-1/r)\vee 0}
+ \sqrt{p^*\wedge \Log n}\: n^{1/p^*}m^{1/q-1/2}+m^{1/q},&q\leq 2\leq p^*,
\\
 n^{1/p^*}+ (q\wedge \Log m)^{1/r}m^{1/q}n^{(1/p^*-1/r)\vee 0}
+ \sqrt{q\wedge \Log m}\: m^{1/q}n^{1/p^*-1/2},&p^*\leq 2\leq q,
\\
 (p^*\wedge \Log n)^{1/r}n^{1/p^*}+ (q\wedge\Log m)^{1/r}m^{1/q},&2\leq p^*,q
\end{cases}
\\  
&\sim
 (p^*\wedge \Log n)^{1/r} m^{(1/q-1/r)\vee 0}n^{1/p^*} +\sqrt{p^*\wedge \Log n}\: m^{(1/q-1/2)\vee 0} n^{1/p^*}
\\
& \qquad
+ (q\wedge \Log m)^{1/r} n^{(1/p^*-1/r)\vee 0}m^{1/q} +\sqrt{q\wedge \Log m}\: n^{(1/p^*-1/2)\vee 0} m^{1/q}.
\end{align*}

In the case $r>2$ Corollary \ref{cor:rectlogconcave} yields the following.

\begin{cor}
Let $(X_{i,j})_{i\leq m,j\leq n}$ be iid Weibull random variables with parameter $r\ge 2$. Then for every $p,q\ge 1$,
\begin{align*}
\notag
\Ex\bigl\|(X_{i,j})_{i\leq m,j\leq n}&\bigr\|_{\ell_p^n\rightarrow\ell_q^m}
\\ 
&\sim
\begin{cases}
m^{1/q-1/2}n^{1/p^*}+n^{1/p^*-1/2}m^{1/q},&p^*,q\leq 2,
\\
 m^{1/q-1/2}(p^*\wedge \Log n)^{1/r}(p^*\wedge\Log n\wedge m)^{1/2-1/r}n^{1/p^*}+m^{1/q},&q\leq 2\leq p^*,
\\
 n^{1/p^*}+n^{1/p^*-1/2}(q\wedge \Log m)^{1/r}(q\wedge\Log m\wedge n)^{1/2-1/r}m^{1/q},&p^*\leq 2\leq q,
\\
(p^*\wedge \Log n)^{1/r}(p^*\wedge\Log n\wedge m)^{(1/q-1/r)\vee 0}n^{1/p^*}
\\
\qquad+(q\wedge \Log m)^{1/r}(q\wedge\Log m\wedge n)^{(1/p^*-1/r)\vee 0}m^{1/q} ,&2\leq p^*,q
\end{cases}
\end{align*}
\begin{align*}   
&\sim
m^{(1/q-1/2)\vee 0}(p^*\wedge \Log n)^{1/r}(p^*\wedge\Log n\wedge m)^{(1/(q\vee 2)-1/r)\vee 0}n^{1/p^*}
\\
&\qquad
+n^{(1/p^*-1/2)\vee 0}(q\wedge \Log m)^{1/r}(q\wedge\Log m\wedge n)^{(1/(p^*\vee 2)-1/r)\vee 0}m^{1/q}.
\end{align*}
\end{cor}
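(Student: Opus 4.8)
The plan is to specialize Corollary~\ref{cor:rectlogconcave} to the log-tail function $N(t)=t^r$ with $r\ge 2$, for which $N^{-1}(s)=s^{1/r}$ and $N^{-1}(1)=1$. The only quantities in \eqref{eq:weibull-subgaussian} that are not yet in closed form are the suprema $\sup_{k\le l}k^{1/p^*}N^{-1}(q/k)$ (and its symmetric counterpart with $q,p^*,m$ replaced by $p^*,q,n$), which here become $\sup_{1\le k\le l}k^{1/p^*}(q/k)^{1/r}=q^{1/r}\sup_{1\le k\le l}k^{1/p^*-1/r}$. First I would record the elementary fact that for an exponent $\theta=1/p^*-1/r$ the quantity $\sup_{1\le k\le l}k^\theta$ equals $l^\theta$ if $\theta\ge 0$ and equals $1$ if $\theta<0$, i.e. it is $l^{\theta\vee 0}$; thus $\sup_{1\le k\le l}k^{1/p^*}N^{-1}(q/k)=q^{1/r}l^{(1/p^*-1/r)\vee 0}$, exactly the formula displayed just before the corollary. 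The symmetric statement gives $\sup_{1\le k\le l}k^{1/q}N^{-1}(p^*/k)=(p^*)^{1/r}l^{(1/q-1/r)\vee 0}$.

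Next I would substitute these closed forms into each of the four cases of \eqref{eq:weibull-subgaussian}, with $l$ taken to be $p^*\wedge\Log n\wedge m$ in the first family of suprema and $q\wedge\Log m\wedge n$ in the second. In the range $p^*,q\le 2$ the formula is already explicit and unchanged. In the range $q\le 2\le p^*$, the bracket $\sqrt{p^*\wedge m\wedge\Log n}\,m^{1/q-1/2}+\sup_{l\le p^*\wedge\Log n\wedge m}l^{1/q}N^{-1}\!\left(\frac{p^*\wedge\Log n}{l}\right)$ becomes, writing $l_0:=p^*\wedge\Log n\wedge m$, equal to $m^{1/q-1/2}\bigl(l_0^{1/2}+(p^*\wedge\Log n)^{1/r}l_0^{(1/q-1/r)\vee 0}m^{1/2-1/q}\bigr)$; since $q\le 2$ forces $1/q-1/2\ge 0$ and one checks that for $r\ge 2$ the second summand dominates the first (because $(p^*\wedge\Log n)^{1/r}l_0^{1/2-1/r}\ge l_0^{1/2}$ when $l_0\le p^*\wedge\Log n$, using $l_0^{1/2-1/r}\le (p^*\wedge\Log n)^{1/2-1/r}$), this collapses to $m^{1/q-1/2}(p^*\wedge\Log n)^{1/r}(p^*\wedge\Log n\wedge m)^{1/2-1/r}$, matching the stated second case (here $1/q-1/r=1/2-1/r+ (1/q-1/2)$ and $(1/q-1/r)\vee 0=1/q-1/r$ since $q\le 2\le r$; the factor $m^{1/q}$ is kept as the trivial lower-order additive term). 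The case $p^*\le 2\le q$ is identical after swapping $(p^*,n)\leftrightarrow(q,m)$ and $p\leftrightarrow q^*$. In the range $2\le p^*,q$ one has $(p^*\wedge m\wedge\Log n)^{1/q}+\sup_{l\le l_0}l^{1/q}N^{-1}\!\left(\frac{p^*\wedge\Log n}{l}\right)=l_0^{1/q}+(p^*\wedge\Log n)^{1/r}l_0^{(1/q-1/r)\vee 0}$, and since $r\ge 2$, one verifies that the term $(p^*\wedge\Log n)^{1/r}l_0^{(1/q-1/r)\vee 0}$ always dominates $l_0^{1/q}$ (when $q\ge r$ the exponents on $l_0$ agree and $(p^*\wedge\Log n)^{1/r}\ge l_0^{1/r}$; when $q\le r$ one has $l_0^{1/q}=l_0^{1/r}\cdot l_0^{1/q-1/r}\le (p^*\wedge\Log n)^{1/r}\cdot 1$), so the bracket is $\sim(p^*\wedge\Log n)^{1/r}(p^*\wedge\Log n\wedge m)^{(1/q-1/r)\vee 0}$; multiplying by $n^{1/p^*}$ and adding the symmetric term gives the stated fourth case.

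Finally, to obtain the single unified expression at the end, I would merge the four cases by inserting the cutoffs $(1/q-1/2)\vee 0$ and $(1/p^*-1/2)\vee 0$ on the prefactors $m$ and $n$ (which is the standard device that makes case~1 and cases~2--4 coincide: in cases where $q\ge 2$ the factor $m^{(1/q-1/2)\vee 0}=m^{1/q-1/2}$ and one absorbs the exponent $1/2-1/r$ of $p^*\wedge\Log n\wedge m$ into $(1/(q\vee 2)-1/r)\vee 0$, whereas in cases where $q\le 2$ the factor is $m^0=1$ and the exponent $(1/q-1/r)\vee 0$ equals $(1/(q\vee 2)-1/r)\vee 0$ only after noting $1/q-1/r=1/2-1/r$ is what actually appears once the $m^{1/q-1/2}$ has been split off — more precisely the identity $(1/q-1/r)\vee 0 = (1/q-1/2)\vee 0 + (1/(q\vee 2)-1/r)\vee 0$ valid for $r\ge 2$). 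Checking this bookkeeping identity in each of the four regimes, together with the observation that the additive low-order terms $m^{1/q}$ and $n^{1/p^*}$ are dominated once $p^*\vee q\ge 2$ (they only survive in case~1, which is already covered), yields the final display. The main obstacle is purely the case-by-case verification that for $r\ge 2$ the ``Weibull'' summand always dominates the ``Gaussian'' summand $\sqrt{\cdot}$ and the trivial summand, i.e. correctly identifying which of the competing powers of $p^*\wedge\Log n\wedge m$ wins; there is no analytic difficulty beyond comparing exponents, and the argument is entirely a substitution-and-simplification of Corollary~\ref{cor:rectlogconcave}.
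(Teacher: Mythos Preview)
Your approach is correct and is exactly what the paper does: it simply states that ``Corollary~\ref{cor:rectlogconcave} yields the following'' after recording the identity $\sup_{1\le k\le l}k^{1/p^*}N^{-1}(q/k)=q^{1/r}l^{(1/p^*-1/r)\vee 0}$, and leaves the case-by-case simplification to the reader. Your write-up is in fact more detailed than the paper's.

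One correction in your $q\le 2\le p^*$ case: it is \emph{not} true that the second summand $A^{1/r}l_0^{1/q-1/r}m^{1/2-1/q}$ always dominates the first summand $l_0^{1/2}$ (where $A=p^*\wedge\Log n$, $l_0=A\wedge m$); take e.g.\ $q=1$, $r=4$, $A=10$, $m=100$. The correct claim is that the \emph{maximum} of the two summands is $\sim A^{1/r}l_0^{1/2-1/r}$: both are bounded above by this quantity (the first because $l_0\le A$, the second because $m\ge l_0$ and $1/2-1/q\le 0$), and equality holds for the first when $l_0=A$ and for the second when $l_0=m$. Your parenthetical justification $A^{1/r}l_0^{1/2-1/r}\ge l_0^{1/2}$ is in fact the upper bound on the \emph{first} summand by the target, not a comparison between the two summands. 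With this fix the simplification goes through as you intended.
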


In particular, when $r=\infty$ we get the following two-sided bound for matrices with iid Rademacher entries $\ve_{i,j}$.

\begin{cor}
\label{cor:Radrect}
If $1\leq p,q\leq\infty$, then
\begin{align*}
\Ex\bigl\|(\ve_{i,j})_{i\leq m,j\leq n}\bigr\|_{\ell_p^n\rightarrow\ell_q^m} 
& \sim
\begin{cases}
m^{1/q-1/2}n^{1/p^*}+n^{1/p^*-1/2}m^{1/q},&p^*,q\leq 2,
\\
 \sqrt{p^*\wedge m \: }m^{1/q-1/2}n^{1/p^*}+m^{1/q},&q\leq 2\leq p^*,
\\
n^{1/p^*}+ \sqrt{q\wedge n\: }n^{1/p^*-1/2}m^{1/q},&p^*\leq 2\leq q,
\\
(p^*\wedge m)^{1/q}n^{1/p^*}+(q\wedge n)^{1/p^*}m^{1/q},&2\leq p^*,q.
\end{cases}
\nonumber 
\\
&\sim
(p^*\wedge m)^{1/(q\vee 2)}m^{(1/q-1/2)\vee 0}n^{1/p^*}
+(q\wedge n)^{1/(p^*\vee 2)}n^{(1/p^*-1/2)\vee 0}m^{1/q}.
\end{align*}
\end{cor}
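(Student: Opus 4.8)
The plan is to obtain Corollary~\ref{cor:Radrect} by specializing Corollary~\ref{cor:rectlogconcave} (equivalently, by evaluating the preceding corollary at the endpoint $r=\infty$). This is legitimate because the iid Rademacher variables $\ve_{i,j}$ have log-concave tails --- their log-tail function from \eqref{eq:logtail} is $N(t)=0$ for $t\le1$ and $N(t)=+\infty$ for $t>1$, which is convex with $N(0)=0$ --- with generalized inverse $N^{-1}\equiv1$; and a variable with log-concave tails satisfies \eqref{alphareg} with a universal constant (here $\|\ve_{i,j}\|_\rho=1$ for every $\rho\ge1$, so $\alpha=1$), so Corollary~\ref{cor:rectlogconcave} yields a comparison with a \emph{universal} constant. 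First I would plug $N^{-1}\equiv1$ into its four cases. Every supremum collapses: $\sup_{l\le L}l^{1/q}N^{-1}\bigl((p^*\wedge\Log n)/l\bigr)\sim L^{1/q}$ for $L:=p^*\wedge\Log n\wedge m$, and dually with $L':=q\wedge\Log m\wedge n$; moreover, when $q\le2$ one has $L^{1/q}\le L^{1/2}m^{1/q-1/2}$ since $L\le m$, so the moment-$q$ term is absorbed into the Euclidean one (and dually when $p^*\le2$). The result is an intermediate two-sided bound that matches, case by case, the target four-case formula of Corollary~\ref{cor:Radrect}, \emph{except} that every $p^*\wedge m$ is replaced by $p^*\wedge\Log n\wedge m$ and every $q\wedge n$ by $q\wedge\Log m\wedge n$.

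The main step is to erase these $\Log$ factors, losing only a universal constant. The $\gtrsim$ part of each replacement is monotonicity, so only $\lesssim$ requires work, and the one fact it uses is the elementary bound $n^{1/p^*}\le e$ whenever $p^*\ge\Log n$ (because then $p^*\ge\Log n\ge\ln n$, so $(\ln n)/p^*\le1$), together with its transpose $m^{1/q}\le e$ whenever $q\ge\Log m$. Consider the passage $p^*\wedge\Log n\wedge m\to p^*\wedge m$, relevant in the two cases $q\le2\le p^*$ and $2\le p^*,q$ (the remaining case $p^*\le2\le q$ contains only $\Log m$). If $p^*\wedge m\le\Log n$ the two quantities agree; otherwise $p^*>\Log n$, hence $n^{1/p^*}\le e$, and the affected summand of the target --- namely $\sqrt{p^*\wedge m}\,m^{1/q-1/2}n^{1/p^*}$ in the first case and $(p^*\wedge m)^{1/q}n^{1/p^*}$ in the second, both $\le m^{1/q}n^{1/p^*}$ since $p^*\wedge m\le m$ --- is then at most $e\,m^{1/q}$; and $m^{1/q}$ is itself bounded by another summand of the intermediate bound (it \emph{is} its second summand when $q\le2\le p^*$, and it is $\le(q\wedge\Log m\wedge n)^{1/p^*}m^{1/q}$ when $2\le p^*,q$), so the replacement costs only a universal factor. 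The replacement of $q\wedge\Log m\wedge n$ by $q\wedge n$ is the transpose of this, using $m^{1/q}\le e$ when $q>\Log m$. This establishes the first displayed two-sided bound of Corollary~\ref{cor:Radrect}.

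The last step, checking the single-line formula, is pure inspection of the four ranges: for $p^*,q\le2$ the prefactors $(p^*\wedge m)^{1/(q\vee2)}$ and $(q\wedge n)^{1/(p^*\vee2)}$ lie in $[1,\sqrt2]$, giving the first case; for $q\le2\le p^*$ one has $(1/p^*-1/2)\vee0=0$ and $(q\wedge n)^{1/p^*}\le2$, giving the second; the third is the transpose of the second; and for $2\le p^*,q$ every exponent of the form $(\cdot)\vee0$ vanishes, which is precisely the fourth case. I do not anticipate any genuine obstacle: the argument is the reduction to Corollary~\ref{cor:rectlogconcave} followed by bookkeeping, and the only place that needs care is the removal of the $\Log$'s, which rests entirely on $n^{1/p^*}\le e$ for $p^*\ge\Log n$ and its transpose.
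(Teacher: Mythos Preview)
Your proposal is correct and follows essentially the same route as the paper: the paper obtains Corollary~\ref{cor:Radrect} by specializing the preceding Weibull corollary (itself a consequence of Corollary~\ref{cor:rectlogconcave}) to $r=\infty$, while you go directly from Corollary~\ref{cor:rectlogconcave} with $N^{-1}\equiv 1$; in either case one lands on the same intermediate bound with $p^*\wedge\Log n\wedge m$ and $q\wedge\Log m\wedge n$ in place of $p^*\wedge m$ and $q\wedge n$. Your explicit justification of the $\Log$ removal (via $n^{1/p^*}\le e$ when $p^*\ge\Log n$ and its transpose, absorbing the affected summand into the surviving $m^{1/q}$- or $n^{1/p^*}$-term) is a detail the paper leaves implicit, so your write-up is in fact slightly more complete.
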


\begin{rem}	
In \cite[Theorem 11]{LSChevet} we provide two-sided bounds for $\Ex\|(a_ib_jX_{i,j})_{i\leq m,j\leq n}\|_{\ell_p^n\rightarrow\ell_q^m}$, where the vectors $a\in \er^m$  and $b\in \er^n$ are arbitrary, and $X_{i,j}$'s are Weibull random variables with
parameter $r\in [1,2]$. We do not know similar formulas for $r>2$. 
\end{rem}

\subsection{Variables with log-convex tails} 

In this subsection we assume that $X_{i,j}$ have log-convex tails, i.e., the function $N$
given by \eqref{eq:logtail} is concave.

\begin{lem}
\label{lem:momlogconvex}
Let $(X_{i,j})$ be iid symmetric random variables with log-convex tails and assume that 
\eqref{alphareg} holds. Then for  every $p,q\geq 1$,
\[
\sup_{t\in B_p^n}\Bigr\|\sum_{j=1}^nt_jX_{1,j}\Bigl\|_q
\sim_\alpha \|X_{i,j}\|_q 
+ \sqrt{q}\|X_{i,j}\|_2 n^{(1/p^*-1/2)\vee 0}.
\]
\end{lem}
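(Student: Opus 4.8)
The plan is to first reduce the statement to a two‑sided estimate for the $L_q$‑norm of a single weighted sum, and then to optimize over the $\ell_p^n$‑ball. Writing $a_1^*\ge a_2^*\ge\dots\ge 0$ for the non‑increasing rearrangement of $|a|$, I claim that for iid symmetric variables $X_j$ with log‑convex tails satisfying \eqref{alphareg} and every $a\in\er^n$, $q\ge 1$,
\[
\Bigl\|\sum_{j=1}^n a_jX_j\Bigr\|_q\sim_\alpha a_1^*\|X_1\|_q+\sqrt q\,\|a\|_2\|X_1\|_2 .
\]
Granting this, the lemma follows because $\sup_{a\in B_p^n}a_1^*=1$ (attained at $e_1$) and $\sup_{a\in B_p^n}\|a\|_2=n^{(1/2-1/p)\vee 0}=n^{(1/p^*-1/2)\vee 0}$ (attained at $e_1$ when $p\le 2$, at the flat vector $n^{-1/p}\mathbf 1$ when $p\ge2$), so the supremum over $B_p^n$ of the right‑hand side is comparable to $\|X_1\|_q+\sqrt q\,n^{(1/p^*-1/2)\vee 0}\|X_1\|_2$ — the $\le$ direction being trivial and the $\ge$ direction coming from testing against $e_1$ and against the flat vector.

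For the lower bound of the displayed estimate, conditioning on the coordinate that carries $a_1^*$ and applying Jensen to the remaining (independent, centered) sum gives $\|\sum_j a_jX_j\|_q\ge a_1^*\|X_1\|_q$, while the classical lower bound for even moments of sums of independent symmetric variables (expand $\Ex(\sum a_jX_j)^{2k}$ and keep the pair‑partition terms) yields $\|\sum_j a_jX_j\|_q\gtrsim\sqrt{q}\,\|a\|_2\|X_1\|_2$. Here the only role of log‑convexity is the observation — used repeatedly below — that concavity of $N$ forces $N^{-1}$ to be convex with $N^{-1}(0)=0$, hence $s\mapsto N^{-1}(s)/s$ is non‑decreasing, so by Lemma~\ref{lem:mom_invtail} $\|X_1\|_q\gtrsim_\alpha q\|X_1\|_2$ for $q\gtrsim_\alpha 1$, and together with $\|X_1\|_2\le\alpha\|X_1\|_1$ one gets $\sqrt q\,\|X_1\|_2\lesssim_\alpha\|X_1\|_q$ for all $q\ge1$; this absorbs the $\sqrt q\,a_1^*\|X_1\|_2$ part into $a_1^*\|X_1\|_q$.

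For the upper bound, normalize $\|X_1\|_2=1$, put $M:=N^{-1}(q\vee\gamma)\sim_\alpha\|X_1\|_q$ with $\gamma:=2\ln(2\alpha)$, and split $X_j=X_j'+X_j''$, $X_j':=X_j\mathbf 1_{\{|X_j|\le M\}}$, both summands being symmetric. For the bounded part I truncate coordinate‑wise at level $M_j:=a_1^*M/(a_jq)$; since $N(s)/s$ is non‑increasing and $M_j\ge M/q$ one checks $N(M_j)\le a_1^*/a_j$, and Bernstein's inequality gives $\|\sum_j a_j X_j'\|_q\lesssim\sqrt q\,\|a\|_2+q\cdot a_1^*M/q=\sqrt q\,\|a\|_2+a_1^*M$, of the required order, while for the coordinates with $a_j$ comparable to $a_1^*$ (where this truncation is ineffective) one groups them into a block of size $k\le(\|a\|_2/a_1^*)^2$ and uses the contraction principle together with the single‑block bound $\|\sum_{j\le k}X_j\|_q\lesssim_\alpha\|X_1\|_{q\vee\ln k}+\sqrt{qk}$. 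For the heavy part one bounds $\|\sum_j a_j X_j''\|_q\le a_1^*\bigl\|\sum_{j\le n}|X_j''|\bigr\|_q$, a sum of iid non‑negative variables with $\Pr(X_j''\neq 0)=e^{-N(M)}\le e^{-q}$; by sparsity it is $\lesssim_\alpha\|X_1\|_q$ once $q\gtrsim\ln n$. The complementary range $q\lesssim\ln n$ is treated separately: truncate instead at $N^{-1}(\ln(2n))$, use the regularity estimate \eqref{eq:compmomregalpha} to compare moments of order $\ln n$ with those of order $q$ up to a constant, and observe that there $\sqrt q\,n^{(1/p^*-1/2)\vee 0}\|X_1\|_2$ already dominates the contribution of the large values.

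The main difficulty is producing the single‑sum estimate without losing a factor of $q$ on the heavy part: a direct appeal to Rosenthal's inequality costs a factor $q/\ln q$, and conditioning on $(|X_j|)_j$ followed by Khintchine costs a factor $\sqrt q$ — either would be fatal, since $\|X_1\|_q$ is in general only polynomially large in $q$. Log‑convexity of the tails is exactly what makes the large deviations of $\sum_j a_jX_j$ governed by a single summand, so that the heavy part is of order $a_1^*\|X_1\|_q$ rather than $a_1^*q\|X_1\|_q$; this is the log‑convex counterpart of the Gluskin–Kwapień‑type input used in Lemma~\ref{lem:momlogconcave}, and one could alternatively deduce the displayed estimate from a sharp moment inequality for sums of independent random variables in the spirit of Latała's.
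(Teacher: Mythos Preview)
Your target is right: the single-sum estimate $\bigl\|\sum_j a_jX_j\bigr\|_q\sim_\alpha\|a\|_\infty\|X_1\|_q+\sqrt q\,\|a\|_2\|X_1\|_2$ is exactly what is needed, and the optimization over $B_p^n$ is then trivial. The paper, however, does not prove this from scratch. For $q>2$ it quotes the sharp moment inequality of Hitczenko--Montgomery-Smith--Oleszkiewicz for iid symmetric variables with log-convex tails, which gives directly $\bigl\|\sum_j t_jX_j\bigr\|_q\sim\|t\|_q\|X_1\|_q+\sqrt q\,\|t\|_2\|X_1\|_2$; a one-line H{\"o}lder interpolation $\|t\|_q\|X_1\|_q\le(\|t\|_\infty\|X_1\|_q)^{(q-2)/q}(\|t\|_2\|X_1\|_q)^{2/q}$ combined with $\|X_1\|_q\lesssim_\alpha q^\beta\|X_1\|_2$ from \eqref{eq:compmomregalpha} then replaces $\|t\|_q$ by $\|t\|_\infty$. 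The case $q\le2$ is immediate from \eqref{eq:compmomregalpha}. Your own closing remark that one could instead appeal to ``a sharp moment inequality in the spirit of Lata{\l}a's'' is precisely what the paper does.

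Your from-scratch argument has real gaps. The ``pair-partition'' lower bound is not valid as stated: the terms in $\Ex\bigl(\sum_j a_jX_j\bigr)^{2k}$ in which each index appears exactly twice sum to $\tfrac{(2k)!}{2^k}\,\sigma^{2k}e_k(a_1^2,\dots,a_n^2)$, and the elementary symmetric polynomial $e_k$ can be far smaller than $\|a\|_2^{2k}/k!$ --- it vanishes whenever $a$ has fewer than $k$ nonzero coordinates. This is why $\bigl\|\sum_ja_j\varepsilon_j\bigr\|_q\gtrsim\sqrt q\,\|a\|_2$ fails for Rademachers. The inequality does hold under log-convex tails, but it genuinely uses the heavy-tail structure, not mere positivity of moments. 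On the upper-bound side, Bernstein with the bound $|a_jX_j'|\le a_jM$ gives $\sqrt q\,\|a\|_2+q\,a_1^*M$, not $a_1^*M$; your coordinate-wise re-truncation at $M_j=a_1^*M/(a_jq)$ is designed to save this factor, but the claimed inequality $N(M_j)\le a_1^*/a_j$ follows from concavity of $N$ only when $M_j\ge M$, i.e.\ when $a_j\le a_1^*/q$, so the whole range $a_1^*/q<a_j<a_1^*$ is untreated. Finally, the ``single-block bound'' $\bigl\|\sum_{j\le k}X_j\bigr\|_q\lesssim_\alpha\|X_1\|_{q\vee\ln k}+\sqrt{qk}$ that you invoke to patch the large coordinates is itself an instance of the very estimate you are proving.
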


\begin{proof}
If $q\leq 2$, then \eqref{eq:compmomregalpha} 
yields
\begin{align*}
\sup_{t\in B_p^n}\Bigr\|\sum_{j=1}^nt_jX_{1,j}\Bigl\|_q
&\sim_\alpha \sup_{t\in B_p^n}\Bigr\|\sum_{j=1}^nt_jX_{1,j}\Bigl\|_2
=\sup_{t\in B_p^n}\|t\|_2\|X_{i,j}\|_2=n^{(1/p^*-1/2)\vee 0}\|X_{i,j}\|_2
\\
&\sim \|X_{i,j}\|_q 
+ \sqrt{q}\|X_{i,j}\|_2 n^{(1/p^*-1/2)\vee 0}.
\end{align*}

Now assume that $q>2$.
By \cite[Theorem~1.1]{HM-SO} we have
\begin{align*}
\Bigr\|\sum_{j=1}^nt_jX_{1,j}\Bigl\|_q
&\sim
\Bigl(\sum_{j=1}^{n}|t_j|^q\Ex|X_{1,j}|^q\Bigr)^{1/q}+\sqrt{q}\Bigl(\sum_{j=1}^{n}|t_j|^2\Ex|X_{1,j}|^2\Bigr)^{1/2}
\\
&=\|t\|_q\|X_{i,j}\|_q+\sqrt{q}\|t\|_2\|X_{i,j}\|_2
\gtrsim \|t\|_\infty\|X_{i,j}\|_q+\sqrt{q}\|t\|_2\|X_{i,j}\|_2.
\end{align*}
We shall show that the last estimate may be reversed up to a constant depending only on $\alpha$.
To this aim assume put $a:=\|t\|_\infty\|X_{i,j}\|_q+\sqrt{q}\|t\|_2\|X_{i,j}\|_2$. Then
\[
\|t\|_q\|X_{i,j}\|_q\leq (\|t\|_\infty\|X_{i,j}\|_q)^{(q-2)/q}(\|t\|_2\|X_{i,j}\|_q)^{2/q}
\leq a(\|X_{i,j}\|_q/\|X_{i,j}\|_2)^{2/q}\lesssim_\alpha a,
\]
where the last estimate follows by \eqref{eq:compmomregalpha}. Thus, for $q>2$,
\[
\sup_{t\in B_p^n}\Bigr\|\sum_{j=1}^nt_jX_{1,j}\Bigl\|_q
\sim_\alpha
\sup_{t\in B_p^n}(\|t\|_\infty\|X_{i,j}\|_q+\sqrt{q}\|t\|_2\|X_{i,j}\|_2)
\sim \|X_{i,j}\|_q 
+ \sqrt{q}\|X_{i,j}\|_2 n^{(1/p^*-1/2)\vee 0} \qedhere
\]
\end{proof}

\begin{rem}
\label{rem:momlogconvex-big-p*q}
Since $N$ is concave, $N^{-1}$ is convex and  $N^{-1}(0)=0$,  hence 
$N^{-1}(q)\geq \frac q2N^{-1}(2)$ whenever $q\ge 2$. 
So \eqref{alphareg} and  Lemma~\ref{lem:mom_invtail} imply that
$\|X_{i,j}\|_q\sim_\alpha N^{-1}(q)\gtrsim_\alpha q\|X_{i,j}\|_2$. 
Thus, we get by Lemma~\ref{lem:momlogconvex},
\[
\sup_{t\in B_p^n}\Bigr\|\sum_{j=1}^nt_jX_{1,j}\Bigl\|_q
\sim_\alpha \|X_{i,j}\|_q 
\quad \mbox{for } p^*, q\geq 2. 
\]
\end{rem}

Theorem~\ref{thm:rect}, Lemma~\ref{lem:momlogconvex}, and Remark~\ref{rem:momlogconvex-big-p*q} yield the following corollary. 

\begin{cor}
\label{cor:rectlogconvex}
Let $(X_{i,j})_{i\leq m,j\leq n}$ be iid symmetric random variables with log-convex tails such that \eqref{alphareg} holds. Then
\begin{align*}\notag
\Ex&\bigl\|(X_{i,j})_{i\leq m,j\leq n}\bigr\|_{\ell_p^n\rightarrow\ell_q^m}
\\ 
\notag
&\sim_\alpha
\begin{cases}
(m^{1/q-1/2}n^{1/p^*}+n^{1/p^*-1/2}m^{1/q})\|X_{i,j}\|_2,&p^*,q\leq 2,
\\
n^{1/p^*}(m^{1/q-1/2}\sqrt{p^*\wedge\Log n}\|X_{i,j}\|_2+\|X_{i,j}\|_{p^*\wedge\Log n})
+m^{1/q}\|X_{i,j}\|_2,&q\leq 2\leq p^*,
\\
 n^{1/p^*}\|X_{i,j}\|_2+m^{1/q}(n^{1/p^*-1/2}\sqrt{q\wedge\Log m}\|X_{i,j}\|_2+\|X_{i,j}\|_{q\wedge\Log m}),
 &p^*\leq 2\leq q,
\\
n^{1/p^*}\|X_{i,j}\|_{p^*\wedge\Log n}+m^{1/q}\|X_{i,j}\|_{q\wedge\Log m} ,&2\leq p^*,q
\end{cases}
\end{align*}
\end{cor}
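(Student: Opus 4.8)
The plan is to combine Theorem~\ref{thm:rect} with the pointwise formula from Lemma~\ref{lem:momlogconvex} and the simplification in Remark~\ref{rem:momlogconvex-big-p*q}, treating the four quadrants of the $(p^*,q)$-plane separately since the ``$\wedge$'' truncations collapse in different ways in each. First I would write out the general bound of Theorem~\ref{thm:rect},
\[
\Ex\bigl\|(X_{i,j})\bigr\|_{\ell_p^n\to\ell_q^m}
\sim_\alpha
m^{1/q}\sup_{t\in B_p^n}\Bigl\|\sum_j t_jX_{1,j}\Bigr\|_{q\wedge\Log m}
+n^{1/p^*}\sup_{s\in B_{q^*}^m}\Bigl\|\sum_i s_iX_{i,1}\Bigr\|_{p^*\wedge\Log n},
\]
and note that since the rows and columns of $X$ have iid log-convex tailed entries satisfying \eqref{alphareg}, Lemma~\ref{lem:momlogconvex} applies verbatim to each supremum (with $q$ replaced by $q\wedge\Log m$ in the first term and, by the symmetry of the roles of rows and columns, with $n$ replaced by $m$ and $p^*$ by... — more precisely, the second supremum is over $B_{q^*}^m$, so Lemma~\ref{lem:momlogconvex} with $n\rightsquigarrow m$, $p\rightsquigarrow q^*$, $q\rightsquigarrow p^*\wedge\Log n$ gives $\sup_{s\in B_{q^*}^m}\|\sum_i s_iX_{i,1}\|_{p^*\wedge\Log n}\sim_\alpha \|X_{i,j}\|_{p^*\wedge\Log n}+\sqrt{p^*\wedge\Log n}\,\|X_{i,j}\|_2\,m^{(1/q-1/2)\vee 0}$).

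Next I would substitute and simplify in each range. When $p^*,q\le 2$: then $q\wedge\Log m=q\le 2$ and $p^*\wedge\Log n=p^*\le 2$, so the first branch of Lemma~\ref{lem:momlogconvex} gives each supremum $\sim_\alpha n^{(1/p^*-1/2)\vee 0}\|X_{i,j}\|_2=\|X_{i,j}\|_2$ (resp.\ $m^{(1/q-1/2)\vee 0}\|X_{i,j}\|_2=\|X_{i,j}\|_2$ — here both exponents are $0$... wait, actually $1/p^*-1/2\ge 0$ since $p^*\le 2$, but $p^*\ge 1$ too; in any case $n^{(1/p^*-1/2)\vee0}=n^{1/p^*-1/2}$). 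So the bound becomes $(m^{1/q-1/2}n^{1/p^*}+n^{1/p^*-1/2}m^{1/q})\|X_{i,j}\|_2$, matching the first line. When $q\le 2\le p^*$: the first supremum still simplifies via the $q\le 2$ branch of Lemma~\ref{lem:momlogconvex} to $m^{1/q-1/2}\|X_{i,j}\|_2$ (noting $q\le 2$ so $m^{(1/q-1/2)\vee0}=m^{1/q-1/2}$), while the second supremum, with exponent $p^*\wedge\Log n\ge 2$, uses the $q>2$ branch and the observation from Remark~\ref{rem:momlogconvex-big-p*q} is not quite available here since we are summing $m$ columns and the relevant exponent is $p^*\wedge\Log n$; more care is needed — actually Lemma~\ref{lem:momlogconvex} directly gives $\|X_{i,j}\|_{p^*\wedge\Log n}+\sqrt{p^*\wedge\Log n}\,\|X_{i,j}\|_2\,m^{(1/q-1/2)\vee0}$, and since $q\le 2$ the last factor is $m^{1/q-1/2}$, producing exactly $n^{1/p^*}(\|X_{i,j}\|_{p^*\wedge\Log n}+\sqrt{p^*\wedge\Log n}\,m^{1/q-1/2}\|X_{i,j}\|_2)+m^{1/q}\|X_{i,j}\|_2$, matching the second line (the stray $m^{1/q}\|X_{i,j}\|_2$ coming from the first supremum's ``$+\sqrt q\|X\|_2 n^{(1/p^*-1/2)\vee0}$'' term with $q\wedge\Log m$ in place of $q$, which is $\lesssim_\alpha m^{1/q}\|X_{i,j}\|_2$ since $n^{(1/p^*-1/2)\vee0}=1$ when $p^*\ge 2$... hmm, $p^*\ge 2$ means $1/p^*-1/2\le0$ so that exponent is $0$, good). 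The case $p^*\le 2\le q$ is symmetric. Finally, when $2\le p^*,q$: Remark~\ref{rem:momlogconvex-big-p*q} applies to both suprema (since $q\wedge\Log m\ge 2$ and $p^*\wedge\Log n\ge 2$, and $q^*,p\le 2$), yielding simply $m^{1/q}\|X_{i,j}\|_{q\wedge\Log m}+n^{1/p^*}\|X_{i,j}\|_{p^*\wedge\Log n}$, the last line.

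The main obstacle — though it is bookkeeping rather than a genuine difficulty — is keeping careful track of which truncated moment index ($q$ vs.\ $q\wedge\Log m$, $p^*$ vs.\ $p^*\wedge\Log n$) sits in which supremum, and checking that the lower-order ``Gaussian'' term $\sqrt{q\wedge\Log m}\,\|X_{i,j}\|_2\,n^{(1/p^*-1/2)\vee0}$ from Lemma~\ref{lem:momlogconvex} is correctly either absorbed (when $p^*\ge 2$, where the exponent vanishes and it reduces to a term dominated by $m^{1/q}\|X_{i,j}\|_2$) or retained (when $p^*\le 2$). One should also verify that in the mixed ranges one cannot further simplify $\|X_{i,j}\|_{p^*\wedge\Log n}$ relative to $\sqrt{p^*\wedge\Log n}\,\|X_{i,j}\|_2$, which is exactly why Remark~\ref{rem:momlogconvex-big-p*q} is invoked only in the last quadrant. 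Since all inputs are already proved, the corollary follows by assembling these case computations.
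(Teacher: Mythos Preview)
Your approach is correct and is exactly the one the paper indicates: combine Theorem~\ref{thm:rect} with Lemma~\ref{lem:momlogconvex} (applied once to the row sum and once, with the roles of $n,m$ and $p,q^*$ swapped, to the column sum) and invoke Remark~\ref{rem:momlogconvex-big-p*q} in the quadrant $p^*,q\ge 2$. The write-up contains some self-corrections and a momentary slip in the $q\le 2\le p^*$ case (the first supremum involves $n^{(1/p^*-1/2)\vee 0}$, not $m^{(1/q-1/2)\vee 0}$, as you eventually note), but the underlying computation is right.
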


\subsubsection{Heavy-tailed Weibull random variables}

Weibull random variables with parameter $r\in (0,1]$ have log-convex tails. Moreover, in this case
$\|X_{i,j}\|_\rho = (\Gamma(1+\rho/r)^{1/\rho}) \sim_r \rho^{1/r}$,
so $X_{i,j}$'s satisfy \eqref{alphareg} with $\alpha\sim 2^{1/r}$ and thus
 Corollary~\ref{cor:square} implies that
\begin{align*}
\Ex\bigl\|(X_{i,j})_{i,j=1}^n\bigr\|_{\ell_p^n\rightarrow\ell_q^n},
&\sim_r 
\begin{cases}
n^{1/q+1/p^*-1/2} &    p^*,q\leq 2,\\
(p^*\wedge q\wedge \Log n)^{1/r}  n^{1/(p^*\wedge q)},  &   p^*\vee q \leq 2
\end{cases} 
\\
&  \sim  (p^*\wedge q\wedge \Log n)^{1/r}n^{1/(p^*\wedge q)}n^{ (1/(p^*\vee q)-1/2)\vee 0}.
\end{align*}

In the rectangular case Corollary \ref{cor:rectlogconvex} yields the following.

\begin{cor}
Let $(X_{i,j})_{i\leq m,j\leq n}$ be iid Weibull random variables with parameter $r\in (0,1]$.
Then for every $1\leq p,q\leq \infty$ we have
\begin{align*}
\Ex\bigl\|(X_{i,j})_{i\leq m,j\leq n} \bigr\|_{\ell_p^n\rightarrow\ell_q^m}
& \sim_r
(q\wedge \Log m)^{1/2} n^{(1/p^*-1/2)\vee 0}m^{1/q} +(q\wedge \Log m)^{1/r}m^{1/q}
\\ & \qquad+
(p^*\wedge \Log n)^{1/2} m^{(1/q-1/2)\vee 0}n^{1/p^*} +(p^*\wedge \Log m)^{1/r}n^{1/p^*}.
\end{align*}

\end{cor}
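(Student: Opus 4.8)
The plan is to apply Corollary~\ref{cor:rectlogconvex} with the explicit moment estimates available for Weibull random variables with parameter $r\in(0,1]$, and then simplify the resulting four-case expression into a single formula. First I would record the two moment facts needed: $\|X_{i,j}\|_2\sim_r 1$ (indeed $\|X_{i,j}\|_\rho\sim_r\rho^{1/r}$, as noted just above the statement, so $\|X_{i,j}\|_2$ is a constant depending only on $r$), and $\|X_{i,j}\|_{\rho}\sim_r\rho^{1/r}$ for the two relevant values $\rho = p^*\wedge\Log n$ and $\rho = q\wedge\Log m$. I would also note that $X_{i,j}$ has log-convex tails (since $N(t)=t^r$ is concave for $r\le 1$) and satisfies \eqref{alphareg} with $\alpha\sim 2^{1/r}$, so Corollary~\ref{cor:rectlogconvex} applies with constants depending only on $r$.

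Next I would substitute these values into each of the four cases of Corollary~\ref{cor:rectlogconvex}. In the regime $p^*,q\le 2$ both $\|X_{i,j}\|_2$ and the two higher-moment terms are $\sim_r 1$ and $p^*\wedge\Log n$, $q\wedge\Log m$ are themselves $\sim 1$, so everything collapses; similarly in the other three regimes one checks that plugging $\|X_{i,j}\|_{q\wedge\Log m}\sim_r(q\wedge\Log m)^{1/r}$ and $\|X_{i,j}\|_{p^*\wedge\Log n}\sim_r(p^*\wedge\Log n)^{1/r}$ turns each bracketed expression into a sum of the four building blocks appearing in the asserted formula. The key algebraic point is that the ``Gaussian-type'' term $(q\wedge\Log m)^{1/2}n^{(1/p^*-1/2)\vee 0}m^{1/q}$ and the ``heavy-tail'' term $(q\wedge\Log m)^{1/r}m^{1/q}$ (and their transposes) together dominate all case-by-case expressions: one has to verify, e.g., that in the range $2\le p^*,q$ the term $n^{1/p^*}\|X_{i,j}\|_{p^*\wedge\Log n}\sim_r(p^*\wedge\Log n)^{1/r}n^{1/p^*}$ is exactly the heavy-tail term with $m^{(1/q-1/2)\vee 0}=1$ absorbed, while the Gaussian-type term is then dominated since $r\le 1$ forces $(p^*\wedge\Log n)^{1/r}\ge(p^*\wedge\Log n)^{1/2}$.

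Finally I would carry out the short case analysis collapsing the four cases into the single displayed expression, using that for any choice of $p,q$ exactly the relevant subset of the four terms survives and the others are dominated: the factor $m^{(1/q-1/2)\vee 0}$ is $1$ when $q\le 2$ and equals $m^{1/q-1/2}$ when $q\ge 2$, which is precisely what converts the ``$q\le 2$'' bracket $m^{1/q-1/2}\sqrt{p^*\wedge\Log n}\,$ into $m^{(1/q-1/2)\vee 0}(p^*\wedge\Log n)^{1/2}$, and likewise for the other factor. The main (and only real) obstacle is bookkeeping: making sure that in each of the four regimes no term of the final formula is accidentally larger than the true value and none is too small — i.e.\ that the ``$\vee 0$'' truncations in the exponents and the ``$\wedge\Log$'' truncations in the moment orders match up correctly across all cases. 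Since $r\le 1$ everywhere makes the heavy-tail exponent $1/r$ beat the Gaussian exponent $1/2$, there is no competition between those two mechanisms and the verification is routine once the substitutions are made.
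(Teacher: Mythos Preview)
Your proposal is correct and follows exactly the paper's approach: the paper simply states that the corollary follows from Corollary~\ref{cor:rectlogconvex} together with the moment asymptotics $\|X_{i,j}\|_\rho\sim_r\rho^{1/r}$, and you have spelled out the routine case-by-case verification that the paper omits. The observation that $r\le 1$ forces $(\,\cdot\,)^{1/r}\ge(\,\cdot\,)^{1/2}$, making the heavy-tail terms absorb the Gaussian-type ones in the range $p^*,q\ge 2$, is precisely the simplification needed to collapse the four cases into the single displayed formula.
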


\subsection{Non-centered random variables} 
\label{sect:noncentered}

In this subsection we prove  \eqref{eq:noncentered-aim} under  centered regularity 
assumption~\eqref{eq:noncentered-assumpt}.  Let $\|\cdot\|$ denote the operator norm 
from $\ell_p^n$ to $\ell_q^m$. Note that
\begin{align*}
\|(\Ex X_{i,j})\|
& = |\Ex X_{1,1}|\cdot \|(1)_{i,j}\|
=  |\Ex X_{1,1}|\cdot\sup_{t\in B_p^n} 
\Bigl(\sum_{i=1}^m \Bigl| \sum_{j=1}^n t_j\Bigr|^q\Bigr)^{1/q} 
=  |\Ex X_{1,1}|\cdot m^{1/q}\sup_{t\in B_p^n} \Bigl| \sum_{j=1}^n t_j\Bigr| 
\\
&  = m^{1/q}n^{1/p^*}|\Ex X_{1,1}|.
\end{align*}
 By the triangle inequality we have
\begin{align*}
\Ex \|(X_{i,j})\|
& \le \Ex \bigl\| (X_{i,j} - \Ex X_{i,j}) \bigr\| + \|(\Ex X_{i,j})\|
=\Ex \bigl\| (X_{i,j} - \Ex X_{i,j}) \bigr\| + m^{1/q}n^{1/p^*}|\Ex X_{1,1}| ,
\end{align*}
so  Theorem~\ref{thm:rect} implies the upper bound in \eqref{eq:noncentered-aim}. Moreover, Jensen's inequality yields $\Ex\|(X_{i,j})\| \ge  \|(\Ex X_{i,j})\|$, so applying triangle inequality we get
\begin{align*}
\Ex \|(X_{i,j})\|
\ge \frac 12\Ex \|(X_{i,j})\|
+ \frac 12 \Bigl( \Ex \bigl\| (X_{i,j} - \Ex X_{i,j}) \bigr\| - \|(\Ex X_{i,j})\|\Bigr) 
 \ge \frac{1}{2} \Ex\bigl\| (X_{i,j} - \Ex X_{i,j}) \bigr\|.
\end{align*}
Hence, Theorem~\ref{thm:rect} and another application of inequality  $\Ex\|(X_{i,j})\| \ge  \|(\Ex X_{i,j})\|=m^{1/q}n^{1/p^*}|\Ex X_{1,1}| $ yield the lower bound in \eqref{eq:noncentered-aim}.

%%%%%%%%%%%%%%%%%%%%%%%%%%%%%%%%%%%%%%%%%%%%%%%%

\section{Lower bounds}	
\label{sect:lower-bounds}

In this section we shall prove the 
 lower bound in Theorem \ref{thm:rect}. 
The crucial technical result
we  use is the following lower bound  for $\ell_r$-norms of iid sequences.

\begin{lem}
\label{lem:estellrnorm}
Let $r\geq 1$ and $Y_1,Y_2,\ldots,Y_k$ be iid nonnegative random variables satisfying the
condition $\|Y_i\|_{2r}\leq \alpha\|Y_i\|_r$ for some $\alpha\in [1,\infty)$.
Assume that $k\ge 4\alpha^{2r}$.
Then
\[
\Ex\Bigl(\sum_{i=1}^k Y_i^r\Bigr)^{1/r}\geq \frac{1}{128\alpha^2} k^{1/r}\|Y_1\|_r.
\]
\end{lem}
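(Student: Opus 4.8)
The plan is to reduce the $\ell_r$-norm of the vector $(Y_1,\dots,Y_k)$ to a sum involving only a single, suitably chosen level set of the $Y_i$'s, and then to exploit independence. First I would fix a threshold: since $k\ge 4\alpha^{2r}$, the Paley--Zygmund inequality combined with the regularity $\|Y_i\|_{2r}\le\alpha\|Y_i\|_r$ (as already used in the proof of Lemma~\ref{lem:mom_invtail}) gives
\[
\Pr\Bigl(Y_i\ge\tfrac12\|Y_i\|_r\Bigr)=\Pr\Bigl(Y_i^r\ge 2^{-r}\Ex Y_i^r\Bigr)\ge(1-2^{-r})^2\frac{(\Ex Y_i^r)^2}{\Ex Y_i^{2r}}\ge\frac14\alpha^{-2r}=:p_0,
\]
so $kp_0\ge 1$. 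Let $A:=\#\{i\le k: Y_i\ge\frac12\|Y_1\|_r\}$, a binomial random variable with mean $kp_0\ge 1$. On the event $\{Y_i\ge\frac12\|Y_1\|_r\}$ we keep that coordinate; discarding the rest and using $(\sum_i Y_i^r)^{1/r}\ge(\sum_{i\in\,\cdot}Y_i^r)^{1/r}\ge\frac12\|Y_1\|_r\,A^{1/r}$, we obtain
\[
\Ex\Bigl(\sum_{i=1}^k Y_i^r\Bigr)^{1/r}\ge\frac12\|Y_1\|_r\,\Ex A^{1/r}.
\]

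The remaining task is the purely probabilistic estimate $\Ex A^{1/r}\gtrsim (kp_0)^{1/r}$ for $A\sim\mathrm{Bin}(k,p_0)$ with $kp_0\ge 1$, which would finish the proof since $(kp_0)^{1/r}=(k/4)^{1/r}\alpha^{-2}\ge\frac14 k^{1/r}\alpha^{-2}$ and the constants can be collected into $\frac1{128\alpha^2}$. For this I would use a one-sided deviation (lower-tail) bound for the binomial: for $A\sim\mathrm{Bin}(k,p_0)$ one has $\Pr(A\ge\lambda)\ge$ a universal constant (say $\ge 1/8$ or so) when $\lambda$ is a small universal fraction of $\max(1,kp_0)=kp_0$; concretely $\Pr(A\ge\frac12 kp_0)\ge c$ for an absolute $c>0$ (this follows from, e.g., a second-moment / Paley--Zygmund argument applied to $A$ itself, using $\Ex A=kp_0$ and $\operatorname{Var}A=kp_0(1-p_0)\le kp_0\le(\Ex A)^2$ since $kp_0\ge1$). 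Then
\[
\Ex A^{1/r}\ge\Bigl(\tfrac12 kp_0\Bigr)^{1/r}\Pr\Bigl(A\ge\tfrac12 kp_0\Bigr)\ge c\,\Bigl(\tfrac12 kp_0\Bigr)^{1/r}\ge c\,2^{-1/r}\,4^{-1/r}\alpha^{-2}k^{1/r},
\]
and since $r\ge1$ the factor $2^{-1/r}4^{-1/r}\ge 1/8$, so $\Ex A^{1/r}\ge\frac{c}{8}\alpha^{-2}k^{1/r}$; chaining with the previous display and choosing $c$ appropriately yields the claimed constant $\frac1{128\alpha^2}$.

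The main obstacle is the binomial lower-tail bound, i.e.\ showing $\Pr(A\ge\frac12\Ex A)\ge c$ for a universal $c$ when $\Ex A\ge 1$. The subtlety is that $\Ex A$ can be as small as $1$, so Chernoff-type concentration is not available in a useful form; instead one must argue that a constant fraction of the mass of $A$ lies above half its mean. The Paley--Zygmund inequality $\Pr(A\ge\frac12\Ex A)\ge\frac14\frac{(\Ex A)^2}{\Ex A^2}$ together with $\Ex A^2=(\Ex A)^2+\operatorname{Var}A\le 2(\Ex A)^2$ (valid precisely because $\operatorname{Var}A\le\Ex A\le(\Ex A)^2$) gives $\Pr(A\ge\frac12\Ex A)\ge\frac18$, which is exactly the universal constant needed; I would double-check that the numerical constants propagate to give $\frac{1}{128\alpha^2}$ rather than tune them too finely. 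An alternative, if one wants to avoid second moments of $A$, is to use the elementary bound $\Ex A^{1/r}\ge$ (median of $A$)$^{1/r}\cdot\frac12$ together with the fact that the median of a $\mathrm{Bin}(k,p_0)$ with mean $\ge1$ is at least a constant multiple of its mean; either route works.
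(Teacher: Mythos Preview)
Your proposal is correct and follows essentially the same route as the paper's proof: both apply Paley--Zygmund first to each $Y_i^r$ to get $\Pr(Y_i\ge\tfrac12\|Y_1\|_r)\ge\tfrac14\alpha^{-2r}$, then apply Paley--Zygmund a second time to the binomial count $A$ (the paper calls it $Z$), using $\Ex A\ge 1$ to get $\Ex A^2\le 2(\Ex A)^2$ and hence $\Pr(A\ge\tfrac12\Ex A)\ge\tfrac18$, and finally combine the two steps to obtain the constant $\tfrac{1}{128\alpha^2}$. The only cosmetic imprecision is that you write $A\sim\mathrm{Bin}(k,p_0)$ whereas in fact $A\sim\mathrm{Bin}(k,p)$ with $p\ge p_0$; this is harmless since $kp\ge kp_0\ge1$ and the second-moment bound only improves.
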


\begin{proof}
Define
\[
Z:=\sum_{i=1}^k1_{A_i},\quad A_i:=\Bigl\{Y_i^r\geq \frac{1}{2}\Ex Y_i^r\Bigr\}. 
\]
The Paley-Zygmund inequality yields
\[
\Pr(A_i)\geq \frac{1}{4}\frac{(\Ex Y_i^r)^2}{\Ex Y_i^{2r}}\geq \frac{1}{4}\alpha^{-2r}.
\]
Since $k\geq 4\alpha^{2r}$, this gives
\[
\Ex Z=\sum_{i=1}^k\Pr(A_i)\geq \frac{k}{4}\alpha^{-2r}\geq 1
\]
and
\[
\Ex Z^2=2\sum_{1\leq i<j\leq k}\Pr(A_i)\Pr(A_j)+\sum_{i=1}^k\Pr(A_i)\leq (\Ex Z)^2+\Ex Z\leq
2(\Ex Z)^2.
\]
Applying again the Paley-Zygmund inequality we obtain
\[
\Pr\Bigl(Z\geq \frac{1}{2}\Ex Z\Bigr)\geq \frac{1}{4}\frac{(\Ex Z)^2}{\Ex Z^2}\geq \frac{1}{8}.
\]
Hence, 
\[
\Ex\Bigl(\sum_{i=1}^k Y_i^r\Bigr)^{1/r}
\geq \Pr\Bigl(Z\geq \frac{1}{2}\Ex Z\Bigr)\Bigl(\frac{1}{2}\Ex Z\frac{1}{2}\Ex Y_i^r\Bigr)^{1/r}
\geq \frac{1}{8}\Bigl(\frac{k}{16}\alpha^{-2r}\Ex Y_i^r\Bigr)^{1/r}
\geq \frac{1}{128\alpha^2}k^{1/r}\|Y_i\|_r.		\qedhere
\]
\end{proof}

\begin{proof}[Proof of the lower bound in Theorem~\ref{thm:rect}]
Let us fix $t\in B_p^n$ and put $Y_i:=|\sum_{j=1}^nt_jX_{i,j}|$. Then $Y_1,\ldots,Y_m$
are iid random variables. Moreover, by \eqref{eq:compmomregalpha}, 
$\|Y_i\|_{2r}\leq \tilde{\alpha}\|Y_i\|_r$ for $r\geq 1$, where a~constant 
$\tilde{\alpha}\geq 1$ depends only on $\alpha$.

If $m\geq 4\tilde{\alpha}^{2q}$, then by Lemma \ref{lem:estellrnorm} we get
\[
\Ex\bigl\|(X_{i,j})_{i\leq m, j\leq n}\bigr\|_{\ell_p^n\rightarrow\ell_q^m}
\geq \Ex\Bigl(\sum_{i=1}^m Y_i^q\Bigr)^{1/q}\geq \frac{1}{128\tilde{\alpha}^2}m^{1/q}\|Y_i\|_q.
\]

If $m\leq 4\tilde{\alpha}^{4}$, then by \eqref{eq:compmomregalpha} we have
\[
\Ex\bigl\|(X_{i,j})_{i\leq m, j\leq n}\bigr\|_{\ell_p^n\rightarrow\ell_q^m}
\geq \|Y_i\|_1\gtrsim_\alpha \|Y_i\|_{\Log m}\sim_\alpha m^{1/q}\|Y_i\|_{ q\wedge \Log m}.
\]

If $4\tilde{\alpha}^{4}\leq m\leq 4\tilde{\alpha}^{2q}$, then
$m=4 \tilde{\alpha}^{2\tilde{q}}$ for some $1\leq \tilde{q}\leq q$.
Moreover, in this case $m^{1/q}\sim_\alpha 1\sim_\alpha m^{1/\tilde{q}}$
and $\tilde{q}\sim_\alpha  q\wedge \Log m$. Hence, Lemma \ref{lem:estellrnorm}  and  \eqref{eq:compmomregalpha} yield
\begin{align*}
\Ex\bigl\|(X_{i,j})_{i\leq m, j\leq n}\bigr\|_{\ell_p^n\rightarrow\ell_q^m}
& \geq \Ex\Bigl(\sum_{i=1}^m Y_i^q\Bigr)^{1/q}\sim_\alpha 
\Ex\Bigl(\sum_{i=1}^m Y_i^{\tilde{q}}\Bigr)^{1/\tilde{q}}
\\
& \geq  \frac{1}{128\tilde{\alpha}^2}m^{1/\tilde{q}}\|Y_i\|_{\tilde{q}}
\sim_\alpha m^{1/q}\|Y_i\|_{ q\wedge \Log m}.
\end{align*}

The argument above shows that
\[
\Ex\bigl\|(X_{i,j})_{i\leq m, j\leq n}\bigr\|_{\ell_p^n\rightarrow\ell_q^m}
\gtrsim_{\alpha}
m^{1/q}\sup_{t\in B_p^n}\Bigl\|\sum_{j=1}^nt_jX_{1,j}\Bigr\|_{ q\wedge \Log m}.
\]
The bound by the other term follows  by the following duality
\begin{equation}	\label{eq:duality}
\bigl\|(X_{i,j})_{i\leq m, j\leq n}\bigr\|_{\ell_p^n\rightarrow\ell_q^m}
=\bigl\|(X_{j,i})_{j\leq n, i\leq m}\bigr\|_{\ell_{q^*}^m\rightarrow\ell_{p^*}^n}.\qedhere
\end{equation}
\end{proof}

%%%%%%%%%%%%%%%%%%%%%%%%%%%%%%%%%%%%%%%%%%%%%%%%%%%

\section{Formula in the square case }		
\label{sect:largergeq2}

This section contains  proofs of Propositions~\ref{prop:squarelargep*} and  \ref{rem:smallq}, which  immediately yield the equivalence of formulas from Theorem~\ref{thm:rect} and Corollary~\ref{cor:square} in the square case.

\begin{proof}[Proof of Proposition~\ref{prop:squarelargep*}]
 By duality it suffices to show that for $p^*\geq q\vee 2$,
\begin{equation}	\label{eq:est-aim-rem}
n^{1/q}\sup_{t\in B_p^n}\Bigl\|\sum_{j=1}^nt_jX_{1,j}\Bigr\|_{ q\wedge \Log n}
+n^{1/p^*}\sup_{s\in B_{q^*}^n}\Bigl\|\sum_{i=1}^n s_iX_{i,1}\Bigr\|_{ p^*\wedge \Log n}
\sim_\alpha n^{1/q}\|X_{1,1}\|_{ q\wedge \Log n}.
\end{equation}

The lower bound is obvious (with constant 1).
 To derive the upper bound we observe first that if we substituted
 $q$ and $p^*$ by $ q\wedge \Log n$ and 
$ p^*\wedge \Log n$, respectively, then the RHS
of \eqref{eq:est-aim-rem} would increase, whereas the LHS would increase only by a constant factor.
So it is enough to consider the case $\Log n\geq p^*\geq  q\vee 2$.

Now we shall show that
\begin{equation}
\label{eq:est1rem}
\Bigl\|\sum_{j=1}^nt_jX_{1,j}\Bigr\|_q \lesssim_\alpha  \|X_{1,1}\|_q \quad 
\mbox{ for every }t\in B_p^n. 
\end{equation}
 To this end fix $t\in B_p^n$ and assume without loss of generality that $t_1\geq t_2\geq\cdots\geq t_n\geq 0$.
If $1\leq q\leq 4$, then by \eqref{eq:compmomregalpha} we have
\[
\Bigl\|\sum_{j=1}^nt_jX_{1,j}\Bigr\|_q\lesssim_{\alpha} \Bigl\|\sum_{j=1}^nt_jX_{1,j}\Bigr\|_2
=\|t\|_2\|X_{1,1}\|_2 \le \alpha \|X_{1,1}\|_1 \le \alpha \|X_{1,1,}\|_q.
\]
If $q\geq 4$, then
\[
\Bigl\|\sum_{j\leq e^{4q}}t_jX_{1,j}\Bigr\|_q
\leq \sum_{j\leq e^{4q}}|t_j|\|X_{1,1}\|_q
\leq e^{4q/p^*}\|t\|_p\|X_{1,1}\|_q\leq e^4\|X_{1,1}\|_q.
\]
Moreover, by Rosenthal's inequality \cite[Theorem 1.5.11]{dPG}, 
\[
\Bigl\|\sum_{j> e^{4q}}t_jX_{1,j}\Bigr\|_q
\leq 
C\frac{q}{\Log q}(\|(t_j)_{j> e^{4q}}\|_2\|X_{1,1}\|_2+\|(t_j)_{j> e^{4q}}\|_q\|X_{1,1}\|_q).
\]
If  $j>e^{4q}$, then $t_j\leq  j^{-1/p}\leq e^{- 4q/p}$,  so for $p^*\ge q\ge 4$ we have
\[
\|(t_j)_{j> e^{4q}}\|_q
\leq
\|(t_j)_{j> e^{4q}}\|_2 
\leq \|t\|_p^{p/2} \max_{j>e^{4q}} t_j^{(2-p)/2}
\leq \|t\|_p^{p/2}(e^{- 4q/p})^{1-p/2}
\leq e^{- q}
\]
and \eqref{eq:est1rem} follows.

 To conlude the proof it is enough to show that for $\Log n\geq p^*\geq  q\vee 2$,
\begin{equation}
\label{eq:est2rem}
n^{1/p^*}\sup_{s\in B_{q^*}^n}\Bigl\|\sum_{i=1}^n s_iX_{i,1}\Bigr\|_{p^*} 
\lesssim_\alpha n^{1/q}\sup_{t\in B_p^n}\Bigl\|\sum_{j=1}^nt_jX_{1,j}\Bigr\|_{ q} 
+ n^{1/q}\|X_{1,1}\|_q.
\end{equation}

For $k=0,1,\ldots$ define $\rho_k:=32\beta^2\Log^{(k)}(p^*),$ where 
 $\Log^{(k+1)}x\coloneqq \Log(\Log^{(k)}x)$, $\Log^{(0)}x\coloneqq x$, 
 and $\beta=\frac 12\vee \log_2\alpha$.
Observe that $(\rho_k)_k$ is non-increasing and for large $k$ we have $\rho_k=32\beta^2$.

If $p^*/q\leq 32\beta^2$, i.e., $p^*\leq 32\beta^2q$, then \eqref{eq:compmomregalpha} implies that 
\[
\Bigl\|\sum_{i=1}^n s_iX_{i,1}\Bigr\|_{p^*}
\lesssim_\alpha \Bigl(\frac{p^*}{q}\Bigr)^\beta\Bigl\|\sum_{i=1}^n s_iX_{i,1}\Bigr\|_{q}
\lesssim_\alpha \Bigl\|\sum_{i=1}^n s_iX_{i,1}\Bigr\|_{q}.
\] 
Moreover, $B_{q^*}^n \subset n^{1/q-1/p^*}B_p^n$, so in this case
\[
n^{1/p^*}\sup_{s\in B_{q^*}^n}\Bigl\|\sum_{i=1}^n s_iX_{i,1}\Bigr\|_{p^*}
\lesssim_\alpha n^{1/q}\sup_{t\in B_p^n}\Bigl\|\sum_{j=1}^nt_jX_{1,j}\Bigr\|_{q}
\]
and \eqref{eq:est2rem} follows.

Now suppose that $\rho_{k}< p^*/q\leq \rho_{k-1}$ for some $k\geq 1$. Define
$q_k:=2p^*/\rho_k \ge q\vee 2$.  Estimates \eqref{eq:compmomregalpha} and \eqref{eq:est1rem}, 
applied with  $p^* \coloneqq q_k$ and $q \coloneqq q_k \ge 2$,  yield
\[
\sup_{s\in B_{q_k^*}}\Bigl\|\sum_{i=1}^n s_iX_{i,1}\Bigr\|_{p^*}
\lesssim_\alpha \rho_k^\beta \sup_{s\in B_{q_k^*}}\Bigl\|\sum_{i=1}^n s_iX_{i,1}\Bigr\|_{q_k}
\lesssim_\alpha \rho_k^\beta\|X_{1,1}\|_{q_k}
\lesssim_\alpha \Bigl(\frac{\rho_k q_k}{q}\Bigr)^\beta\|X_{1,1}\|_q.
\]
Since $q_k\geq q$ we have $B_{q^*}^n\subset n^{1/q-1/q_k} B_{q_k^*}^n$.
Therefore,
\[
n^{1/p^*}\sup_{s\in B_{q^*}^n}\Bigl\|\sum_{i=1}^n s_iX_{i,1}\Bigr\|_{p^*} 
\lesssim_\alpha \Bigl(\frac{p^*}{q}\Bigr)^\beta n^{1/p^*-1/q_k} n^{1/q}\|X_{1,1}\|_q 
=  \Bigl(\frac{p^*}{q}\Bigr)^\beta n^{\frac{2-\rho_k}{2p^*}} n^{1/q}\|X_{1,1}\|_q .
\]
Hence, it is enough to show that
\begin{equation}
\label{eq:toshow}
\Bigl(\frac{p^*}{q}\Bigr)^\beta n^{\frac{2-\rho_k}{2p^*}}\leq 1.
\end{equation}
Observe that $p^*/q\geq 32\beta^2\geq 8$, so $\Log n\geq p^*\geq 8q\geq 8$, 
$\Log(p^*/q)=\ln(p^*/q)$, and $\Log n=\ln n$. Thus,
\eqref{eq:toshow} is equivalent to
\begin{equation}
\label{eq:toshow2}
\frac{\rho_k-2}{2\beta \Log(\frac{p^*}{q})}\geq \frac{p^*}{\Log n}.
\end{equation}
We have $p^*/\Log n\leq 1$ and
\[
\frac{\rho_k-2}{2\beta \Log(\frac{p^*}{q})}
\geq \frac{24\beta^2\Log^{(k)}(p^*)}{2\beta \Log\rho_{k-1}}
\geq \frac{24\beta^2-2\beta+2\beta\Log^{(k)}(p^*)}{2\beta \ln(32\beta^2) +2\beta\Log^{(k)}(p^*)}
\geq 1,
\]
where in the first inequality we used $\Log^{(k)}x\geq 1$ and $8\beta^2\geq 2$,
in the second one $\Log(ab)\leq \ln a+\Log b$ for $a\geq 1$, and in the last one
$\ln(32e\beta^2)\leq 12\beta$ for $\beta \geq 1/2$.
\end{proof}

Now we move to the proof of Proposition~\ref{rem:smallq}.  Observe that $m,n$ are arbitrary (not necessarily $m=n$).

\begin{proof}[Proof of Proposition~\ref{rem:smallq}]
It is enough to establish the first part of the assertion.
We have
\[
\sup_{t\in B_p^n}\Bigl\|\sum_{j=1}^nt_jX_{j}\Bigr\|_{\tilde{q}}
\leq \sup_{t\in B_p^n}\Bigl\|\sum_{j=1}^nt_jX_{j}\Bigr\|_{2}
=\sup_{t\in B_p^n}\|t\|_2\|X_{1}\|_2
\]
and the upper bound immediately follows.

If $p\leq 2$ then $(1/p^*-1/2)_+=0$ and the lower bound is obvious 
(with constant $1$ instead of $1/2\sqrt{2}$). Assume that $p>2$. 
Let  $(X_j')_j$ be an independent copy of $(X_j)_j$, and let $\ve_i$'s be iid Rademachers independent of other variables. 
Then		
\begin{align*}
\sup_{t\in B_p^n}\Bigl\|\sum_{j=1}^nt_jX_{j}\Bigr\|_{\tilde{q}}
&\geq n^{-1/p}\Bigl\|\sum_{j=1}^nX_{j}\Bigr\|_{\tilde{q}}
\geq \frac 12 n^{-1/p}\Bigl\|\sum_{j=1}^n(X_{j}-X_j')\Bigr\|_{\tilde{q}}
=  \frac 12 n^{-1/p}\Bigl\|\sum_{j=1}^n\ve_j(X_{j}-X_j')\Bigr\|_{\tilde{q}}
\\& \ge \frac 12 n^{-1/p}\Bigl\|\sum_{j=1}^n\ve_j(X_{j}-\Ex X_j')\Bigr\|_{\tilde{q}}
= \frac{1}{2}n^{-1/p}\Bigl\|\sum_{j=1}^n\ve_jX_{j}\Bigr\|_{\tilde{q}}.
\end{align*}
Moreover,
Khintchine's and H\"older's inequalities yield (recall that $\tilde{q}\in [1,2]$)
\[
\Ex\Bigl|\sum_{j=1}^n\ve_jX_{j}\Bigr|^{\tilde{q}}
\geq 2^{-\tilde{q}/2}\Ex\Bigl(\sum_{j=1}^nX_{j}^2\Bigr)^{\tilde{q}/2}
\geq 2^{-\tilde{q}/2}n^{\tilde{q}/2-1}\Ex\sum_{j=1}^n|X_{j}|^{\tilde{q}}
=2^{-\tilde{q}/2}n^{\tilde{q}/2}\Ex|X_{1}|^{\tilde{q}}. \qedhere
\]
\end{proof}

%%%%%%%%%%%%%%%%%%%%%%%%%%%%%
\section{Upper bounds}	
\label{sect:upper-bounds}

 To prove the upper bound in Theorem 1 we split the range $p^*,q\geq 1$ into several parts.
 In each of them we use different arguments to derive the asserted estimate.

\subsection{Case $p^*,q\leq 2$}	
\label{seq:case-leq2}

In this subsection we shall show that the two-sided bound from Theorem \ref{thm:rect}  holds in the range
$p^*,q\leq 2$ under the following mild 4th moment assumption
\begin{equation}
\label{eq:4mom}
(\Ex X_{1,1}^4)^{1/4}\leq \alpha (\Ex X_{1,1}^2)^{1/2}.
\end{equation}

Observe that then H\"older's inequality yields
\[
\Ex X_{1,1}^2\leq (\Ex X_{1,1}^4)^{1/3}(\Ex |X_{1,1}|)^{2/3}
\leq \alpha^{4/3}(\Ex X_{1,1}^2)^{2/3}(\Ex |X_{1,1}|)^{2/3},
\]
so
\begin{equation}
\label{eq:1_2mom}
\Ex|X_{1,1}|\geq \alpha^{-2}(\Ex X_{1,1}^2)^{1/2}.
\end{equation}

Let us first consider the case $p=q=2$. 
Then we shall see that it may be easily extrapolated into the whole range of $p^*,q\le 2$.

\begin{prop}
\label{prop:twosided22}
Let $(X_{i,j})_{ i\leq m ,j\leq n}$ be iid centered random variables satisfying \eqref{eq:4mom}. 
Then
\[
\Ex\bigl\|(X_{i,j})_{i\leq m,j\leq n}\bigr\|_{\ell_2^n\rightarrow\ell_2^m}
\sim_{\alpha}(\Ex X_{1,1}^2)^{1/2}(\sqrt{n}+\sqrt{m}).
\]
\end{prop}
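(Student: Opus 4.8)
The plan is to establish the two matching bounds separately. For the \emph{upper bound}, the cleanest route is symmetrization followed by Latała's bound for spectral norms of random matrices, or simply the standard $\varepsilon$-net argument. Concretely, first reduce to symmetric entries: since the $X_{i,j}$ are centered, $\Ex\|(X_{i,j})\|_{\ell_2^n\to\ell_2^m}\le 2\Ex\|(\varepsilon_{i,j}X_{i,j})\|_{\ell_2^n\to\ell_2^m}$, where $\varepsilon_{i,j}$ are iid Rademachers independent of the $X_{i,j}$. Conditionally on the $X_{i,j}$, apply the (by now classical) bound $\Ex_\varepsilon\|(\varepsilon_{i,j}a_{i,j})\|_{\ell_2^n\to\ell_2^m}\lesssim \max_i\|(a_{i,j})_j\|_2+\max_j\|(a_{i,j})_i\|_2+\sqrt{\Log(m\wedge n)}\,\max_{i,j}|a_{i,j}|$ (this follows from a net argument and is in the spirit of \cite{Bennett}; it is the Rademacher case of Seginer's result). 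Then take expectation over the $X_{i,j}$: using $\|X_{1,1}\|_{2\rho}\le\alpha\|X_{1,1}\|_\rho$ is not available here, but only \eqref{eq:4mom}, so one must instead bound $\Ex\max_i\|(X_{i,j})_j\|_2$ by crude means --- $\Ex\max_{i\le m}\sum_{j\le n}X_{i,j}^2 \le n\Ex X_{1,1}^2 + \Ex\max_{i\le m}\sum_{j}(X_{i,j}^2-\Ex X_{i,j}^2)$, and the fluctuation term is controlled using \eqref{eq:4mom} via a union bound or a direct $L_2$ estimate of the maximum; the point is only that it contributes at most $O_\alpha(n\Ex X_{1,1}^2)$ once $n\gtrsim \text{something}$, and a separate trivial argument handles small $n$. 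Altogether $\Ex\max_i\|(X_{i,j})_j\|_2\lesssim_\alpha \sqrt n\,(\Ex X_{1,1}^2)^{1/2}$, similarly $\lesssim_\alpha \sqrt m\,(\Ex X_{1,1}^2)^{1/2}$ for columns, and $\sqrt{\Log(m\wedge n)}\Ex\max_{i,j}|X_{i,j}|$ is again of lower order under \eqref{eq:4mom} (here one needs that $\Ex\max_{i,j}|X_{i,j}|\lesssim_\alpha (nm)^{1/4}(\Ex X_{1,1}^2)^{1/2}$ or so, which beats $\sqrt n+\sqrt m$ only after checking ranges --- in fact one can avoid the $\max|X_{i,j}|$ term entirely by using the sharper form of the Rademacher bound that has no logarithmic term when one is content with $\sqrt{n}+\sqrt{m}$, i.e. the bound $\Ex_\varepsilon\|\cdot\|\lesssim\max\text{(row)}+\max\text{(col)}$ is \emph{false} pointwise but the expectation version over $X$ absorbs it). I expect this bookkeeping over small/large $n,m$ to be the only real nuisance.

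For the \emph{lower bound}, $\Ex\|(X_{i,j})\|_{\ell_2^n\to\ell_2^m}\gtrsim_\alpha (\Ex X_{1,1}^2)^{1/2}\sqrt m$ follows by testing against the single unit vector $t=e_1\in B_2^n$: then $\|Xe_1\|_2 = (\sum_{i\le m}X_{i,1}^2)^{1/2}$, and $\Ex(\sum_{i\le m}X_{i,1}^2)^{1/2}\gtrsim_\alpha \sqrt m\,(\Ex X_{1,1}^2)^{1/2}$ by Lemma~\ref{lem:estellrnorm} applied with $r=2$, $Y_i=|X_{i,1}|$ (whose $4$th-to-$2$nd moment ratio is controlled by \eqref{eq:4mom}) once $m\ge 4\alpha^4$; for $m< 4\alpha^4$ one instead notes $\Ex(\sum_i X_{i,1}^2)^{1/2}\ge \Ex|X_{1,1}|\ge \alpha^{-2}(\Ex X_{1,1}^2)^{1/2}\gtrsim_\alpha \sqrt m (\Ex X_{1,1}^2)^{1/2}$, using \eqref{eq:1_2mom} and $\sqrt m\lesssim_\alpha 1$. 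Symmetrically, testing against $s=e_1\in B_2^m$ on the transpose (i.e. using \eqref{eq:duality}) gives the $\gtrsim_\alpha\sqrt n\,(\Ex X_{1,1}^2)^{1/2}$ half. Adding the two lower bounds and recalling $\sqrt n+\sqrt m\sim\max(\sqrt n,\sqrt m)$ finishes this direction.

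Finally, to extrapolate from $(p,q)=(2,2)$ to the whole range $1\le p^*,q\le 2$ (which is what is actually needed for Subsection~\ref{seq:case-leq2}, though the stated Proposition~\ref{prop:twosided22} is only the $(2,2)$ case), one uses the inclusions $B_p^n\subset B_2^n$ and $B_{q^*}^m\subset B_2^m$ when $p\ge 2\ge q^*$, i.e. $p^*\le 2\le q^*$... wait, one needs $p\ge 2$ and $q^*\ge 2$, i.e. $p^*\le 2$ and $q\le 2$, exactly the range in question; hence $\|(X_{i,j})\|_{\ell_p^n\to\ell_q^m}\le\|(X_{i,j})\|_{\ell_2^n\to\ell_2^m}$ pointwise, giving one direction immediately from Proposition~\ref{prop:twosided22}, and the matching lower bound via Proposition~\ref{rem:smallq} (the last displayed equivalence there). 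But for the stated proposition itself this last paragraph is unnecessary; only the two-sided $(2,2)$ estimate is claimed, so the proof ends after combining the upper and lower bounds above. The main obstacle is purely the upper bound's small-dimension bookkeeping; there is no deep difficulty.
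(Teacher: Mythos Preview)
Your very first instinct for the upper bound --- ``Lata{\l}a's bound for spectral norms of random matrices'' --- is exactly what the paper does, and it finishes in one line: \cite[Theorem~2]{La} gives
\[
\Ex\|(X_{i,j})\|_{\ell_2^n\to\ell_2^m}
\lesssim \max_j\Bigl(\sum_i\Ex X_{i,j}^2\Bigr)^{1/2}+\max_i\Bigl(\sum_j\Ex X_{i,j}^2\Bigr)^{1/2}
+\Bigl(\sum_{i,j}\Ex X_{i,j}^4\Bigr)^{1/4},
\]
which under \eqref{eq:4mom} is $\lesssim_\alpha(\sqrt n+\sqrt m)(\Ex X_{1,1}^2)^{1/2}$, no symmetrization or bookkeeping needed. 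The alternative route you then describe (conditional Bandeira--van Handel / Seginer-type bound) has a genuine gap: under only the fourth-moment hypothesis the best you can say about $\Ex\max_{i,j}|X_{i,j}|$ is $\lesssim_\alpha(mn)^{1/4}(\Ex X_{1,1}^2)^{1/2}$, so the term $\sqrt{\Log(m\wedge n)}\,\Ex\max_{i,j}|X_{i,j}|$ is of order $\sqrt{\Log n}\,\sqrt n$ when $m\asymp n$, i.e.\ a logarithm too large. Your remark that ``the expectation version over $X$ absorbs it'' does not explain how this logarithm disappears; the natural fix would be to apply Seginer's theorem directly to the iid symmetric matrix $(\varepsilon_{i,j}X_{i,j})$ rather than conditionally, but at that point you are essentially re-proving Lata{\l}a's bound anyway.

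For the lower bound your argument via Lemma~\ref{lem:estellrnorm} is correct, but the paper again does something simpler: Jensen's inequality gives $\Ex\|(|X_{i,1}|)_{i\le m}\|_2\ge\|(\Ex|X_{i,1}|)_{i\le m}\|_2=\sqrt m\,\Ex|X_{1,1}|$, and then \eqref{eq:1_2mom} converts $\Ex|X_{1,1}|$ to $(\Ex X_{1,1}^2)^{1/2}$ at the cost of $\alpha^{-2}$. No case split on the size of $m$ is needed.
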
 

\begin{proof}
By \cite[Theorem 2]{La} we have
\begin{align*}
\Ex\bigl\|(X_{i,j})_{i\leq m,j\leq n}\bigr\|_{\ell_2^n\rightarrow\ell_2^m}
&\lesssim\max_j\sqrt{\sum_{i}\Ex X_{i,j}^2}+\max_i\sqrt{\sum_{j}\Ex X_{i,j}^2}+
\sqrt[4]{\sum_{i,j}\Ex X_{i,j}^4}
\\
&\leq (\Ex X_{1,1}^2)^{1/2}(\sqrt{n}+\sqrt{m}+\alpha\sqrt[4]{nm})
\lesssim_{\alpha} (\Ex X_{1,1}^2)^{1/2}(\sqrt{n}+\sqrt{m}).
\end{align*} 
To get the lower bound we use  Jensen's inequality and \eqref{eq:1_2mom}:
\begin{align*}
\Ex\bigl\|(X_{i,j})_{i\leq m,j\leq n}\bigr\|_{\ell_2^n\rightarrow\ell_2^m}
&\geq 
\max\Bigl\{\Ex\bigl\|(|X_{i,1}|)_{i\leq m}\bigr\|_2,\Ex\bigl\|(|X_{1,j}|)_{j\leq n}\bigr\|_2 \Bigr\}
\\
&\geq \max\Bigl\{\bigl\|(\Ex|X_{i,1}|)_{i\leq m}\bigr\|_2,\bigl\|(\Ex|X_{1,j}|)_{j\leq n}\bigr\|_2 \Bigr\}
\geq \alpha^{-2}(\Ex X_{1,1}^2)^{1/2}\sqrt{n\vee m}. \qedhere
\end{align*}

\end{proof}

\begin{cor}\label{cor-p*,q-less2}
Let $(X_{i,j})_{ i\leq m,j\leq n}$ be iid centered random variables  satisfying \eqref{eq:4mom}. 
Then for $p^*,q\le 2$ we have
\[
\Ex\bigl\|(X_{i,j})_{i\leq m,j\leq n}\bigr\|_{\ell_p^n\rightarrow\ell_q^m}
\sim_{\alpha}(\Ex X_{1,1}^2)^{1/2}(m^{1/q-1/2}n^{1/p^*}+n^{1/p^*-1/2}m^{1/q}).
\]
\end{cor}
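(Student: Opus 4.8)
The plan is to reduce Corollary~\ref{cor-p*,q-less2} to Proposition~\ref{prop:twosided22} by a two-sided ``tensorization with block matrices'' argument, exploiting the simple geometry of the balls $B_p^n$ and $B_{q^*}^m$ when $p^*,q\le 2$. First I would recall that for $p^*\le 2$ (equivalently $p\ge 2$) one has the comparisons $n^{-1/p}B_\infty^n\subset n^{1/p^*-1}B_1^n$... more usefully, $B_p^n$ sits between $n^{1/p^*-1/2}B_2^n$ and (a constant times) the convex hull of $n^{1/p^*-1}B_1^n$ with $B_2^n$; but the cleaner route is just the sandwich $n^{1/p^*-1/2}B_2^n\subseteq B_p^n$ (which gives the lower bound directly) together with the dual statement $B_{q^*}^m\supseteq m^{1/q-1/2}B_2^m$ for $q\le 2$.

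For the \textbf{lower bound}, I would restrict the supremum defining $\|X\|_{\ell_p^n\to\ell_q^m}$ to $t\in n^{1/p^*-1/2}B_2^n$ and to $s\in m^{1/q-1/2}B_2^m$ (recall $\|A\|_{\ell_p^n\to\ell_q^m}=\sup_{t\in B_p^n,s\in B_{q^*}^m}s^TAt$), obtaining
\[
\Ex\|X\|_{\ell_p^n\to\ell_q^m}\;\ge\; n^{1/p^*-1/2}m^{1/q-1/2}\,\Ex\|X\|_{\ell_2^n\to\ell_2^m}\;\gtrsim_\alpha (\Ex X_{1,1}^2)^{1/2}\,n^{1/p^*-1/2}m^{1/q-1/2}(\sqrt n+\sqrt m),
\]
by Proposition~\ref{prop:twosided22}, and the last quantity equals $(\Ex X_{1,1}^2)^{1/2}(n^{1/p^*-1/2}m^{1/q}+n^{1/p^*}m^{1/q-1/2})$ up to the factor coming from $\sqrt n+\sqrt m\sim \sqrt{n\vee m}$. (Strictly: $n^{1/p^*-1/2}m^{1/q-1/2}\sqrt n=n^{1/p^*}m^{1/q-1/2}$ and symmetrically, so the sum of the two reductions gives both terms.)

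For the \textbf{upper bound} I would use the dual description $\|A\|_{\ell_p^n\to\ell_q^m}=\sup s^TAt$ over $t\in B_p^n$, $s\in B_{q^*}^m$, and the containments $B_p^n\subseteq n^{1/p^*-1/2}B_2^n$ is \emph{false} for $p\ge 2$; instead the correct inclusion is $B_p^n\subseteq B_2^n$ for $p\le 2$, i.e. $B_p^n\subseteq B_2^n$ holds when $p\ge 2$: indeed for $p\ge2$, $\|t\|_2\le\|t\|_p$, so $B_p^n\subseteq B_2^n$. Hence for $p^*\le 2$ (so $p\ge 2$) and $q\le2$ (so $q^*\ge 2$, and $B_{q^*}^m\subseteq B_2^m$), we get $\|A\|_{\ell_p^n\to\ell_q^m}\le\|A\|_{\ell_2^n\to\ell_2^m}$ — but this loses the powers of $n,m$. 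The fix is to split indices: partition the column index set into blocks so that $t\mapsto$ its restriction has controlled $\ell_2$ mass, or more directly write $\sup_{t\in B_p^n}\|At\|_q$ and use H\"older to pass to $\ell_2\to\ell_2$ \emph{after} rescaling by the extremal vectors $t=n^{-1/p}(1,\dots,1)$, $s=m^{-1/q^*}(1,\dots,1)$: these show the two target terms are each dominated, via Jensen and \eqref{eq:1_2mom}, already giving the matching lower bound, while for the upper bound one observes $B_p^n\subseteq n^{1/p^*-1/2}\,\mathrm{conv}(B_2^n\cup\{n^{1/2-1/p^*}\text{-scaled }\ell_1\text{-ball}\})$ is unnecessary — it suffices to note that $\ell_p^n\to\ell_q^m$ norm of a matrix with all entries of comparable size behaves like the stated expression, which one gets by interpolating the endpoint bounds $\|X\|_{\ell_2^n\to\ell_2^m}$, $\|X\|_{\ell_1^n\to\ell_1^m}=\max_j\sum_i|X_{i,j}|$ and $\|X\|_{\ell_\infty^n\to\ell_\infty^m}=\max_i\sum_j|X_{i,j}|$ via the Riesz--Thorin theorem, and then bounding each endpoint expectation by $(\Ex X_{1,1}^2)^{1/2}$ times the appropriate power of $n$ and $m$ using Proposition~\ref{prop:twosided22} for the $2\to 2$ corner and the moment comparison \eqref{eq:4mom} (which controls $\Ex\max_j\sum_i|X_{i,j}|\lesssim_\alpha n^{1/p}$-type sums) for the $1\to1$ and $\infty\to\infty$ corners. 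Combining, $\Ex\|X\|_{\ell_p^n\to\ell_q^m}\lesssim_\alpha (\Ex X_{1,1}^2)^{1/2}(m^{1/q-1/2}n^{1/p^*}+n^{1/p^*-1/2}m^{1/q})$.

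The \textbf{main obstacle} is making the upper bound genuinely quantitative: the naive inclusion $B_p^n\subseteq B_2^n$ gives only the $2\to2$ bound, and one must instead interpolate between the three corners $(1,1)$, $(2,2)$, $(\infty,\infty)$ — but the expectation does not commute with interpolation, so the cleanest fix is to interpolate the \emph{deterministic} operator-norm inequality (Riesz--Thorin: for any fixed matrix $A$, $\|A\|_{\ell_p\to\ell_q}\le\|A\|_{\ell_2\to\ell_2}^{\theta}\|A\|_{\ell_{q_0}\to\ell_{q_0}}^{1-\theta}$ for an appropriate $q_0\in\{1,\infty\}$ and $\theta$ determined by $p,q$), then take expectations and use H\"older; the random quantities $\|A\|_{\ell_1\to\ell_1}=\max_j\sum_{i\le m}|X_{i,j}|$ and $\|A\|_{\ell_\infty\to\ell_\infty}=\max_i\sum_{j\le n}|X_{i,j}|$ have expectation $\sim_\alpha (\Ex X_{1,1}^2)^{1/2}\cdot m$ and $\sim_\alpha (\Ex X_{1,1}^2)^{1/2}\cdot n$ respectively by the law of large numbers together with \eqref{eq:4mom} (which, via Proposition~\ref{prop:equiv-reg-N}-type control or a direct second-moment argument, rules out large fluctuations of the $\max$). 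A slightly delicate point is that $p^*\le 2$ and $q\le 2$ do not force $p\le q^*$ or the reverse, so one may need to choose the interpolation endpoint to be $(\infty,\infty)$ when $1/p^*\ge 1/q$ and $(1,1)$ otherwise; in either case the exponents work out to the claimed $m^{1/q-1/2}n^{1/p^*}+n^{1/p^*-1/2}m^{1/q}$, with the two summands corresponding exactly to the two choices of endpoint. Since the lower bound already matches termwise (from the two one-dimensional test-vector reductions above), this completes the proof.
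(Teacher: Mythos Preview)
Your lower-bound argument is based on inclusions that go the wrong way. For $p^*\le 2$ (so $p\ge 2$) one has $B_p^n\subset n^{1/p^*-1/2}B_2^n$, \emph{not} $n^{1/p^*-1/2}B_2^n\subset B_p^n$: take $t=n^{1/p^*-1/2}e_1$, which lies in the scaled $\ell_2$-ball but has $\|t\|_p=n^{1/p^*-1/2}>1$. Likewise $B_{q^*}^m\subset m^{1/q-1/2}B_2^m$ for $q\le 2$. These correct inclusions yield
\[
\|X\|_{\ell_p^n\to\ell_q^m}\le n^{1/p^*-1/2}m^{1/q-1/2}\,\|X\|_{\ell_2^n\to\ell_2^m},
\]
which combined with Proposition~\ref{prop:twosided22} gives exactly the desired \emph{upper} bound; this is precisely the paper's one-line argument (factor through $\ell_2$ via $\|\mathrm{Id}\|_{\ell_p^n\to\ell_2^n}=n^{1/p^*-1/2}$ and $\|\mathrm{Id}\|_{\ell_2^m\to\ell_q^m}=m^{1/q-1/2}$). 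So you have accidentally proved the upper estimate while calling it the lower one, and your Riesz--Thorin machinery is both unnecessary and broken as written: the endpoints $(1,1)$, $(2,2)$, $(\infty,\infty)$ are collinear on the diagonal $p=q$, so interpolation among them can never reach an off-diagonal pair $(p,q)$ with $p\neq q$.

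The genuine work lies in the lower bound, and there is no ball-inclusion shortcut. The paper symmetrizes, replacing $X_{i,j}$ first by $\varepsilon_{i,j}|X_{i,j}|$ and then, via Jensen applied conditionally on the Rademachers, by $\varepsilon_{i,j}\,\Ex|X_{i,j}|$; the factor $\Ex|X_{1,1}|\gtrsim_\alpha(\Ex X_{1,1}^2)^{1/2}$ comes from \eqref{eq:1_2mom}. This reduces matters to the Rademacher matrix $(\varepsilon_{i,j})$, for which one plugs in the single test vector $t=n^{-1/p}(1,\dots,1)$, uses Kahane--Khintchine to pass from $\Ex\|\cdot\|_q$ to $(\Ex\|\cdot\|_q^q)^{1/q}$, and then Khintchine once more on $\sum_j\varepsilon_{1,j}$ to obtain $\gtrsim n^{1/p^*-1/2}m^{1/q}$. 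The companion term $m^{1/q-1/2}n^{1/p^*}$ follows by duality~\eqref{eq:duality}.
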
 

\begin{proof}
 Let $(\ve_{i,j})_{i,j}$ be iid symmetric $\pm 1$ random variables independent of $(X_{i,j})$. Symmetrization (as in the proof of Proposition~\ref{rem:smallq}) and \eqref{eq:1_2mom} yields
\begin{align*}
\Ex\bigl\|(X_{i,j})_{i\leq m,j\leq n}^n\bigr\|_{\ell_p^n\rightarrow\ell_q^m}
&\geq \frac{1}{2}\Ex\bigl\|(\ve_{i,j}|X_{i,j}|)_{i\leq m,j\leq n}^n\bigr\|_{\ell_p^n\rightarrow\ell_q^m}
\geq \frac{1}{2}\Ex\bigl\|(\ve_{i,j}\Ex|X_{i,j}|)_{i\leq m,j\leq n}\bigr\|_{\ell_p^n\rightarrow\ell_q^m}
\\
&\gtrsim_\alpha (\Ex X_{1,1}^2)^{1/2}\Ex\bigl\|(\ve_{i,j})_{i\leq m,j\leq n}^n\bigr\|.
\end{align*}
We have
\begin{align*}
\Ex\bigl\|(\ve_{i,j})_{i\leq m,j\leq n}^n\bigr\|_{\ell_p^n\rightarrow\ell_q^m}
&\geq
n^{-1/p}\Ex\Bigl\|\Bigl(\sum_{j=1}^n\ve_{i,j}\Bigr)_{i\leq m}\Bigr\|_q
\sim n^{1/p^*-1}\Bigl(\Ex\Bigl\|\Bigl(\sum_{j=1}^n\ve_{i,j}\Bigr)_{i\leq m}\Bigr\|_q^q\Bigr)^{1/q}
\\
&=n^{1/p^*-1}m^{1/q}\Bigl\|\sum_{j=1}^n\ve_{1,j}\Bigr\|_q
\sim
n^{1/p^*-1/2}m^{1/q},
\end{align*}
where in the first line we used the Kahane-Khintchine and in the second one the Khintchine inequalities. By duality \eqref{eq:duality} we get
\begin{align*}
\Ex\bigl\|(\ve_{i,j})_{i\leq m,j\leq n}\bigr\|_{\ell_p^n\rightarrow\ell_q^m}
=\Ex\bigl\|(\ve_{i,j})_{i\leq n,j\leq m}\bigr\|_{\ell_{q^*}^m\rightarrow\ell_{p^*}^n}
\gtrsim m^{1/q-1/2}n^{1/p^*},
\end{align*}
so the lower bound follows.

To get the upper bound we use Proposition \ref{prop:twosided22} together with the following  simple bound
\begin{align*}
\bigl\|(X_{i,j})_{i\leq m,j\leq n}\bigr\|_{\ell_p^n\rightarrow\ell_q^m}
&\leq 
\|\mathrm{Id}\|_{\ell_p^n\rightarrow\ell_2^n}\bigl\|(X_{i,j})_{i\leq m,j\leq n}\bigr\|_{\ell_2^n\rightarrow\ell_2^m}\|\mathrm{Id}\|_{\ell_2^m\rightarrow\ell_q^m}
\\
&=n^{1/2-1/p}m^{1/q-1/2}\bigl\|(X_{i,j})_{i\leq m,j\leq n}\bigr\|_{\ell_2^n\rightarrow\ell_2^m}.	\qedhere
\end{align*}
\end{proof}

Corollary~\ref{cor-p*,q-less2}, Proposition~\ref{rem:smallq}
and \eqref{eq:1_2mom} yield that under condition~\eqref{eq:4mom} Theorem~\ref{thm:rect} holds 
whenever $p^*,q\le 2$. 
Moreover, one may prove by repeating the same arguments that  the  two-sided estimate
\[
\Ex\bigl\|(X_{i,j})_{i\leq m, j\leq n}\bigr\|_{\ell_p^n\rightarrow\ell_q^m}
 \sim_\alpha
 m^{1/q-1/2}n^{1/p^*}+n^{1/p^*-1/2}m^{1/q}
\]
holds for every $p^*,q\le 2$ and independent random variables $X_{i,j}$ satisfying \eqref{eq:4mom} and $\Ex X_{i,j}^2=1$ (we do not need to assume that $X_{i,j}$'s are identically distributed).

%%%%%%%%%%%%%%%%%%%%%%%%%%%%%%%%%%

\subsection{Case $p^*\geq \Log n$ or $q\geq \Log m$}	
\label{seq:case-leqLog}

In this subsection we shall show that Theorem \ref{thm:rect} holds under the regularity assumption
\eqref{alphareg} if $p^*\geq \Log n$ or $q\geq \Log m$.

\begin{rem}
\label{rem:aboveLog}
For $p^*\geq \Log n$,  $\tilde{q}\in[1,\infty)$ and iid random variables $X_i$ we have
\[
\|X_1\|_{\tilde{q}}\leq
\sup_{t\in B_p^n}\Bigl\|\sum_{j=1}^nt_jX_{j}\Bigr\|_{\tilde{q}}
\leq e\|X_1\|_{\tilde{q}}.
\]
Similarly, for  $q\geq \Log m$ and  $\tilde{p}\in[1,\infty)$,
\[
\|X_1\|_{\tilde{p}}\leq
\sup_{t\in B_{q^*}^m}\Bigl\|\sum_{j=1}^nt_jX_{j}\Bigr\|_{\tilde{p}}
\leq e\|X_1\|_{\tilde{p}}. 
\]
\end{rem}

\begin{proof}
The lower bounds are obvious. To see the first upper bound it is enough to use the triangle inequality in $L_{\tilde{q}}$ and observe that 
$\|t\|_1\leq n^{1/p^*}\|t\|_p\leq e $ for $p^*\geq \Log n$ and $\ t\in B_p^n$.
\end{proof}

 By Remark \ref{rem:aboveLog}, Theorem~\ref{thm:rect} in the case $p^*\geq \Log n$ or $q\geq \Log m$ reduces to the following
statement.

\begin{prop} 	
\label{prop:above-log}
Let $(X_{i,j})_{i\leq n,j\leq n}$ be iid centered random variables such that \eqref{alphareg} holds.
Then for $q\geq \Log m$,
\[
\Ex\bigl\|(X_{i,j})_{i\leq m,j\leq n}\bigr\|_{\ell_p^n\rightarrow\ell_q^m}
 \sim_\alpha
\sup_{t\in B_p^n}\Bigl\|\sum_{j\leq n}t_jX_{1,j}\Bigr\|_{\Log m}
+n^{1/p^*}\|X_{1,1}\|_{ p^*\wedge \Log n}.
\]
Analogously, for $p^*\geq \Log n$,
\[
\Ex\bigl\|(X_{i,j})_{i\leq m,j\leq n}\bigr\|_{\ell_p^n\rightarrow\ell_q^m}
 \sim_\alpha  
\sup_{s\in B_{q^*}^m}\Bigl\|\sum_{i\leq m}s_iX_{i,1}\Bigr\|_{\Log n}
+m^{1/q}\|X_{1,1}\|_{ q\wedge \Log m}.
\]
\end{prop}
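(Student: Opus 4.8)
The plan is to prove Proposition~\ref{prop:above-log} by exploiting the duality \eqref{eq:duality}, which reduces the second assertion (for $p^*\geq \Log n$) to the first (for $q\geq \Log m$) after transposing the matrix and swapping the roles of $(p,n)$ and $(q^*,m)$; so I will only treat the case $q\geq \Log m$. The lower bound is already contained in the proof of the lower bound in Theorem~\ref{thm:rect}: one term gives $m^{1/q}\sup_t\|\sum_j t_jX_{1,j}\|_{q\wedge\Log m}\sim_\alpha \sup_t\|\sum_j t_jX_{1,j}\|_{\Log m}$ because $m^{1/q}\sim_\alpha 1$ when $q\geq\Log m$ (here I use \eqref{eq:compmomregalpha} to pass between $\|\cdot\|_q$ and $\|\cdot\|_{\Log m}$ at cost $\sim_\alpha 1$), and the other term gives $n^{1/p^*}\sup_s\|\sum_i s_iX_{i,1}\|_{p^*\wedge\Log n}\geq n^{1/p^*}\|X_{1,1}\|_{p^*\wedge\Log n}$ by testing $s=e_1$. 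So the real content is the upper bound.

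For the upper bound I would first use the pointwise identity $\|(X_{i,j})\|_{\ell_p^n\to\ell_q^m}=\sup_{t\in B_p^n}\|(\sum_j t_jX_{i,j})_{i\le m}\|_q=\sup_{t\in B_p^n}(\sum_{i\le m}|\sum_j t_jX_{i,j}|^q)^{1/q}$. Since $q\geq\Log m$, the $\ell_q^m$ norm of a vector $(v_i)_{i\le m}$ is comparable, up to a factor $m^{1/q}\le e$, to $\max_i |v_i|$; more usefully, $\|(v_i)\|_q\le (\sum_i |v_i|^q)^{1/q}$ and after taking expectations $\Ex\sup_t(\sum_i|\sum_j t_jX_{i,j}|^q)^{1/q}$ is the quantity to bound. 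The natural route is the weak–strong moment comparison \eqref{eq:comp-moments-regular}: with $U=\{(t_js_i)_{i,j}: t\in B_p^n,\ s\in B_{q^*}^m\}$ we have $\|(X_{i,j})\|_{\ell_p^n\to\ell_q^m}=\sup_{u\in U}|\sum_{i,j}X_{i,j}u_{i,j}|$, and \eqref{eq:comp-moments-regular} lets us reduce from the expected sup to the expected sup plus a pure weak-moment term. However the cleaner approach in this regime is to bound $\Ex\sup_t(\sum_i Y_i^q)^{1/q}$ where $Y_i=|\sum_j t_jX_{i,j}|$ directly: split $(\sum_i Y_i^q)^{1/q}\le \max_i Y_i + (\sum_i Y_i^q)^{1/q}\cdot\mathbf 1$ — actually the key is that for $q\geq\Log m$ one has, deterministically, $(\sum_{i\le m}Y_i^q)^{1/q}\le e\,\max_{i\le m}Y_i$ only fails; instead use $(\Ex\sup_t\sum_i Y_i^q)^{1/q}$ and bound $\Ex\sup_t\sum_i Y_i^q\le$ (contribution controlling the $\ell_q$-norm of a single row) by a chaining/union-bound over the $m$ rows, each handled via a net on $B_p^n$.

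Concretely, the cleanest plan: bound $\Ex\|(X_{i,j})\|_{\ell_p^n\to\ell_q^m}\lesssim_\alpha (\Ex\|(X_{i,j})\|_{\ell_p^n\to\ell_q^m}^{\Log m})^{1/\Log m}$ using \eqref{eq:compmomregalpha} together with \eqref{eq:comp-moments-regular} to absorb the gap between first and $\Log m$-th moments at a cost depending only on $\alpha$; then estimate
\[
\Ex\|(X_{i,j})\|_{\ell_p^n\to\ell_q^m}^{\Log m}
\le \Ex\sup_{t\in B_p^n}\sum_{i\le m}\Bigl|\sum_j t_jX_{i,j}\Bigr|^{q}\cdot\Bigl(\text{if }q\ge\Log m\Bigr),
\]
and pull the sup inside using that the rows are independent, together with a net argument on $B_p^n$. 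The first term $\sup_t\|\sum_j t_jX_{1,j}\|_{\Log m}$ then emerges from the per-row $L_{\Log m}$ estimate, while the second term $n^{1/p^*}\|X_{1,1}\|_{p^*\wedge\Log n}$ comes from controlling the net — the entropy of $B_p^n$ produces the $n^{1/p^*}$ factor and the truncation level $p^*\wedge\Log n$ on the moment of a single entry, exactly as in the Gaussian Chevet-type calculation. I expect the main obstacle to be making the net argument on $B_p^n$ quantitatively produce precisely $n^{1/p^*}\|X_{1,1}\|_{p^*\wedge\Log n}$ and not something larger (e.g.\ controlling $\sup_{t\in B_p^n}\sum_j|t_j|\,|X_{i,j}|$-type terms and the interplay between the cardinality of the net, which is $\exp(O(n^{?}))$, and the available moment $\Log m$ or $\Log n$); this is the same difficulty that makes the general upper bound in Section~\ref{sect:upper-bounds} delicate, but here it is tractable because $q\ge\Log m$ collapses the outer $\ell_q^m$ sum to essentially a maximum, so only $O(\Log m)$ effective moments are needed on each row and a crude union bound over an $n^{1/p^*}$-type net suffices.
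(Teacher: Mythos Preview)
Your lower bound and duality reduction are fine. The gap is in the upper bound: you correctly observe that $q\geq\Log m$ collapses the $\ell_q^m$ norm to a maximum, but you then pursue a net argument on $B_p^n$ that you yourself flag as the ``main obstacle''. A net in $B_p^n$ has cardinality $\exp(cn)$, not $n^{1/p^*}$; controlling a union over such a net would require $n$-th moments of single linear forms, which is far more than the $\Log m$ moments you have available, and there is no reason to expect this to yield exactly $n^{1/p^*}\|X_{1,1}\|_{p^*\wedge\Log n}$.

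The point you are missing is that after the collapse to a maximum you can \emph{swap sup and max}: since $\|\cdot\|_q\sim\|\cdot\|_\infty$ on $\er^m$ here,
\[
\|(X_{i,j})\|_{\ell_p^n\to\ell_q^m}
\sim \sup_{t\in B_p^n}\max_{i\le m}\Bigl|\sum_j t_jX_{i,j}\Bigr|
=\max_{i\le m}\|(X_{i,j})_{j\le n}\|_{p^*},
\]
a maximum of $m$ \emph{iid} scalars. Now \eqref{eq:maxk} gives $\Ex\max_i\|(X_{i,j})_j\|_{p^*}\le e\bigl\|\|(X_{1,j})_j\|_{p^*}\bigr\|_{\Log m}$, and applying \eqref{eq:comp-moments-regular} to a \emph{single row} (i.e.\ with $U=B_p^n$ in $\er^n$, $\rho=\Log m$) splits this into $\sup_{t\in B_p^n}\|\sum_j t_jX_{1,j}\|_{\Log m}$ plus the expectation term $\Ex\|(X_{1,j})_j\|_{p^*}$. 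That expectation requires no net at all: for $p^*\le\Log n$ Jensen gives $\Ex\|(X_{1,j})\|_{p^*}\le(\Ex\sum_j|X_{1,j}|^{p^*})^{1/p^*}=n^{1/p^*}\|X_{1,1}\|_{p^*}$, and for $p^*\ge\Log n$ it is $\sim\Ex\max_j|X_{1,j}|\lesssim\|X_{1,1}\|_{\Log n}$ by \eqref{eq:maxk} again. This is the whole proof; the entropy of $B_p^n$ never enters.
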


\begin{proof} 
The lower bounds follow by Section~\ref{sect:lower-bounds} and Remark~\ref{rem:aboveLog}. 
Hence, we should establish only the upper bounds.

By duality \eqref{eq:duality} it is enough to consider the case $q\geq \Log m$. 
We have $\|(x_i)_{i\leq m}\|_\infty \leq\|(x_i)_{i\leq m}\|_q\leq e\|(x_i)_{i\leq m}\|_\infty$, so
\[
\bigl\|(X_{i,j})_{i\leq m,j\leq n}\bigr\|_{\ell_p^n\rightarrow\ell_q^m}
\sim \max_{i\leq m}\bigl\|(X_{i,j})_{j\leq n}\bigr\|_{p^*}.
\]

Note that for arbitrary random variables $Y_1,\ldots,Y_k$ we have
\begin{equation}
\label{eq:maxk}
\Ex\max_{i\leq k}|Y_i|\leq \bigl\|\max_{i\leq k}|Y_i|\bigr\|_{\Log k}
\leq \Bigl(\sum_{i\leq k}\Ex|Y_i|^{\Log k}\Bigr)^{1/\Log k}\leq  e\max_{i\leq k}\|Y_i\|_{\Log k},
\end{equation}
Hence,
\[
\Ex\bigl\|(X_{i,j})_{i\leq m,j\leq n}\bigr\|_{\ell_p^n\rightarrow\ell_q^m}
\lesssim \bigl\|\bigl\|(X_{1,j})_{j\leq n}\bigr\|_{p^*}\bigr\|_{\Log m}.
\]

Inequality \eqref{eq:comp-moments-regular} (applied with $m=1$, $U=\{1\}\otimes B_p^n$, and $\rho = \Log m$) implies
\[
\Bigl\|\bigl\|(X_{1,j})_{j\leq n}\bigr\|_{p^*}\Bigr\|_{\Log m}
 \sim_\alpha
\Ex\bigl\|(X_{1,j})_{j\leq n}\bigr\|_{p^*}
+\sup_{t\in B_p^n}\Bigl\|\sum_{j\leq n}t_jX_{1,j}\Bigr\|_{\Log m}.
\]
If $p^*\geq \Log n$ then
\[
\Ex\bigl\|(X_{1,j})_{j\leq n}\bigr\|_{p^*}
\sim \Ex\max_{j\leq n}|X_{1,j}|\lesssim \|X_{1,1}\|_{\Log n},
\]
 where the last bound follows by \eqref{eq:maxk}.
In the case $p^*\leq \Log n$ we have 
\[
\Ex\bigl\|(X_{1,j})_{j\leq n}\bigr\|_{p^*}
\leq \Bigl(\Ex\bigl\|(X_{1,j})_{j\leq n}\bigr\|_{p^*}^{p^*}\Bigr)^{1/p^*}
=n^{1/p^{*}}\|X_{1,1}\|_{p^*}. \qedhere
\]
\end{proof}

%%%%%%%%%%%%%%%%%%%%%%%%%%%%%%%%%%%%%%%%%%%

\subsection{Outline of proofs of upper bounds in remaining ranges}
\label{sect:outline}

Let us first note that we may 
 assume that random variables $X_{i,j}$ are symmetric, due to the following remark.

\begin{rem}
\label{rem:symmetrization}
It suffices to prove the upper bound from Theorem~\ref{thm:rect} under additional assumption that random variables $X_{ij}$ 
are symmetric. 
\end{rem}
 
\begin{proof} Let $(X_{i,j}')_{i\le m, j\le n}$ be an independent copy of a random matrix $(X_{i,j})_{i\le m, j\le n}$,
and let $Y_{i,j}=X_{i,j}-X_{i,j}'$. Then \eqref{alphareg} implies for every $\rho\ge 1$,
\begin{align*}
\|Y_{i,j}\|_{2\rho} 
&\leq \|X_{i,j}\|_{2\rho} + \|X_{i,j}'\|_{2\rho}
=2 \|X_{i,j}\|_{2\rho}
\le 2\alpha \|X_{i,j}\|_\rho 
=2\alpha \|X_{i,j} - \Ex X_{i,j}' \|_\rho 
\\
&\leq 2\alpha \|X_{i,j} -  X_{i,j}' \|_\rho 
= 2\alpha \|Y_{i,j}\|_{\rho} .
\end{align*}
Therefore, $(Y_{i,j})_{i\le m, j \le n}$ are iid symmetric random variables satisfying \eqref{alphareg} with $\alpha\coloneqq 2\alpha$.
Moreover, 
\begin{align*}
\Ex\sup_{s\in S, t \in T} \sum_{i\le m, j\le n} X_{i,j}s_it_j 
&= \Ex\sup_{s\in S, t \in T} \sum_{i\le m, j\le n} (X_{i,j} - \Ex X_{i,j}')s_it_j 
\\ 
& 
\le \Ex\sup_{s\in S, t \in T} \sum_{i\le m, j\le n} (X_{i,j} -  X_{i,j}')s_it_j 
= \Ex\sup_{s\in S, t \in T} \sum_{i\le m, j\le n} Y_{i,j}s_it_j,
\end{align*}
so it suffices to upper bound $\Ex\sup_{s\in S, t \in T} \sum_{i\le m, j\le n} Y_{i,j}s_it_j$ by 
\begin{multline*}
m^{1/q}\sup_{t\in B_p^n}\Bigl\|\sum_{j=1}^nt_jY_{1,j}\Bigr\|_{ q\wedge \Log m}
+n^{1/p^*}\sup_{s\in B_{q^*}^m}\Bigl\|\sum_{i=1}^{m} s_iY_{i,1}\Bigr\|_{ p^*\wedge \Log n}
\\ \le 2 m^{1/q}\sup_{t\in B_p^n}\Bigl\|\sum_{j=1}^nt_jX_{1,j}\Bigr\|_{ q\wedge \Log m}
+2n^{1/p^*}\sup_{s\in B_{q^*}^m}\Bigl\|\sum_{i=1}^{m} s_iX_{i,1}\Bigr\|_{ p^*\wedge \Log n}.
\qedhere
\end{multline*}
\end{proof}
 
 We shall also assume without loss of generality that $\alpha\geq \sqrt{2}$. Then \eqref{eq:compmomregalpha} holds
with $\beta=\log_2\alpha$.

One of the ideas used in the sequel is to decompose
certain  subsets  $S$ of $B_{q^*}^m$  and $T$ of $B_p^n$ in the following way. 
Let $T$ be a monotone subset of $B_p^n$ (we  need the monotonicity only to guarantee that if 
$t\in T$ and $I\subset [n]$, then $(tI_{\{i\in I\}})\in T$). Fix $a\in (0,1]$ and 
write $t\in T$ as  $t=(t_{i}I_{\{|t_i|\leq a\}})+(t_{i}I_{\{|t_i|> a\}})$.
Since $a^p\, |\{i\colon |t_i|> a\}|\leq \|t\|_p\leq 1$, we get $T\subset T_1+T_2$, 
where
\[
T_1=T\cap aB_\infty^n, \qquad T_2=\{t\in T\colon |\supp t|\le a^{-p}\}.
\] 
Choosing $a=k^{-1/p}$ we see that for every $1\leq k\leq n$ we have $T\subset T_1+T_2$, where
\[
T_1=T\cap k^{-1/p}B_\infty^n, \qquad T_2=\{t\in T\colon |\supp t|\le k\}.
\]
Similarly, we may also decompose monotone subsets $S$ of $B_{q^*}^m$ into two parts: one containing vectors with bounded $\ell_\infty$-norm and the other containing vectors with bounded support.

Once we decompose $B_p^n$ and $B_{q^*}^m$ as above,  we need to control the quantities of the form 
$\Ex \sup_{s\in S, t\in T} \sum X_{i,j}s_it_j$ provided we have additional information about the 
$\ell_\infty$-norm or the size of the support (or both of them) for vectors from $S$ and $T$.  
 In the next subsection  we present a~couple of lemmas allowing to upper bound this type of quantities 
in various situations.

\subsection{Tools used in proofs of upper bounds in remaining ranges} 
\label{sect:tools}

\begin{lem} 	
\label{lem:S-bdd-T-bddsupp}
Assume that $k,l\in\zet_{+}$, $p^*,q\geq 1$, $a,b>0$ and $(X_{i,j})_{i\leq m,j\leq n}$ are iid symmetric random variables satisfying \eqref{alphareg} with $\alpha\ge \sqrt{2}$,  and $\Ex X_{i,j}^2=1$. 
Denote $\gamman= \log_{2}\alpha$.
	
If $q\geq 2$, $S\subset B_{q^*}^m\cap aB_{\infty}^m$ and 
$T\subset \{t\in B_{p}^n \colon |\supp(t)|\le k\}$, 
then
\begin{equation}	
\label{eq:lem-S-bdd-T-bddsupp}
\Ex\sup_{s\in S, t \in T} \sum_{i\le m, j\le n} X_{i,j}s_it_j 
\lesssim_\alpha m^{1/q} \sup_{t\in T} \Bigl\|\sum_{j=1}^n X_{1,j}t_j \Bigr\|_q 
+ \bigl( n\wedge(k\Log n)\bigr)^{\gamman}k^{(1/p^*-1/2)\vee 0} a^{(2-q^*)/2}.
\end{equation}
	
If $p^*\geq 2$, $S\subset \{s\in B_{q^*}^m \colon |\supp(s)|\le l\}$ and 
$T\subset B_{p}^n\cap bB_{\infty}^n $, then
\begin{equation} 
\label{eq:lem-T-bdd-S-bddsupp}
\Ex\sup_{s\in S, t \in T} \sum_{i\le m, j\le n} X_{i,j}s_it_j 
\lesssim_\alpha n^{1/p^*} \sup_{s\in S} \Bigl\|\sum_{i=1}^m X_{i,1}s_i \Bigr\|_{p^*} 
+ \bigl(m\wedge(l\Log m)\bigr)^{\gamman}l^{(1/q-1/2)\vee 0} b^{(2-p)/2}.
\end{equation}
\end{lem}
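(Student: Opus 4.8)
By the duality \eqref{eq:duality} the two statements are equivalent, so it suffices to prove \eqref{eq:lem-S-bdd-T-bddsupp}, i.e.\ the case $q\ge 2$ with $S\subset B_{q^*}^m\cap aB_\infty^m$ and every $t\in T$ supported on at most $k$ coordinates. The idea is to split the supremum over $s$ from the one over $t$ using the chaining/contraction structure: for fixed $t$ the vector $(\sum_j X_{i,j}t_j)_{i\le m}$ has iid symmetric coordinates, so a Bernoulli comparison lets us pass to estimating the two ``marginal'' suprema separately. Concretely, first I would fix $t\in T$ and bound $\sup_{s\in S}\sum_{i,j}X_{i,j}s_it_j$ by splitting $S$ itself (using the decomposition from Subsection~\ref{sect:outline}) into a part $S_1$ with controlled $\ell_\infty$-norm and a part $S_2$ with controlled support size. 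The $S_2$-part, where $s$ is supported on few coordinates, is handled by H\"older: $\sum_i W_i s_i\le \|s\|_{q^*}\|(W_i)_{i\in\supp s}\|_q$, and then one takes expectation and uses \eqref{eq:comp-moments-regular} to replace the expected norm of a short iid vector by $m^{1/q}\sup_{t\in T}\|\sum_j X_{1,j}t_j\|_q$ up to the weak-moment term.

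\textbf{The $\ell_\infty$-bounded part and the error term.} The genuinely new work is the part of $S$ with $\|s\|_\infty\le a$. Here I would write $\Ex\sup_{s\in S_1,t\in T}\sum_{i,j}X_{i,j}s_it_j$ and use symmetry to insert Bernoulli signs, then condition and apply the moment comparison \eqref{eq:comp-moments-regular} with $\rho=\Log(\text{number of relevant coordinates})$ together with \eqref{eq:compmomregalpha}; the exponent $\gamman=\log_2\alpha$ enters precisely through iterating \eqref{eq:compmomregalpha} from $L_2$ up to $L_{\Log(\cdot)}$, which produces the factor $(\Log(\cdot))^{\gamman}$. The ``number of relevant coordinates'' is where the $n\wedge(k\Log n)$ comes from: each $t\in T$ touches at most $k$ columns, and taking a union/maximum over the at most $n$ columns that can appear across all $t\in T$, one only ever needs a $\Log n$-th moment on each of $k$ blocks, hence the effective count is $k\Log n$, but it is also trivially at most $n$. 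The factor $a^{(2-q^*)/2}$ comes from interpolating $\|s\|_{q^*}\le \|s\|_\infty^{1-2/q^*}\|s\|_2^{2/q^*}$ (valid since $q^*\le 2$) against the fact that $\|s\|_2\le 1$, while $k^{(1/p^*-1/2)\vee 0}$ is the operator norm of the identity $\ell_p^k\to\ell_2^k$ controlling $\|t\|_2$ for $t$ supported on $k$ coordinates. Assembling: $\Ex\sup_{S_1,T}\lesssim_\alpha (n\wedge(k\Log n))^{\gamman}k^{(1/p^*-1/2)\vee 0}a^{(2-q^*)/2}\cdot\|X_{1,1}\|_2$, and $\|X_{1,1}\|_2=1$ by assumption.

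\textbf{Main obstacle.} The delicate point is organizing the chaining over $T$ (vectors of bounded support) simultaneously with the $\ell_\infty$-control on $S$ so that one really pays only a $\Log n$ factor rather than $\Log(mn)$ or a power of $n$: this requires noticing that after fixing the support pattern the relevant randomness lives in a $k\times(\text{few})$ block, and that the max over support patterns of $T$ costs at most $\binom{n}{k}\le n^k$, whose logarithm is $k\ln n$ — exactly matching $k\Log n$ inside the $(\cdot)^{\gamman}$. I would carry this out by (i) reducing to $T$ finite by monotonicity/approximation, (ii) for each fixed support $I$, bounding $\Ex\sup_{s\in S_1}\sup_{t\in T,\ \supp t\subset I}\sum_{i,j}X_{i,j}s_it_j$ via \eqref{eq:comp-moments-regular} at level $\rho$, (iii) taking the $L_\rho$-max over the $\le n^k$ supports using \eqref{eq:maxk}-type bound with $\rho\asymp k\Log n$, and (iv) converting the resulting $L_\rho$ quantity back to an expectation with \eqref{eq:comp-moments-regular} once more. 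Finally I would combine the $S_1$ and $S_2$ contributions and re-run the whole argument with $p,q$ and $m,n$ swapped (equivalently invoke \eqref{eq:duality}) to obtain \eqref{eq:lem-T-bdd-S-bddsupp}.
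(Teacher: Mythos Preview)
Your high-level intuition is correct --- the error term arises from a covering argument on $T$ of cardinality roughly $e^{n\wedge(k\Log n)}$ combined with moment comparison, and the factors $a^{(2-q^*)/2}$ and $k^{(1/p^*-1/2)\vee 0}$ come exactly from the interpolations you describe --- but the organization of your argument is tangled and contains a real confusion about where the term $m^{1/q}\sup_{t\in T}\|\sum_j X_{1,j}t_j\|_q$ comes from.

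You propose to decompose $S$ into an $\ell_\infty$-bounded part $S_1$ and a small-support part $S_2$, attributing the $m^{1/q}$ term to $S_2$. But $S$ is \emph{already} contained in $aB_\infty^m$ by hypothesis, so this split is vacuous; and a small-support restriction on $s$ would, if anything, make the $\ell_q$-dual pairing \emph{smaller}, not produce $m^{1/q}$. Your sentence ``use \eqref{eq:comp-moments-regular} to replace the expected norm of a short iid vector by $m^{1/q}\ldots$'' is not something that inequality does --- it compares strong and weak moments of the \emph{same} quantity, it does not convert short vectors into long ones. Separately, in your step (ii) you fix only the support $I$ and keep a supremum over all $t\in T$ with $\supp t\subset I$; after applying \eqref{eq:comp-moments-regular} you are then left with $\Ex\sup_{s\in S,\,t\in T_I}|\cdot|$, which still has a genuine two-parameter supremum and does not reduce to $m^{1/q}\|\sum_j X_{1,j}t_j\|_q$ without further work (essentially a net on $T_I$, at which point you have reinvented the paper's argument).

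The paper's proof is shorter and avoids all of this. One takes a $\tfrac12$-net $T_0$ of $T$ in the $\ell_p^n$-metric, of cardinality at most $5^n\wedge\binom{n}{k}5^k\le e^d$ with $d=n\wedge(k\ln(5n))$; netting $T$ itself (not just the supports) reduces the problem to a \emph{single} fixed $t$. Then \eqref{eq:maxk} converts the maximum over $T_0$ into the $L_d$-norm of $\sup_{s\in S}|\sum_{i,j}X_{i,j}s_it_j|$ for that fixed $t$, and \eqref{eq:comp-moments-regular} splits this $L_d$-norm into
\[
\Ex\sup_{s\in S}\Bigl|\sum_{i,j}X_{i,j}s_it_j\Bigr|\;+\;\sup_{s\in S}\Bigl\|\sum_{i,j}X_{i,j}s_it_j\Bigr\|_d.
\]
The first summand is at most $(\Ex\|(\sum_j X_{i,j}t_j)_{i\le m}\|_q^q)^{1/q}=m^{1/q}\|\sum_j X_{1,j}t_j\|_q$ by H\"older, using only $S\subset B_{q^*}^m$. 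The second summand is at most $d^{\gamman}\sup_{s\in S}\|s\|_2\sup_{t\in T}\|t\|_2$ by \eqref{eq:compmomregalpha}, and then your interpolations $\|s\|_2\le\|s\|_\infty^{(2-q^*)/2}\|s\|_{q^*}^{q^*/2}\le a^{(2-q^*)/2}$ and $\|t\|_2\le k^{(1/p^*-1/2)\vee 0}\|t\|_p$ finish the job. No splitting of $S$ is needed anywhere.
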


\begin{proof}
It suffices to prove \eqref{eq:lem-S-bdd-T-bddsupp}, since \eqref{eq:lem-T-bdd-S-bddsupp} follows by duality.

Without loss of generality we may assume that $k\le n$. Let $T_0$ be a $\frac 12$-net 
(with respect to $\ell_p^n$-metric) in $T$ of cardinality at most 
$5^n\wedge \bigl( {n\choose k} 5^k \bigr) \le 5^n \wedge (5n)^k=e^d$, where $d=n\wedge (k\ln(5n))$.
	
Then by \eqref{eq:maxk} we get
\begin{align}
\notag
\Ex\sup_{s\in S, t \in T} \sum_{i\le m, j\le n} X_{i,j}s_it_j 
& \leq 2\Ex \sup_{t\in {T_0}}\sup_{s\in S} \sum_{i\le m, j\le n} X_{i,j}s_it_j
 \leq 2e\sup_{t\in T_0} \Bigl(\Ex \sup_{s\in S}
\Bigl|\sum_{i\le m, j\le n} X_{i,j} s_it_j \Bigr|^{d} \Bigr)^{1/d}
\\
\label{eq:upp-lem-ST-bdd-1}
&\leq 2e\sup_{t\in T} \Bigl(\Ex \sup_{s\in S}\Bigl|\sum_{i\le m, j\le n} X_{i,j}s_it_j \Bigr|^{d} \Bigr)^{1/d}.
\end{align}
Fix $t\in T$. By \eqref{eq:comp-moments-regular} applied with $U=\{(s_it_j)_{i,j}\colon s\in S\}$ and $\rho =d$ we have
\begin{equation}
\label{eq:upp-lem-ST-bdd-2}
\Bigl(\Ex \sup_{s\in S}\Bigl|\sum_{i\le m, j\le n} X_{i,j}s_it_j \Bigr|^{d} \Bigr)^{1/d}
\lesssim_\alpha 
\Ex \sup_{s\in S}  \Bigl| \sum_{i\le m, j\le n} X_{i,j}s_it_j\Bigl|
+\sup_{s\in S}\Bigl\|\sum_{ i\le m,j\le n} X_{i,j}s_it_j\Bigr\|_d.
\end{equation}
Since $S\subset B_{q^*}^m$, 
\begin{equation}
\Ex \sup_{s\in S}  \Bigl|\sum_{i\le m, j\le n} X_{i,j}s_it_j \Bigr|
\leq \Bigl(\Ex\Bigl\|\Bigl(\sum_{j=1}^n X_{i,j}t_j\Bigr)_{i\leq m}\Bigr\|_q^q\Bigr)^{1/q}
=m^{1/q}\Bigl\|\sum_{j=1}^n X_{1,j}t_j\Bigr\|_q .
\label{eq:upp-lem-ST-bdd-3}
\end{equation}

Since $\alpha\ge \sqrt{2}$,  $\gamman = \frac 12\vee \log_2\alpha$, so
 by inequality \eqref{eq:compmomregalpha}
\begin{align}
\notag
\sup_{s\in S}\Bigl\|\sum_{ i\le m,j\le n} X_{i,j}s_it_j\Bigr\|_d 
& \lesssim_\alpha 
d^\gamman \sup_{s\in S,t\in T} \|s\|_2\|t\|_2
\leq d^\gamman\sup_{s\in S}\|s\|_\infty^{(2-q^*)/2} \|s\|_{q^*}^{q^*/2}
\sup_{t\in T}k^{(1/2-1/p)\vee 0}\| t\|_p 
 \\ 
\label{eq:upp-lem-ST-bdd-4}
&
\leq d^{\gamman}k^{(1/p^*-1/2)\vee 0}a^{(2-q^*)/2}.
\end{align}
Inequalities \eqref{eq:upp-lem-ST-bdd-1}-\eqref{eq:upp-lem-ST-bdd-4} yield \eqref{eq:lem-S-bdd-T-bddsupp}.
\end{proof}

 In the sequel $(g_{i,j})_{i\le m, j\le n}$ are iid standard Gaussian random 
variables.

\begin{lem}
\label{lem:comwithgauss}
Let $(X_{i,j})_{i\leq m,j\leq n}$ be iid symmetric random variables satisfying \eqref{alphareg}
and $\Ex X_{i,j}^2=1$.  Let $\gamman=\log_{2} \alpha$. 
Then for any nonempty bounded sets $S\subset \er^m$ and $T\subset  \er^n$ we have
\begin{align*}
\Ex\sup_{s\in S, t \in T} \sum_{i\le m, j\le n} X_{i,j}s_it_j
\lesssim \Log^\gamman(mn)\Ex\sup_{s\in S, t \in T} \sum_{i\le m, j\le n} g_{i,j}s_it_j.
\end{align*}
\end{lem}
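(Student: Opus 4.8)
The plan is to reduce the general comparison to a known contraction-type argument for Gaussian processes, paying for the loss of Gaussianity with a factor that is polylogarithmic in the dimension. First I would fix the sets $S\subset\er^m$ and $T\subset\er^n$ and introduce the canonical centered process $Z_{s,t}=\sum_{i,j}X_{i,j}s_it_j$; since $X_{i,j}$ are symmetric with $\Ex X_{i,j}^2=1$, this process has the same "diagonal" behaviour as the Gaussian process $G_{s,t}=\sum_{i,j}g_{i,j}s_it_j$, in the sense that for each fixed pair $(s,t)$ the variance is $\|s\|_2^2\|t\|_2^2$. The natural tool is a moment comparison: for a single pair, \eqref{eq:compmomregalpha} gives $\|Z_{s,t}\|_\rho\lesssim_\alpha \rho^{\gamman}\|G_{s,t}\|_2\sim \rho^{\gamman-1/2}\|G_{s,t}\|_\rho$ when $\rho\ge 2$ (using that Gaussian $L_\rho$ norms grow like $\sqrt\rho$), i.e.\ the $X$-process is, up to a $\rho^{\gamman}$ factor, dominated by the Gaussian one at the level of $\rho$-th moments.

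The key step is then to pass from moments to suprema. I would use the standard device of bounding the expected supremum over $S\times T$ by a single high moment of a maximum over a net. Concretely, take a $\frac12$-net (in the relevant Euclidean-type metrics, or simply an $\ell_2$-net since all the quantities are $2$-homogeneous) of $S$ and of $T$; since $S\subset B_{q^*}^m$ and $T\subset B_p^n$ one can always truncate to finite sets, and the cardinality is at most exponential in $m+n$, so $\ln(\#\text{net})\lesssim \Log(mn)$ up to absolute constants. Applying \eqref{eq:maxk} with $k=\#(\text{net}_S\times\text{net}_T)$, we get
\[
\Ex\sup_{s\in S,t\in T}Z_{s,t}\lesssim \sup_{s,t}\|Z_{s,t}\|_{\Log(mn)}\lesssim_\alpha (\Log(mn))^{\gamman}\sup_{s,t}\|G_{s,t}\|_{\Log(mn)}\lesssim (\Log(mn))^{\gamman}\Ex\sup_{s\in S,t\in T}G_{s,t},
\]
where the first step is the net/maximal inequality, the middle step is the per-pair moment comparison above, and the last step is again \eqref{eq:maxk} run in reverse for the Gaussian process, or more cleanly the Kahane–Khintchine-type comparison $\|G_{s,t}\|_{\Log(mn)}\lesssim \sqrt{\Log(mn)}\,\|G_{s,t}\|_2$ together with the fact that $\sqrt{\Log(mn)}\sup_{s,t}\|G_{s,t}\|_2\lesssim \Ex\sup_{s,t}G_{s,t}$ whenever the net has roughly $mn$-exponential size—but here one must be slightly careful, and I would instead bound $\Ex\sup G_{s,t}$ from below by $\sup_{s,t}\|G_{s,t}\|_{\Log(mn)}$ directly via the same maximal inequality applied to the net that nearly realizes the supremum, so that the two polylog factors match and only one copy of $(\Log(mn))^{\gamman}$ survives.

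I expect the main obstacle to be the bookkeeping around the net and the interchange of "expected supremum" with "supremum of a fixed-order moment": one has to verify that discretizing $S\times T$ to a set of size $e^{O(m+n)}$, applying \eqref{eq:maxk} at level $\rho=\Log(mn)$, and then un-discretizing on the Gaussian side really does leave only a single factor $(\Log(mn))^{\gamman}$ rather than two, and that the $\frac12$-net step (which requires controlling increments $Z_{s,t}-Z_{s',t'}$, not just values) does not force a chaining argument and an extra logarithm. The clean way to avoid chaining is to observe that $2$-homogeneity lets us absorb the net error multiplicatively: if $T_0$ is a $\frac12$-net of $T$ then $\sup_{t\in T}(\cdot)\le 2\sup_{t\in T_0}(\cdot)$ for any norm, applied coordinatewise to $s$ and $t$ in turn, so no increment bound is needed and the argument is genuinely just "moment comparison at one scale plus a maximal inequality". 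Everything else—the appeal to \eqref{eq:compmomregalpha}, the Gaussian moment growth $\|G_{s,t}\|_\rho\sim\sqrt\rho\,\|s\|_2\|t\|_2$ for $\rho\ge 1$, and the elementary inequality in \eqref{eq:maxk}—is routine.
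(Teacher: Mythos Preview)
Your argument has a genuine gap in two places. First, the cardinality estimate is wrong: a $\tfrac12$-net of a bounded set in $\er^n$ (say of $B_p^n$) has size of order $5^n$, so $\ln(\#\text{net})$ is of order $m+n$, not $\Log(mn)$. The two quantities differ exponentially; you cannot run \eqref{eq:maxk} at level $\rho=\Log(mn)$ over such a net. If you used the correct level $\rho\sim m+n$, the factor coming out of \eqref{eq:compmomregalpha} would be $(m+n)^{\gamman}$, not $\Log^{\gamman}(mn)$, which is far too large. Second, even ignoring this, the ``reverse'' step at the end fails: the inequality \eqref{eq:maxk} is one-directional, and there is no general lower bound of the form $\Ex\sup_{s,t}G_{s,t}\gtrsim \sup_{s,t}\|G_{s,t}\|_{\Log(mn)}$. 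Take for instance $S=\{s_0,-s_0\}$, $T=\{t_0\}$ with $\|s_0\|_2=\|t_0\|_2=1$: then $\Ex\sup_{s,t}G_{s,t}=\Ex|g|\sim 1$ while $\sup_{s,t}\|G_{s,t}\|_{\Log(mn)}\sim\sqrt{\Log(mn)}$. A Sudakov-type bound would require well-separated points in $S\times T$, which arbitrary bounded sets need not have.

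The paper's proof avoids nets altogether and exploits symmetry directly: write $(X_{i,j})\stackrel{d}{=}(\varepsilon_{i,j}|X_{i,j}|)$ with independent Rademachers $\varepsilon_{i,j}$, apply the contraction principle to pull out $\max_{i,j}|X_{i,j}|$, bound this maximum via \eqref{eq:maxk} and \eqref{alphareg} by $\|X_{1,1}\|_{\Log(mn)}\lesssim \Log^{\gamman}(mn)$, and finally dominate the Rademacher process by the Gaussian one via Jensen's inequality $\varepsilon_{i,j}=\varepsilon_{i,j}\Ex|g_{i,j}|/\Ex|g_{i,j}|$. The whole argument is three lines and never discretizes $S$ or $T$; the $\Log^{\gamman}(mn)$ factor comes solely from the tail of a \emph{single} coordinate $X_{1,1}$, not from any entropy count.
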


\begin{proof}
Since $X_{i,j}$'s are independent and symmetric,
 $(X_{i,j})_{i\le m, j\le n}$ has the same distribution as  $(\varepsilon_{i,j}|X_{i,j}|)_{i\le m, j\le n}$, where $(\varepsilon_{i,j})_{i\le m, j\le n}$ are iid symmetric $\pm 1$ random variables independent of $X_{i,j}$'s. By the contraction principle 
\begin{align}
\notag
\Ex\sup_{s\in S, t \in T} \sum_{i\le m, j\le n} X_{i,j}s_it_j 
& = \Ex\sup_{s\in S, t \in T} \sum_{i\le m, j\le n} \varepsilon_{i,j} |X_{i,j}|s_it_j 
\\ 
\label{lem-comwithgauss-1}
& \le \Ex\max_{i\le m, j\le n} |X_{i,j}| \cdot \Ex \sup_{s\in S, t \in T} \sum_{i\le m, j\le n} \varepsilon_{i,j}s_it_j .
\end{align} 
Moreover, by \eqref{eq:maxk} and regularity assumption \eqref{alphareg} we have
\begin{equation}
\label{lem-comwithgauss-2}
\Ex \max_{i\le m, j\le n} |X_{i,j}| 
\leq e \|X_{1,1}\|_{\Log(mn)} 
 \lesssim   \Log^\gamman(mn) \|X_{1,1}\|_2 
= \Log^\gamman(mn).
\end{equation} 
Jensen's inequality yields
\begin{align} 
\Ex \sup_{s\in S, t \in T} \sum_{i\le m, j\le n} \varepsilon_{i,j}s_it_j
\sim \Ex \sup_{s\in S, t \in T} \sum_{i\le m, j\le n} \varepsilon_{i,j}\Ex |g_{i,j}|s_it_j
 \lesssim \Ex \sup_{s\in S, t \in T} \sum_{i\le m, j\le n} g_{i,j} s_it_j.
\label{lem-comwithgauss-3}
\end{align}
Inequalities \eqref{lem-comwithgauss-1}-\eqref{lem-comwithgauss-3} yield the assertion.
\end{proof}

The next result is an immediate consequence of the contraction principle (see also \eqref{lem-comwithgauss-1} together with \eqref{lem-comwithgauss-3}), but turns out to be helpful.

\begin{lem}
\label{lem:comwithgauss2}
Let $(X_{i,j})_{i\leq m,j\leq n}$ be centered random variables. Then 
\[
\Ex\sup_{s\in S, t \in T} \sum_{i\le m, j\le n} X_{i,j}s_it_j
\lesssim \max_{i,j}\|X_{i,j}\|_\infty \Ex\sup_{s\in S, t \in T} \sum_{i\le m, j\le n} g_{i,j}s_it_j.
\]
\end{lem}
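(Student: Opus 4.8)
The plan is to run the same two-step argument as in the proof of Lemma~\ref{lem:comwithgauss}, merely replacing the bound $\Ex\max_{i,j}|X_{i,j}|\le e\|X_{1,1}\|_{\Log(mn)}$ used there by the trivial estimate $\Ex\max_{i,j}|X_{i,j}|\le\max_{i,j}\|X_{i,j}\|_\infty$, which holds because $|X_{i,j}|\le\|X_{i,j}\|_\infty$ almost surely for each fixed $(i,j)$ and the index set is finite.

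First I would reduce to the case of symmetric entries. Since the $X_{i,j}$ are independent and centered, passing to an independent copy $(X_{i,j}')$ and applying Jensen's inequality (exactly as in Remark~\ref{rem:symmetrization}) bounds $\Ex\sup_{s\in S,t\in T}\sum_{i,j}X_{i,j}s_it_j$ by $\Ex\sup_{s\in S,t\in T}\sum_{i,j}(X_{i,j}-X_{i,j}')s_it_j$; the differences $Y_{i,j}:=X_{i,j}-X_{i,j}'$ are independent, symmetric, and satisfy $\|Y_{i,j}\|_\infty\le 2\|X_{i,j}\|_\infty$, so it suffices to treat symmetric $X_{i,j}$ (at the cost of an absolute constant).

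For symmetric $X_{i,j}$ I would argue as in \eqref{lem-comwithgauss-1}: the matrix $(X_{i,j})_{i,j}$ has the same law as $(\varepsilon_{i,j}|X_{i,j}|)_{i,j}$, where $(\varepsilon_{i,j})$ are iid Rademacher variables independent of the $X_{i,j}$, and conditioning on $(|X_{i,j}|)$ the contraction principle applied to the multipliers $|X_{i,j}|/\max_{k,l}|X_{k,l}|\in[0,1]$ yields
\[
\Ex\sup_{s\in S,t\in T}\sum_{i,j}X_{i,j}s_it_j\le\Ex\Bigl(\max_{i,j}|X_{i,j}|\Bigr)\cdot\Ex\sup_{s\in S,t\in T}\sum_{i,j}\varepsilon_{i,j}s_it_j\le\max_{i,j}\|X_{i,j}\|_\infty\cdot\Ex\sup_{s\in S,t\in T}\sum_{i,j}\varepsilon_{i,j}s_it_j.
\]
Invoking \eqref{lem-comwithgauss-3} to replace the Rademacher process on the right by the Gaussian one, up to an absolute constant, gives precisely the asserted inequality.

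There is no genuine obstacle here --- as the preceding remark in the text indicates, the statement is essentially immediate. The only points deserving a word of care are that independence of the $X_{i,j}$ is what legitimizes both the symmetrization step and the distributional identity $(X_{i,j})\stackrel{d}{=}(\varepsilon_{i,j}|X_{i,j}|)$, and that the a.s. pointwise bound $|X_{i,j}|\le\|X_{i,j}\|_\infty$ is exactly what lets the contraction principle pull the constant $\max_{i,j}\|X_{i,j}\|_\infty$ out of the supremum.
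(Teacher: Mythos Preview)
Your proposal is correct and follows exactly the approach the paper indicates: the paper does not write out a proof at all, but simply remarks that the lemma is ``an immediate consequence of the contraction principle (see also \eqref{lem-comwithgauss-1} together with \eqref{lem-comwithgauss-3})'', which is precisely the two-step argument you spell out. Your observation that independence of the $X_{i,j}$ is tacitly assumed (and needed for the symmetrization and the distributional identity $(X_{i,j})\stackrel{d}{=}(\varepsilon_{i,j}|X_{i,j}|)$) is well taken; the paper omits this from the statement but all applications are to independent symmetric entries.
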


Let us recall Chevet's inequality from \cite{Ch}:
\begin{equation}
\label{eq:Chevet}
\Ex\sup_{s\in S, t \in T} \sum_{i\le m, j\le n} g_{i,j}s_it_j
\lesssim 
\sup_{s\in S}\|s\|_2\Ex\sup_{t\in T}\sum_{j\leq n}g_jt_j
+\sup_{t\in T}\|t\|_2\Ex\sup_{s\in S}\sum_{i\leq m}g_is_i.
\end{equation}
We use it to derive the following two lemmas.

\begin{lem} 	
\label{lem:viaChevet}
Let $q\geq 2$, $p\geq 1$, $S\subset \{s\in B_{q^*}^m\colon\ |\supp(s)|\le l\}\cap aB_{\infty}^m$, and $T\subset B_{p}^n$. Then
\begin{align*}
\Ex\sup_{s\in S, t \in T} \sum_{i\le m, j\le n} g_{i,j}s_it_j
\lesssim \sqrt{p^*}a^{(2-q^*)/2}n^{1/p^*}+n^{(1/p^*-1/2)\vee 0}\sqrt{\Log m}\,l^{1/q}.
\end{align*}
If we assume additionally that  $l=m$, $p^*\ge 2$, and $T\subset bB_\infty^n$, then
\begin{align}\label{eq:viaChevet-ST-bdd}
\Ex\sup_{s\in S, t \in T} \sum_{i\le m, j\le n} g_{i,j}s_it_j
\lesssim \sqrt{p^*}a^{(2-q^*)/2}n^{1/p^*}+\sqrt{q}b^{(2-p)/2}m^{1/q}.
\end{align}
\end{lem}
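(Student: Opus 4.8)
The plan is to apply Chevet's inequality \eqref{eq:Chevet} and estimate each of the two resulting terms using the structural constraints on $S$ and $T$. First I would bound the ``radii'' $\sup_{s\in S}\|s\|_2$ and $\sup_{t\in T}\|t\|_2$: for $s\in S\subset B_{q^*}^m\cap aB_\infty^m$ with $|\supp(s)|\le l$ we interpolate $\|s\|_2^2\le \|s\|_\infty^{2-q^*}\|s\|_{q^*}^{q^*}\le a^{2-q^*}$ (using $q^*\le 2$), so $\sup_{s\in S}\|s\|_2\le a^{(2-q^*)/2}$; and for $t\in T\subset B_p^n$ we have $\|t\|_2\le n^{(1/2-1/p)\vee 0}\|t\|_p\le n^{(1/p^*-1/2)\vee 0}$ since passing from $\ell_p$ to $\ell_2$ on $\er^n$ costs at most $n^{(1/p^*-1/2)\vee 0}$.

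Next I would estimate the two Gaussian suprema over the ``marginal'' index sets. For $\Ex\sup_{t\in T}\sum_{j\le n}g_jt_j\le \Ex\sup_{t\in B_p^n}\sum_j g_jt_j=\Ex\|(g_j)_{j\le n}\|_{p^*}$, which is $\sim\sqrt{p^*\wedge\Log n}\,n^{1/p^*}\le \sqrt{p^*}\,n^{1/p^*}$ by the standard estimate for Gaussian $\ell_{p^*}$-norms (for $p^*\ge 2$; for $p^*<2$ one gets $\le\sqrt{2}\,n^{1/p^*}$ as well, so $\sqrt{p^*}\,n^{1/p^*}$ is a valid bound in all cases with $p\ge 1$). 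For the other marginal, $\Ex\sup_{s\in S}\sum_{i\le m}g_is_i$: since $S\subset B_{q^*}^m\cap aB_\infty^m$ and $|\supp(s)|\le l$, each $s$ is supported on $\le l$ coordinates, so this is at most $\Ex\max_{|I|\le l}\|(g_i)_{i\in I}\|_q$; bounding $\|(g_i)_{i\in I}\|_q\le l^{1/q}\max_{i}|g_i|$ and using \eqref{eq:maxk} gives $\lesssim l^{1/q}\sqrt{\Log m}$ (and one could alternatively keep it as $\|(g_i)_{i\le m}\|_q\lesssim\sqrt{\Log m}\,m^{1/q}$, but the support bound is what yields the sharper $l^{1/q}$). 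Plugging these four estimates into \eqref{eq:Chevet} gives exactly
\[
\Ex\sup_{s\in S,t\in T}\sum_{i,j}g_{i,j}s_it_j\lesssim a^{(2-q^*)/2}\sqrt{p^*}\,n^{1/p^*}+n^{(1/p^*-1/2)\vee 0}\sqrt{\Log m}\,l^{1/q},
\]
which is the first assertion.

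For the second assertion ($l=m$, $p^*\ge 2$, $T\subset bB_\infty^n$), the first Chevet term is unchanged, but now I would re-estimate the second term using the extra constraint on $T$ rather than the support bound on $S$. With $l=m$ the support restriction on $S$ is vacuous, so $\Ex\sup_{s\in S}\sum_i g_is_i\le\Ex\sup_{s\in B_{q^*}^m}\sum_i g_is_i=\Ex\|(g_i)_{i\le m}\|_q\lesssim\sqrt{q}\,m^{1/q}$ (using $q\ge 2$). Meanwhile $\sup_{t\in T}\|t\|_2\le\|t\|_\infty^{(2-p)/2}\|t\|_p^{p/2}\le b^{(2-p)/2}$ by interpolation, valid since $p\le 2$ is forced by $p^*\ge 2$; if $p>2$ one instead simply has $\|t\|_2\le\|t\|_p\le 1\le b^{(2-p)/2}\cdot(\text{harmless})$ — but in fact under $p^*\ge2$ we have $p\le 2$ so the interpolation is exactly what we want. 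This yields $\sqrt{q}\,b^{(2-p)/2}m^{1/q}$ for the second Chevet term, giving \eqref{eq:viaChevet-ST-bdd}.

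The routine parts are the norm interpolations and the Gaussian $\ell_\rho$-norm estimates, which are all standard; the only point requiring a little care — the ``main obstacle,'' such as it is — is making sure the bound $\Ex\sup_{s\in S}\sum_i g_is_i\lesssim l^{1/q}\sqrt{\Log m}$ correctly exploits the support constraint $|\supp(s)|\le l$ uniformly over $s\in S$ (a union bound over the $\binom{m}{l}$ possible supports, absorbed by $\Log m$ via \eqref{eq:maxk}), as opposed to the cruder $\sqrt{\Log m}\,m^{1/q}$; and checking that the case distinctions on $p$ versus $2$ are consistent with the hypotheses in each of the two assertions.
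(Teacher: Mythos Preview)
Your proposal is correct and follows essentially the same route as the paper: apply Chevet's inequality \eqref{eq:Chevet}, bound the two radii $\sup_{s\in S}\|s\|_2\le a^{(2-q^*)/2}$ and $\sup_{t\in T}\|t\|_2\le n^{(1/p^*-1/2)\vee 0}$ (resp.\ $b^{(2-p)/2}$ in the second part) by interpolation, and bound the two Gaussian means by $\sqrt{p^*}\,n^{1/p^*}$ and $l^{1/q}\sqrt{\Log m}$ (resp.\ $\sqrt{q}\,m^{1/q}$). One small remark: your closing comment about a union bound over $\binom{m}{l}$ supports is unnecessary---both you and the paper actually use the simpler pointwise bound $\sup_{|I|\le l}\|(g_i)_{i\in I}\|_q\le l^{1/q}\max_{i\le m}|g_i|$ followed by $\Ex\max_{i\le m}|g_i|\lesssim\sqrt{\Log m}$.
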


\begin{proof}
We have
\[
\sup_{t\in T}\|t\|_2\leq \sup_{t\in B_p^n}\|t\|_2=n^{(1/p^*-1/2)\vee 0},
\]
\[
\sup_{s\in S}\|s\|_2\leq \sup_{s\in S}\|s\|_{q^*}^{q^*/2}\|s\|_\infty^{(2-q^*)/2}\leq a^{(2-q^*)/2},
\] 
\begin{equation*}
\Ex\sup_{t\in T}\sum_{j=1}^n g_jt_j\leq 
 \Ex\sup_{t\in B_p^n}\sum_{j=1}^n g_jt_j=
\Ex\|(g_j)_{j=1}^n\|_{p^*}
\leq (\Ex\|(g_j)_{j=1}^n\|_{p^*}^{p^*})^{1/p^*}=\|g_1\|_{p^*}n^{1/p^*}\leq \sqrt{p^*} n^{1/p^*},
\end{equation*}
and
\[
\Ex\sup_{s\in S}\sum_{i=1}^m g_is_i\leq \Ex\sup_{I\subset [m],|I|\leq l}\Bigl(\sum_{i\in I}|g_i|^q\Bigr)^{1/q}
\leq l^{1/q}\Ex\max_{i\le m}|g_i|\lesssim l^{1/q}\sqrt{\Log m}.
\]
The first assertion follows by Chevet's inequality \eqref{eq:Chevet} and the four bounds above.

In the case when  $l=m$, $p^*\ge 2$, and $T\subset bB_\infty^n$ we  use a different bound for $\sup_{t\in T}\|t\|_2$, namely
\[
\sup_{t\in T}\|t\|_2\leq \sup_{t\in T}\|t\|_{p}^{p/2}\|t\|_\infty^{(2-p)/2}\leq b^{(2-p)/2},
\] 
and for $\Ex\sup_{s\in S}\sum_{i=1}^m g_is_i$, namely
\[
\Ex\sup_{s\in S}\sum_{i=1}^m g_is_i  \le \Ex\sup_{s\in B_{q^*}^m}\sum_{i=1}^m g_is_i \le \sqrt q m^{1/q}.\qedhere
\]
\end{proof}

The next lemma is a slight modification of the previous one.

\begin{lem} 	
\label{lem:viaChevet2}
Let  $2\leq p^*,q \leq\gamma$, $S\subset \{s\in B_{q^*}^m\colon\ |\supp(s)|\le l\}\cap aB_{\infty}^m$ and $T\subset B_{p}^n$. Then
\begin{align*}
\Ex\sup_{s\in S, t \in T} \sum_{i\le m, j\le n} g_{i,j}s_it_j
\lesssim \sqrt{\gamma} \Bigl(a^{(2-q^*)/2}n^{1/p^*}+\sqrt{\Log (m/l)}\,l^{1/q}\Bigr).
\end{align*}
\end{lem}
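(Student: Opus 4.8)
The strategy is to reuse the proof of Lemma~\ref{lem:viaChevet} almost verbatim, replacing the bound $\Ex\sup_{t\in T}\sum_j g_jt_j\le\sqrt{p^*}n^{1/p^*}$ (which is fine, since $p^*\le\gamma$) and, more importantly, replacing the crude estimate for $\Ex\sup_{s\in S}\sum_i g_is_i$ by something that exploits the constraint $|\supp(s)|\le l$ more efficiently when $l$ is smaller than $m$. The point of the present lemma, compared to Lemma~\ref{lem:viaChevet}, is to trade the factor $\sqrt{\Log m}$ for $\sqrt{\Log(m/l)}$, at the cost of absorbing everything into a single $\sqrt{\gamma}$ (which is harmless since $p^*,q\le\gamma$ and hence $\sqrt{q^*}\le\sqrt2\le\sqrt\gamma$ etc.).

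\textbf{Key steps.} First, as in the proof of Lemma~\ref{lem:viaChevet}, I bound $\sup_{t\in T}\|t\|_2\le n^{(1/p^*-1/2)\vee0}=1$ (here using $p^*\ge2$, so $B_p^n\subset B_2^n$), $\sup_{s\in S}\|s\|_2\le a^{(2-q^*)/2}$, and $\Ex\sup_{t\in T}\sum_j g_jt_j\le\|g_1\|_{p^*}n^{1/p^*}\le\sqrt{p^*}\,n^{1/p^*}\le\sqrt\gamma\,n^{1/p^*}$. The only genuinely new ingredient is the estimate
\[
\Ex\sup_{s\in S}\sum_{i=1}^m g_is_i\le\Ex\sup_{I\subset[m],\,|I|\le l}\Bigl(\sum_{i\in I}|g_i|^q\Bigr)^{1/q}=\Ex\Bigl(\sum_{i\le l}(g_i^*)^q\Bigr)^{1/q}\lesssim\sqrt q\,l^{1/q}+\sqrt{\Log(m/l)}\,l^{1/q}\lesssim\sqrt\gamma\bigl(1+\sqrt{\Log(m/l)}\bigr)l^{1/q},
\]
where $g_1^*\ge\dots\ge g_m^*$ is the decreasing rearrangement of $|g_1|,\dots,|g_m|$. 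The middle equality is just the observation that the best choice of $I$ picks the $l$ largest $|g_i|$. For the inequality I would split $\bigl(\sum_{i\le l}(g_i^*)^q\bigr)^{1/q}\le(g_1^*)^{(q-2)/q}\bigl(\sum_{i\le l}(g_i^*)^2\bigr)^{1/q}$ — no, more simply: the standard estimate for order statistics of Gaussians gives $\Ex g_i^*\lesssim\sqrt{\Log(m/i)}$ for $i\le m/2$ and $\Ex g_i^*\lesssim1$ for $i>m/2$, and $g_i^*$ concentrates; combining via the triangle inequality in $L_q$ (recalling $q\le\gamma$) one gets $\bigl\|\bigl(\sum_{i\le l}(g_i^*)^q\bigr)^{1/q}\bigr\|_{q}\lesssim\sqrt\gamma\,l^{1/q}+\bigl(\sum_{i\le l}(\sqrt{\Log(m/i)})^q\bigr)^{1/q}$, and the last sum is $\lesssim l^{1/q}\sqrt{\Log(m/l)}$ because $\Log(m/i)\le\Log(m/1)$ is not good enough but $\sum_{i\le l}(\Log(m/i))^{q/2}\lesssim l(\Log(m/l))^{q/2}$ follows from the integral comparison $\int_0^l(\Log(m/x))^{q/2}dx\lesssim l(\Log(m/l))^{q/2}$ valid for $l\le m/e$ (and for $l>m/e$ the term $\sqrt{\Log(m/l)}$ is $O(1)$ and absorbed into $\sqrt\gamma$). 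Then Chevet's inequality \eqref{eq:Chevet} combines the four bounds into
\[
\Ex\sup_{s\in S,t\in T}\sum_{i,j}g_{i,j}s_it_j\lesssim a^{(2-q^*)/2}\cdot\sqrt\gamma\,n^{1/p^*}+1\cdot\sqrt\gamma\bigl(1+\sqrt{\Log(m/l)}\bigr)l^{1/q}\lesssim\sqrt\gamma\bigl(a^{(2-q^*)/2}n^{1/p^*}+\sqrt{\Log(m/l)}\,l^{1/q}\bigr),
\]
absorbing the stray $\sqrt\gamma\,l^{1/q}$ into $\sqrt\gamma\sqrt{\Log(m/l)}\,l^{1/q}$ using $\Log(m/l)\ge1$.

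\textbf{Main obstacle.} The only nontrivial point is the order-statistics bound $\bigl\|\bigl(\sum_{i\le l}(g_i^*)^q\bigr)^{1/q}\bigr\|_q\lesssim\sqrt\gamma\,l^{1/q}\sqrt{\Log(m/l)}$; everything else is a routine repetition of the proof of Lemma~\ref{lem:viaChevet} with $\sqrt{p^*},\sqrt{q},\sqrt{q^*}$ all dominated by $\sqrt\gamma$. I expect the cleanest route to that bound is: write $\sup_{|I|\le l}\bigl(\sum_{i\in I}|g_i|^q\bigr)^{1/q}=\bigl(\sum_{i\le l}(g_i^*)^q\bigr)^{1/q}\le\bigl(\sum_{i\le l}(g_i^*)^q\bigr)^{1/q}$, use the tail bound $\Pr(g_i^*>t)\le\binom{m}{i}e^{-it^2/2}\le(em/i)^ie^{-it^2/2}$, which gives $\|g_i^*\|_{q}\lesssim\sqrt{\Log(m/i)}+\sqrt q\lesssim\sqrt\gamma\sqrt{\Log(m/i)}$ for all $i\le m$ once $q\le\gamma$, and then apply the triangle inequality in $L_{q}$ followed by the elementary sum estimate $\sum_{i=1}^l(\Log(m/i))^{q/2}\le l(\Log(m/l))^{q/2}\cdot C^{q/2}$ coming from $\int_0^l(\Log(m/x))^{q/2}\,dx$. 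This is standard but should be written out with a little care about the regime $l>m/e$, where one simply discards $\Log(m/l)$ and uses $\sqrt\gamma\,l^{1/q}$ directly. No new tools beyond \eqref{eq:Chevet}, \eqref{eq:maxk}-type reasoning, and Gaussian concentration are needed.
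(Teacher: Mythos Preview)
Your approach matches the paper's exactly: proceed as in Lemma~\ref{lem:viaChevet}, bound $\sqrt{p^*}\le\sqrt\gamma$, and replace the crude estimate on $\Ex\sup_{|I|\le l}(\sum_{i\in I}|g_i|^q)^{1/q}$ by the sharper $\lesssim\sqrt{\gamma\vee\Log(m/l)}\,l^{1/q}$, which the paper simply cites from \cite[Lemmas~19 and~23]{LSChevet} rather than reproving. Your direct order-statistics argument is essentially correct, but the intermediate claim $\sum_{i\le l}(\Log(m/i))^{q/2}\le C^{q/2}l(\Log(m/l))^{q/2}$ is false when $\Log(m/l)\ll q$ (the incomplete-Gamma computation produces an extra term of order $l\,q^{q/2}$); this only adds a harmless $\sqrt q\,l^{1/q}\le\sqrt\gamma\,l^{1/q}$ after taking the $1/q$-th root, so your final bound is unaffected.
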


\begin{proof}
We proceed as in the previous proof, observing that $\sqrt{p^*}\leq  \sqrt{\gamma}$ and, 
 by \cite[Lemmas 19 and 23]{LSChevet},
\[
\Ex\sup_{I\subset [m],|I|\leq l}\Bigl(\sum_{i\in I}|g_i|^q\Bigr)^{1/q}
 \lesssim \sqrt{\gamma\vee \Log (m/l)}\,l^{1/q}.\qedhere
\]
\end{proof}

The next proposition is a consequence of the $\ell_2^n\to \ell_2^m$ bound from \cite{La}.

\begin{lem}
\label{lem:vial2tol2}
Let $(X_{i,j})_{i\leq m,j\leq n}$, be  be iid symmetric random variables satisfying \eqref{alphareg}  with $\alpha \ge  \sqrt{2}$ and $\Ex X_{i,j}^2=1$. Then for $M>0$,
\[
\Ex\bigl\|\bigl(X_{i,j}I_{\{|X_{i,j}|\geq M}\}\bigr)_{i\leq m,j\leq n}\bigr\|_{\ell_2^n\to\ell_2^m}
\lesssim_\alpha (\sqrt{n}+\sqrt{m})\exp\Bigl(-\frac{\ln \alpha}{10} M^{1/\log_2\alpha}\Bigr).
\]
\end{lem}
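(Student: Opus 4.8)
The plan is to apply Lata{\l}a's $\ell_2^n\to\ell_2^m$ estimate \cite[Theorem~2]{La} to the truncated matrix and thereby reduce the claim to a one-variable tail bound. Put $Y_{i,j}:=X_{i,j}I_{\{|X_{i,j}|\ge M\}}$; these are again iid and symmetric (hence centered), so \cite[Theorem~2]{La} together with $(mn)^{1/4}\le\tfrac12(\sqrt n+\sqrt m)$ gives
\[
\Ex\bigl\|(Y_{i,j})_{i\le m,j\le n}\bigr\|_{\ell_2^n\to\ell_2^m}
\lesssim(\sqrt n+\sqrt m)\|Y_{1,1}\|_2+(mn)^{1/4}\|Y_{1,1}\|_4
\le(\sqrt n+\sqrt m)\bigl(\|Y_{1,1}\|_2+\|Y_{1,1}\|_4\bigr).
\]
Hence it suffices to show $\|Y_{1,1}\|_2+\|Y_{1,1}\|_4\lesssim_\alpha\exp\bigl(-\tfrac{\ln\alpha}{10}M^{1/\beta}\bigr)$, where from now on $\beta:=\log_2\alpha\ge\tfrac12$.

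The key ingredient is a stretched-exponential tail bound for $X_{1,1}$. Iterating \eqref{alphareg} from $\|X_{1,1}\|_2=1$ yields $\|X_{1,1}\|_{2^{k+1}}\le\alpha^k$ for $k\ge1$, and consequently $\|X_{1,1}\|_\rho\le\rho^\beta$ for every $\rho\ge1$. Inserting this into Markov's inequality $\Pr(|X_{1,1}|\ge t)\le(\rho^\beta/t)^\rho$ and optimizing over $\rho$ (the minimizer is $\rho=t^{1/\beta}/e$, which is $\ge1$ as soon as $t\ge e^\beta$) gives $\Pr(|X_{1,1}|\ge t)\le\exp\bigl(-\tfrac{\beta}{e}t^{1/\beta}\bigr)$ for $t\ge e^\beta$. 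The case $M\le e^\beta$ is then trivial, since there $\|Y_{1,1}\|_2+\|Y_{1,1}\|_4\le\|X_{1,1}\|_2+\|X_{1,1}\|_4\le1+\alpha$ while $\exp\bigl(-\tfrac{\ln\alpha}{10}M^{1/\beta}\bigr)\ge\alpha^{-e/10}$, so the two sides differ only by a factor depending on $\alpha$.

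For $M\ge e^\beta$ I would bound the truncated moments by H\"older's inequality with an absolute exponent $K$ (say $K=5$):
\[
\Ex X_{1,1}^2I_{\{|X_{1,1}|\ge M\}}\le\|X_{1,1}\|_{2K}^2\,\Pr(|X_{1,1}|\ge M)^{1-1/K},\qquad
\Ex X_{1,1}^4I_{\{|X_{1,1}|\ge M\}}\le\|X_{1,1}\|_{4K}^4\,\Pr(|X_{1,1}|\ge M)^{1-1/K},
\]
and then insert $\|X_{1,1}\|_r\le r^\beta$ together with the tail bound above to obtain $\|Y_{1,1}\|_2+\|Y_{1,1}\|_4\lesssim_\alpha\exp\bigl(-\tfrac{(1-1/K)\beta}{4e}M^{1/\beta}\bigr)$. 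Choosing $K=5$ makes $\tfrac{1-1/K}{4e}=\tfrac1{5e}>\tfrac{\ln2}{10}$, and since $\tfrac{\ln\alpha}{10}=\tfrac{\beta\ln2}{10}$ this is exactly the exponent required. The main obstacle is precisely this constant bookkeeping: taking the fourth root of the fourth moment costs a factor $\tfrac14$ and the H\"older split costs the extra factor $1-1/K$ in front of $M^{1/\beta}$, so one has to verify that the exponent $\beta/e$ coming from the tail bound is comfortably larger than $\tfrac{4\ln2}{10}\beta$, which it is because $e<4$. Alternatively, one can dispense with H\"older and integrate the tail bound directly via $\Ex X_{1,1}^2I_{\{|X_{1,1}|\ge M\}}=M^2\Pr(|X_{1,1}|\ge M)+2\int_M^\infty t\,\Pr(|X_{1,1}|\ge t)\,dt$, absorbing the polynomial-in-$M$ prefactors into a slightly smaller exponent.
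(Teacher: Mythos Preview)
Your proof is correct and follows essentially the same route as the paper: apply \cite[Theorem~2]{La} to reduce to bounding $\|Y_{1,1}\|_2+\|Y_{1,1}\|_4$, use the moment growth $\|X_{1,1}\|_\rho\lesssim\rho^\beta$ coming from \eqref{alphareg}, and split into small and large $M$. The only difference is cosmetic: the paper bounds the truncated fourth moment directly via $\Ex X^4I_{\{|X|\ge M\}}\le M^{4-\rho}\Ex|X|^\rho$ and optimizes $\rho=\tfrac12 M^{1/\beta}$, whereas you first extract a tail bound by Markov and then feed it back through H\"older; the two computations are equivalent up to constant bookkeeping.
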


\begin{proof}
By \cite[Theorem~2]{La} we have
\begin{align*}
\Ex\bigl\|\bigl(X_{i,j}I_{\{|X_{i,j}|\geq M\}}\bigr)_{i\leq m,j\leq n}\bigr\|_{\ell_2^n\to\ell_2^m}
&\leq \max_{i\leq m}\Bigl(\sum_{j\leq n}\Ex X_{i,j}^2I_{\{|X_{i,j}|\geq M\}}\Bigr)^{1/2}
+\max_{j\leq n}\Bigl(\sum_{i\leq m}\Ex X_{i,j}^2I_{\{|X_{i,j}|\geq M\}}\Bigr)^{1/2}
\\
&\qquad+\Bigr(\sum_{i\leq m,j\leq n}\Ex X_{i,j}^4I_{\{|X_{i,j}|\geq M\}}\Bigr)^{1/4}.
\end{align*}

Regularity condition \eqref{alphareg} and the normalization $\|X_{i,j}\|_2=1$ yields 
$\|X_{i,j}\|_{\rho}\leq \alpha^{\log_2  \rho}$ for all $\rho\geq 1$.
Thus, for all $\rho\geq 4$,
\[
\bigl(\Ex X_{i,j}^2I_{\{|X_{i,j}|\geq M\}}\bigr)^{1/2}\leq 
\bigl(\Ex X_{i,j}^4I_{\{|X_{i,j}|\geq M\}}\bigr)^{1/4}\leq (M^{4-\rho}\Ex X_{i,j}^{\rho})^{1/4}
\leq M\Bigl(\frac{\alpha^{\log_2 \rho}}{M}\Bigr)^{\rho/4} .
\]
Let us choose $\rho:=\frac{1}{2}M^{1/\log_2\alpha}$. If $M\geq \alpha^3$, then $\rho\geq 4$, so
\[
M\Bigl(\frac{\alpha^{\log_2 \rho}}{M}\Bigr)^{\rho/4} = M\alpha^{-\rho/4}
=M\exp\Bigl(-\frac{\ln \alpha}{8} M^{1/\log_2\alpha}\Bigr)
\lesssim_\alpha \exp\Bigl(-\frac{\ln\alpha}{10} M^{1/\log_2\alpha}\Bigr).
\]
If $M\leq \alpha^3$, then
\[
\bigl(\Ex X_{i,j}^2I_{\{|X_{i,j}|\geq M\}}\bigr)^{1/2}\leq 
 \bigl(\Ex X_{i,j}^4I_{\{|X_{i,j}|\geq M\}}\bigr)^{1/4}
\leq
 (\Ex X_{ij}^4)^{1/4}\leq \alpha
\lesssim_\alpha \exp\Bigl(-\frac{\ln \alpha}{10} M^{1/\log_2\alpha}\Bigr). \qedhere
\]
\end{proof}

%%%%%%%%%%%%%%%%%%%%%%%%%%%%%%%%%%%%%%%%%%%

\subsection{Case  $p^*\gtrsim_\alpha \Log m$ or $q\gtrsim_\alpha \Log n$}	
\label{sect:above-nonmatching-logs}

\begin{prop}
Theorem~\ref{thm:rect} holds in the case $p^*\gtrsim_\alpha \Log m$ or $q\gtrsim_\alpha \Log n$.
\end{prop}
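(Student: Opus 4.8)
The plan is to reduce everything, via the duality \eqref{eq:duality}, to the single case $q\ge\kappa\Log n$ for a large constant $\kappa=\kappa(\alpha)$ of our choosing: the case $p^*\ge\kappa\Log m$ is obtained by applying the conclusion in this form to $(X_{j,i})_{j\le n,\,i\le m}$, whose number of columns is $m$ and whose ``row parameter'' is $p^*$. The lower bound is already Section~\ref{sect:lower-bounds}; by Remark~\ref{rem:symmetrization} we may take the $X_{i,j}$ symmetric, and we normalise $\Ex X_{i,j}^2=1$ and assume $\alpha\ge\sqrt2$. If in addition $q\ge\Log m$ or $p^*\ge\Log n$ then the estimate is Proposition~\ref{prop:above-log}, so we assume also $q<\Log m$ and $p^*<\Log n$; then $q\wedge\Log m=q$, $p^*\wedge\Log n=p^*$, and $\kappa\Log n\le q<\Log m$ forces $n<m$.

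Next I would split on the size of $p^*$. Fix a small $c=c(\alpha)$. If $p^*\ge c\Log n$, then $n^{1/p^*}\le e^{1/c}\lesssim_\alpha1$, so $B_p^n\subseteq e^{1/c}B_1^n$, hence $\|X\|_{\ell_p^n\to\ell_q^m}\lesssim_\alpha\max_{j\le n}\|(X_{i,j})_{i\le m}\|_q$. Taking expectations and using \eqref{eq:maxk} (the columns are iid), then \eqref{eq:comp-moments-regular} with $U=\{(s_iI_{\{j=1\}})_{i,j}\colon s\in B_{q^*}^m\}$ and $\rho=\Log n$, and finally \eqref{eq:compmomregalpha} with $\Log n/p^*\le1/c\lesssim_\alpha1$, one obtains $\Ex\|X\|_{\ell_p^n\to\ell_q^m}\lesssim_\alpha\Ex\|(X_{i,1})_{i\le m}\|_q+\sup_{s\in B_{q^*}^m}\|\sum_{i}s_iX_{i,1}\|_{p^*}$, and the first summand is at most $m^{1/q}\|X_{1,1}\|_q$. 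Both terms are dominated by the right-hand side of Theorem~\ref{thm:rect} (using $n^{1/p^*}\sim_\alpha1$), so this sub-case is done; it does not even require $\kappa$ to be large.

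It remains to treat $p^*<c\Log n$, where the hypothesis $q\ge\kappa\Log n$ with $\kappa$ large supplies the slack that makes the splitting of Subsection~\ref{sect:outline} effective. With a parameter $1\le k\le n$ to be optimised, write $B_p^n\subseteq T_1+T_2$, where $T_1=B_p^n\cap k^{-1/p}B_\infty^n$ and $T_2=\{t\in B_p^n\colon|\supp t|\le k\}$. For the sparse part I would apply Lemma~\ref{lem:S-bdd-T-bddsupp}, inequality \eqref{eq:lem-S-bdd-T-bddsupp}, with $S=B_{q^*}^m$ (so $a=1$) and $T=T_2$, which bounds $\Ex\sup_{s\in B_{q^*}^m,\,t\in T_2}\sum_{i,j}X_{i,j}s_it_j$ by $m^{1/q}\sup_{t\in B_p^n}\|\sum_j t_jX_{1,j}\|_q$ — the first term of the target — plus an error $(n\wedge k\Log n)^{\beta}k^{(1/p^*-1/2)\vee0}$.

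The spread-out part $\Ex\sup_{s\in B_{q^*}^m,\,t\in T_1}\sum_{i,j}X_{i,j}s_it_j$, in which $\|t\|_\infty\le b:=k^{-1/p}$ is small once $k$ is large, is where the real work lies. I would split $B_{q^*}^m\subseteq S_1+S_2$ analogously, with $S_1=B_{q^*}^m\cap aB_\infty^m$ and $S_2$ of support at most $l$, for suitable $a,l$. The two cross-terms with one sparse factor are treated by Lemma~\ref{lem:S-bdd-T-bddsupp} (via \eqref{eq:lem-T-bdd-S-bddsupp} when $p^*\ge2$, and via \eqref{eq:comp-moments-regular} together with a small net when $p^*<2$), their error terms being small because $a$ and $b$ are small. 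For the doubly spread-out term $(S_1,T_1)$ I would truncate $X_{i,j}$ at a level $M$: the tail part $X_{i,j}I_{\{|X_{i,j}|>M\}}$ is discarded using Lemma~\ref{lem:vial2tol2} together with $\|\cdot\|_{\ell_p^n\to\ell_q^m}\le n^{(1/2-1/p)\vee0}\|\cdot\|_{\ell_2^n\to\ell_2^m}$ (the $m$-factor drops since $q\ge2$), which becomes negligible once $M$ is a suitable power of $\Log(mn)$; the bounded part is then compared with a Gaussian matrix via Lemma~\ref{lem:comwithgauss2} and estimated by Chevet's inequality in the form of Lemma~\ref{lem:viaChevet} (in particular \eqref{eq:viaChevet-ST-bdd} when $p^*\ge2$, $l=m$, $T\subseteq bB_\infty^n$). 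I expect this last step, together with the bookkeeping, to be the main obstacle: the truncation level $M$ and the parameters $k,l,a,b$ must be chosen so that the error from the sparse part, the errors from the two mixed cross-terms, and the Gaussian/Chevet bound for the bounded doubly-spread part (which carries a factor $\lesssim M$ and the factors $a^{(2-q^*)/2}$, $b^{(2-p)/2}$) all sit below both terms of the right-hand side of Theorem~\ref{thm:rect} — which have enough room precisely because $q\gg\Log n$. Carrying out this optimisation across the sub-ranges $p^*\le2$ and $p^*>2$ is the bulk of the argument.
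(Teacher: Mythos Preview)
Your outline is considerably more involved than what is needed, and the ``bulk of the argument'' you leave open can be avoided entirely. The paper's proof is about fifteen lines and requires neither a split on the size of $p^*$, nor a decomposition of $B_p^n$, nor any truncation of the entries or Gaussian comparison.

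The point you are missing is that the side to decompose is $B_{q^*}^m$, not $B_p^n$: the hypothesis $q\ge C_0(\alpha)\Log n$ means $q^*$ is close to~$1$, and this is what should be exploited. Writing $B_{q^*}^m\subset S_1+S_2$ with $S_1=B_{q^*}^m\cap e^{-q}B_\infty^m$ and $S_2=\{s\in B_{q^*}^m:|\supp s|\le e^{qq^*}\}$, one handles the two pieces as follows. For $S_2$, since $q^*\le 4/3$ every $s\in S_2$ satisfies $\|s\|_1\le|\supp s|^{1/q}\|s\|_{q^*}\le e^{q^*}\le e^{4/3}$, so $S_2\subset e^{4/3}B_1^m$ and one is back in the $q=\infty$ case of Proposition~\ref{prop:above-log}; the resulting $\|\cdot\|_{\Log m}$ is then converted to $\|\cdot\|_q$ via \eqref{eq:compmomregalpha} at the cost of a factor $(\Log m/q)^\beta\lesssim_\alpha m^{1/q}$. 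For $S_1$, a single application of \eqref{eq:lem-S-bdd-T-bddsupp} with $T=B_p^n$, $k=n$, and $a=e^{-q}$ gives the main term $m^{1/q}\sup_{t\in B_p^n}\|\sum_j t_jX_{1,j}\|_q$ plus an error $n^{\beta+((1/p^*-1/2)\vee 0)}e^{-q(2-q^*)/2}\le n^{\beta+1/2}e^{-q/3}$, which is at most~$1$ once $C_0(\alpha)\ge 3(\beta+\tfrac12)$.

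Your sub-case $p^*\ge c\Log n$ is fine and close in spirit to the $S_2$ step above, but your plan for $p^*<c\Log n$ decomposes on the wrong side first: splitting $B_p^n$ and taking $a=1$ in \eqref{eq:lem-S-bdd-T-bddsupp} throws away precisely the factor $a^{(2-q^*)/2}$ that makes the argument work, forcing you into the elaborate secondary decomposition, truncation, and parameter optimisation you describe. All of that is unnecessary once you put the $e^{-q}$ threshold on the $s$-coordinates.
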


\begin{proof}
 Without loss of generality we may assume that $\|X_{i,j}\|_2=1$. 
 By Remark~\ref{rem:symmetrization} it suffices to assume that  $X_{i,j}$'s are symmetric and $\alpha\ge \sqrt{2}$, 
 and by duality  \eqref{eq:duality} it suffices to consider the case $q\ge C_0(\alpha) \Log n$, where
\[
C_0(\alpha)=8\gamman  = 8\log_2 \alpha.
\]  
In particular $q\ge 4$, so  $q^*\le 4/3$.
By Subsection~\ref{seq:case-leqLog} it suffices to consider the case $p^*\le \Log n$.

Define 
\[
S_1=B_{q^*}^m\cap e^{-q}B_\infty^m,
\quad S_2=\{s\in B_{q*}^m\colon |\mathrm{supp}(s)|\leq e^{qq^*}\}.
\]
Then $B_{q^*}^m \subset S_1+S_2$.

If $s\in S_2$, then 
\[
\|s\|_1\le \|s\|_{q^*} |\supp(s)|^{-1/q^*+1}\le e^{q^*}\le e^{4/3},
\]
so $S_2\subset e^{4/3} B_{\infty^*}^m$. Thus, Proposition~\ref{prop:above-log} and  \eqref{eq:compmomregalpha} imply
\begin{align*}
\Ex\sup_{s\in S_2, t \in B_{p}^n}\sum_{i\le m, j\le n} X_{i,j}s_it_j
& \lesssim 
\Ex\bigl\|(X_{i,j})_{i\leq m,j\leq n}\bigr\|_{\ell_p^n\rightarrow\ell_\infty^m}
\\ 
&\sim_\alpha
\sup_{t\in B_p^n}\Bigl\|\sum_{j\leq n}t_jX_{1,j}\Bigr\|_{\Log m}+n^{1/p^*}\|X_{1,1}\|_{ p^*}
\\ 
& \lesssim_{\alpha} 
\Bigl( 1\vee \frac {\Log m} q\Bigr)^{\beta} \sup_{t\in B_p^n}\Bigl\|\sum_{j\leq n}t_jX_{1,j}\Bigr\|_{q}
+n^{1/p^*}\|X_{1,1}\|_{ p^*}.
\end{align*}
Since the function $0< q\mapsto \frac 1{ q} \ln m +\beta \ln  q$ attains
its minimum at $ q=\ln m/\beta$, where the function's value is equal to 
$-\beta \ln(\beta/e)+\beta\ln\ln m $, we have $(\Log m/q)^\beta \lesssim_\alpha m^{1/q}$.   
Hence, the previous upper bound yields
\begin{equation}
\label{eq:cor-gtr-nonmatching-log-1}
\Ex\sup_{s\in S_2, t \in B_{p}^n}\sum_{i\le m, j\le n} X_{i,j}s_it_j 
\lesssim_\alpha m^{1/q}\sup_{t\in B_p^n}\Bigl\|\sum_{j\leq n}t_jX_{1,j}\Bigr\|_{q}
+n^{1/p^*} \sup_{s\in B_{q^*}^m}\Bigl\|\sum_{i=1}^{m} s_iX_{i,1}\Bigr\|_{ p^*}.
\end{equation}

Moreover, \eqref{eq:lem-S-bdd-T-bddsupp} from Lemma~\ref{lem:S-bdd-T-bddsupp} applied with $S=S_1$, $T=B_p^n$, $a=e^{-q}$,  and $k=n$, together with the inequality $q^*\le 4/3$, implies that
\begin{equation}
\label{eq:cor-gtr-nonmatching-log-2}
\Ex\sup_{s\in S_1, t \in B_{p}^n}\sum_{i\le m, j\le n} X_{i,j}s_it_j 
\lesssim_\alpha 
m^{1/q}\sup_{t\in B_p^n}\Bigl\|\sum_{j\leq n}t_jX_{1,j}\Bigr\|_{q}
+ n^{\beta+ ((1/p^*-1/2)\vee 0)} e^{-q/3}.
\end{equation}
Since $q\geq C_0(\alpha)\Log n\geq  3(\beta+1/2)\ln n$ 
 and $\|X_{1,1}\|_{p^*}\gtrsim_\alpha \|X_{1,1}\|_2=1$, 
inequalities \eqref{eq:cor-gtr-nonmatching-log-1} and \eqref{eq:cor-gtr-nonmatching-log-2} 
yield the assertion.
\end{proof}

%%%%%%%%%%%%%%%%%%%%%%%%%%%%%%%%%%%%%%%%%%%

\subsection{Case  $p^*,q\ge 3$}	
\label{sect:above-4}

 By Subsection~\ref{seq:case-leqLog} we may assume that $p^*\leq \Log n$ and $q\leq \Log m$. 
In this subsection we restrict ourselves to to the case  $p^*,q\ge 3$. However, 
similar proofs work also in the range $p^*, q\ge 2+\ve$, where $\ve >0$ is arbitrary --- in this 
case the constants in upper bounds depend also on $\ve$ and blow up when $\ve$ approaches $0$. 
 If $p^*$ or $q$ lies above and close to $2$, then we need 
different arguments, which we show in next subsections.

\begin{lem}	
\label{lem:ST-bdd}
Assume that $3\leq p^*, q \leq \Log(mn) $, $(X_{i,j})_{i\leq m,j\leq n}$ are iid symmetric
random variables satisfying \eqref{alphareg} with $\alpha \ge  \sqrt{2}$, 
$\Ex X_{i,j}^2=1$, $S\subset B_{q^*}^m \cap \Log^{-8\gamman}(mn) B_\infty^m$, and 
$T\subset B_p^n \cap \Log^{-8\gamman}(mn) B_\infty^n$, where $\gamman= \log_2 \alpha$. Then 
\[
\Ex\sup_{s\in S, t \in T} \sum_{i\le m, j\le n} X_{i,j}s_it_j 
\lesssim  m^{1/q}+ n^{1/p^*}.
\]
\end{lem}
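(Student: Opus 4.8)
The plan is to reduce the problem to the Gaussian case via Lemma~\ref{lem:comwithgauss}, and then to apply the specialized Chevet-type bound \eqref{eq:viaChevet-ST-bdd} from Lemma~\ref{lem:viaChevet}, exploiting the smallness of the $\ell_\infty$-radius $b=a=\Log^{-8\gamman}(mn)$ to kill the logarithmic loss coming from the comparison with Gaussians. Concretely, since $S\subset B_{q^*}^m$ and $T\subset B_p^n$, Lemma~\ref{lem:comwithgauss} gives
\[
\Ex\sup_{s\in S, t \in T} \sum_{i\le m, j\le n} X_{i,j}s_it_j
\lesssim \Log^\gamman(mn)\,\Ex\sup_{s\in S, t \in T} \sum_{i\le m, j\le n} g_{i,j}s_it_j.
\]
So it suffices to show the Gaussian quantity is $\lesssim \Log^{-\gamman}(mn)(m^{1/q}+n^{1/p^*})$.

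For the Gaussian bound I would invoke \eqref{eq:viaChevet-ST-bdd} with $l=m$, with $a=b=\Log^{-8\gamman}(mn)$: since $p^*\ge 3\ge 2$ and $q\ge 3$, that lemma applies (after noting $S\subset\{|\supp(s)|\le m\}\cap aB_\infty^m$ trivially and $T\subset bB_\infty^n$ by hypothesis), yielding
\[
\Ex\sup_{s\in S, t \in T} \sum_{i\le m, j\le n} g_{i,j}s_it_j
\lesssim \sqrt{p^*}\,a^{(2-q^*)/2}n^{1/p^*}+\sqrt{q}\,b^{(2-p)/2}m^{1/q}.
\]
Now the point is a book-keeping on exponents. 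Since $q\ge 3$ we have $q^*\le 3/2$, so $(2-q^*)/2\ge 1/4$, hence $a^{(2-q^*)/2}\le a^{1/4}=\Log^{-2\gamman}(mn)$; likewise $p\le 3/2$ gives $b^{(2-p)/2}\le b^{1/4}=\Log^{-2\gamman}(mn)$. Since moreover $p^*,q\le \Log(mn)$, we get $\sqrt{p^*}\le \Log^{1/2}(mn)$ and $\sqrt{q}\le\Log^{1/2}(mn)$. Therefore each term is at most $\Log^{1/2-2\gamman}(mn)\,(n^{1/p^*}\vee m^{1/q})\le \Log^{-\gamman}(mn)(m^{1/q}+n^{1/p^*})$, using $\gamman\ge 1/2$ so that $1/2-2\gamman\le -\gamman$. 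Combining with the factor $\Log^\gamman(mn)$ from the first display gives the claim.

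The only subtle point — and the step I would double-check carefully — is that the exponent arithmetic genuinely leaves room to absorb the $\Log^\gamman(mn)$ comparison factor: we need $\gamman + 1/2 - 2\gamman\,(2-q^*)/2 \le 0$ and similarly with $p$. With $q\ge 3$ (so $(2-q^*)/2\ge 1/4$) and $8\gamman$ as the chosen exponent of $\Log$ in $a$, the loss is $\gamman+1/2-8\gamman\cdot\frac14 = \gamman+1/2-2\gamman = 1/2-\gamman\le 0$, which is exactly why the exponent $8\gamman$ is built into the hypothesis on $S$ and $T$. The role of the lower bounds $p^*,q\ge 3$ is purely to make $(2-q^*)/2$ and $(2-p)/2$ bounded below by a positive absolute constant; as the remark before the lemma indicates, any $p^*,q\ge 2+\ve$ would work with a worse absolute constant, and correspondingly a larger exponent than $8\gamman$. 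I do not expect any real obstacle beyond keeping the chain of inequalities on exponents straight.
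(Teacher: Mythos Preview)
Your proof is correct and follows exactly the same approach as the paper: combine Lemma~\ref{lem:comwithgauss} with the Chevet-type bound \eqref{eq:viaChevet-ST-bdd}, then use $p^*,q\ge 3$ to get $(2-q^*)/2,(2-p)/2\ge 1/4$ and $p^*,q\le\Log(mn)$ to bound $\sqrt{p^*},\sqrt{q}$, so that the exponent arithmetic $\gamman+\tfrac12-8\gamman\cdot\tfrac14\le 0$ (using $\gamman\ge\tfrac12$) absorbs the logarithmic loss. The paper's write-up is just a more compressed version of the same computation.
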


\begin{proof}

Lemma \ref{lem:comwithgauss} and inequality \eqref{eq:viaChevet-ST-bdd} yield
\begin{align}
\Ex\sup_{s\in S, t \in T} \sum_{i\le m, j\le n} X_{i,j}s_it_j
\lesssim 
\label{lem-both-bdd-upper-3}
\Log^{1/2+\gamman}(mn) \bigl(m^{1/q} \Log^{-4\gamman(2-p)}(mn)+ n^{1/p^*} \Log^{-4\gamman(2-q^*)}(mn)\bigr).
\end{align}

Since $p^*\ge 3$, $(2-p)\ge 1/2$, so
\[
 \Log^{-4\gamman(2-p)}(mn) 
\le \Log^{-2\gamman}(mn) \leq \Log^{-\gamman-1/2}(mn),
\] 
and similarly 
\[
 \Log^{-4\gamman(2-q^*)}(mn) 
\le \Log^{-\gamman-1/2}(mn),
\] 
This together with bound  \eqref{lem-both-bdd-upper-3} implies the assertion.
\end{proof}

Now we are ready to prove  the upper bound in Theorem~\ref{thm:rect} in the case when $p^*,q$ 
are separated from $2$.

\begin{prop}	\label{prop:q-above-4}
Let $(X_{i,j})_{i\leq m,j\leq n}$ be iid symmetric random variables such that \eqref{alphareg} holds with $\alpha\ge  \sqrt{2}$.
Then  the upper bound in Theorem~\ref{thm:rect}  holds whenever 
$3\leq q\leq \Log m$ and $3\leq p^* \leq \Log n$.
\end{prop}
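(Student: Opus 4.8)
The plan is as follows. By Remark~\ref{rem:symmetrization} we may assume that the $X_{i,j}$ are symmetric, and after rescaling that $\|X_{i,j}\|_2=1$; recall that $\alpha\ge\sqrt2$ and set $\gamman=\log_2\alpha$. In the present range $q\wedge\Log m=q$ and $p^*\wedge\Log n=p^*$, so the right-hand side of Theorem~\ref{thm:rect} equals $R:=m^{1/q}\sup_{t\in B_p^n}\|\sum_{j}t_jX_{1,j}\|_q+n^{1/p^*}\sup_{s\in B_{q^*}^m}\|\sum_{i}s_iX_{i,1}\|_{p^*}$, and testing with $t=e_1$ and $s=e_1$ together with $\|X_{1,1}\|_q,\|X_{1,1}\|_{p^*}\ge\|X_{1,1}\|_2=1$ shows $R\gtrsim m^{1/q}+n^{1/p^*}$. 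It thus suffices to prove $\Ex\sup_{s\in B_{q^*}^m,\,t\in B_p^n}\sum_{i,j}X_{i,j}s_it_j\lesssim_\alpha R$.

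I would then apply the splitting scheme of Subsection~\ref{sect:outline} to both $B_p^n$ and $B_{q^*}^m$, at $\ell_\infty$-scales chosen polylogarithmically in $mn$ (and tuned to $\gamman$): $B_p^n\subset T_1+T_2$ and $B_{q^*}^m\subset S_1+S_2$, where $T_1,S_1$ are the $\ell_\infty$-bounded parts and $T_2,S_2$ are the bounded-support parts. Expanding $(s_1+s_2)(t_1+t_2)$ and using subadditivity of the supremum, $\Ex\sup_{s,t}\sum X_{i,j}s_it_j$ is dominated by the four pieces $\Ex\sup_{s\in S_a,\,t\in T_b}\sum X_{i,j}s_it_j$, $a,b\in\{1,2\}$. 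On $S_1\times T_1$, once the scales are at most $\Log^{-8\gamman}(mn)$ the hypotheses of Lemma~\ref{lem:ST-bdd} are met (here $3\le p^*,q\le\Log(mn)$), and that lemma gives $\lesssim m^{1/q}+n^{1/p^*}\lesssim R$.

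For $S_1\times T_2$ I would use \eqref{eq:lem-S-bdd-T-bddsupp} of Lemma~\ref{lem:S-bdd-T-bddsupp} (valid since $q\ge2$), and dually \eqref{eq:lem-T-bdd-S-bddsupp} for $S_2\times T_1$ (valid since $p^*\ge2$); because $p^*,q\ge3$ the exponents $(1/p^*-1/2)\vee0$ and $(1/q-1/2)\vee0$ vanish, so these estimates yield a main term $\lesssim_\alpha m^{1/q}\sup_{t\in B_p^n}\|\sum_j t_jX_{1,j}\|_q\le R$ (resp.\ $\lesssim_\alpha n^{1/p^*}\sup_{s\in B_{q^*}^m}\|\sum_i s_iX_{i,1}\|_{p^*}\le R$) plus an error term which is a fixed power of $\Log(mn)$ times a factor coming from the $\ell_\infty$-scales. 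On $S_2\times T_2$, where both vectors have polylogarithmic support and coordinates bounded by $1$, I would bound the piece either by Lemma~\ref{lem:S-bdd-T-bddsupp} with $\ell_\infty$-cap $1$, or by Lemma~\ref{lem:comwithgauss} followed by Chevet's inequality in the form of Lemma~\ref{lem:viaChevet2} (applicable since $p^*,q$ are bounded here), again producing a main term of the allowed shape plus a polylogarithmic error.

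The heart of the matter — and the step I expect to be the main obstacle — is to choose the decomposition scales (in fact a finite increasing sequence of scales, since a single choice leads to a seesaw: shrinking the $S$-cap shrinks the $S_1\times T_2$ error but enlarges the support of $S_2$ and hence the $S_2\times T_1$ error, and symmetrically) so that all the error terms above are $\lesssim_\alpha R$. Here one exploits the standing restrictions $q\le\Log m$, $p^*\le\Log n$ — and the reductions of Subsections~\ref{seq:case-leqLog} and~\ref{sect:above-nonmatching-logs}, which permit the additional assumptions $p^*<C(\alpha)\Log m$ and $q<C(\alpha)\Log n$ — so that $m$ and $n$ are at least $e^{c(\alpha)(p^*+q)}$, together with the elementary lower bounds $\sup_{t\in B_p^n}\|\sum_j t_jX_{1,j}\|_q\gtrsim_\alpha (q\wedge n)^{1/p^*}$ and its dual (obtained by testing on vectors spread over $q\wedge n$, resp.\ $p^*\wedge m$, coordinates, cf.\ Lemma~\ref{lem:momlogconcave}). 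The resulting case analysis — according in particular to whether the support bounds of $T_2$ and $S_2$ exceed $n$ and $m$ — is where essentially all of the work lies; the rest is a routine application of the tools collected in Subsection~\ref{sect:tools}.
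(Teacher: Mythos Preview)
Your overall strategy---split both balls into an $\ell_\infty$-bounded part plus a bounded-support part, use Lemma~\ref{lem:ST-bdd} on the doubly bounded piece and Lemma~\ref{lem:S-bdd-T-bddsupp} on the cross terms, then balance errors via a case analysis on the size of $p^*,q$---is the paper's. The paper, however, sidesteps the seesaw you anticipate with a trick you do not mention. It performs a \emph{first} split at the scales $e^{-q}$ for $B_{q^*}^m$ and $e^{-p^*}$ for $B_p^n$: the bounded-support piece $S_2=\{s\in B_{q^*}^m:|\supp s|\le e^{qq^*}\}$ then satisfies $\|s\|_1\le|\supp s|^{1/q}\|s\|_{q^*}\le e^{q^*}<5$ (here $q\ge3$ gives $q^*\le3/2$), so $S_2\subset 5B_1^m$ and the entire $S_2\times B_p^n$ contribution is dominated by $5\,\Ex\|(X_{i,j})\|_{\ell_p^n\to\ell_\infty^m}$, which is already handled by Proposition~\ref{prop:above-log}; passing from $\|\cdot\|_{\Log m}$ back to $\|\cdot\|_q$ costs only $(\Log m/q)^\beta\lesssim_\alpha m^{1/q}$, exactly as in the derivation of~\eqref{eq:cor-gtr-nonmatching-log-1}. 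Dually $T_2\subset 5B_1^n$. Thus both bounded-support pieces deliver the full bound $R$ \emph{with no error term to balance}, and only the single piece $S_1\times T_1$ (now with $S_1\subset e^{-q}B_\infty^m$, $T_1\subset e^{-p^*}B_\infty^n$) remains. That piece is then handled by a second split at the polylogarithmic scale $\Log^{-8\gamman}(mn)$ and a three-case split on whether $q$ and/or $p^*$ exceed $60\gamman^2\Log\Log(mn)$---essentially the case analysis you outline, but much lighter since only one piece carries it.

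Your plan of iterating polylogarithmic scales on all four pieces could presumably be pushed through, but the reduction-to-$\ell_1$ trick is what keeps the paper's argument short. One caveat: your parenthetical ``applicable since $p^*,q$ are bounded here'' for Lemma~\ref{lem:viaChevet2} on $S_2\times T_2$ is not literally true---in this proposition $p^*,q$ range up to $\Log n,\Log m$---so that route is available only after you have already restricted to the small-$p^*,q$ subcase.
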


\begin{proof}
Without loss of generality we assume that $\Ex X_{i,j}^2=1$ and that $q\ge p^*$ (the opposite case follows by duality   \eqref{eq:duality}).

 Recall that $\gamman=\log_2\alpha\geq 1/2$
and let us consider the following subsets of balls  $B_{q^*}^m$ and $B_p^n$:
\[
S_1=B_{q^*}^m\cap e^{-q}B_\infty^m,
\quad S_2=\{s\in B_{q*}^m\colon |\mathrm{supp}(s)|\leq e^{qq^*}\},
\]
\[
S_3=B_{q^*}^m\cap \Log^{- 8\gamman}(mn) B_\infty^m,
\quad S_4=\{s\in B_{q*}^m\colon |\mathrm{supp}(s)|\leq \Log^{ 8\gamman q^*}(mn)\},
\]
\[
T_1=B_p^n\cap e^{-p^*}B_\infty^n,\quad 
T_2=\{t\in B_p^n\colon |\mathrm{supp}(t)|\leq e^{pp^*}\},
\]
and
\[
T_3=B_p^n\cap  \Log^{- 8\gamman}(mn)B_\infty^n,\quad 
T_4=\{t\in B_p^n\colon |\mathrm{supp}(t)| \leq \Log^{  8\gamman p}(mn)\}.
\]
Note that $B_{q^*}^m\subset S_1+S_2$, $B_{q^*}^m\subset S_3+S_4$, $B_{p}^n\subset T_1+T_2$, and $B_{p}^n\subset T_3+T_4$. In particular
\begin{align}
\label{eq:prop-above4-0} 
\bigl\|&(X_{i,j})_{i\leq m,j\leq n}\bigr\|_{\ell_p^n\rightarrow\ell_q^m}
 = \sup_{s\in B_{q^*}^m, t \in B_{p}^n}\sum_{i\le m, j\le n} X_{i,j}s_it_j 
\\
\notag
&\le \sup_{s\in S_1, t \in T_1}\sum_{i\le m, j\le n} X_{i,j}s_it_j 
 + \sup_{s\in S_2, t \in B_{p}^n}\sum_{i\le m, j\le n} X_{i,j}s_it_j +
\sup_{s\in B_{q^*}^m, t \in T_2}\sum_{i\le m, j\le n} X_{i,j}s_it_j.
\end{align}

If $s\in S_2$, then 
\[
\|s\|_1\le \|s\|_{q^*} |\supp(s)|^{-1/q^*+1}\le e^{q^*}\le e^{3/2}<5,
\]
so $S_2\subset 5B_1^m =5B_{\infty^*}^m$ 
and we may proceed as in the proof of \eqref{eq:cor-gtr-nonmatching-log-1} to get
\begin{equation}
\label{eq:prop-above4-1}
\Ex\sup_{s\in S_2, t \in B_{p}^n}\sum_{i\le m, j\le n} X_{i,j}s_it_j 
\lesssim_\alpha 
m^{1/q}\sup_{t\in B_p^n}\Bigl\|\sum_{j\leq n}t_jX_{1,j}\Bigr\|_{q}
+n^{1/p^*} \sup_{s\in B_{q^*}^m}\Bigl\|\sum_{i=1}^{m} s_iX_{i,1}\Bigr\|_{ p^*}
\end{equation}
and, by duality,
\begin{equation}
\label{eq:prop-above4-2}
\Ex\sup_{s\in B_{q^*}^m, t \in T_2}\sum_{i\le m, j\le n} X_{i,j}s_it_j 
\lesssim_\alpha 
m^{1/q}\sup_{t\in B_p^n}\Bigl\|\sum_{j\leq n}t_jX_{1,j}\Bigr\|_{q}
+n^{1/p^*} \sup_{s\in B_{q^*}^m}\Bigl\|\sum_{i=1}^{m} s_iX_{i,1}\Bigr\|_{ p^*}.
\end{equation}

Bounds \eqref{eq:prop-above4-0}-\eqref{eq:prop-above4-2} imply that it suffices to prove that
\begin{equation}
\label{eq:prop-above4-3}
\sup_{s\in S_1, t \in T_1}\sum_{i\le m, j\le n} X_{i,j}s_it_j
\lesssim_\alpha
m^{1/q}\sup_{t\in B_p^n}\Bigl\|\sum_{j=1}^nt_jX_{1,j}\Bigr\|_{ q}
+n^{1/p^*}\sup_{s\in B_{q^*}^m}\Bigl\|\sum_{i=1}^{m} s_iX_{i,1}\Bigr\|_{ p^*}.
\end{equation}
Recall that $q\ge p^*\ge 3$. Let us consider three cases.

\textbf{Case 1}, when $q,p^*\ge  60\gamman^2\Log\Log(mn)$. 
Then $e^{-q},e^{-p^*} \le \Log^{- 8\gamman}(mn)$, 
so $S_1\subset S_3$ and $T_1\subset T_3$. 
Thus, \eqref{eq:prop-above4-3} follows by Lemma~\ref{lem:ST-bdd}.
	
\textbf{Case 2}, when $q\ge   60\gamman^2\Log\Log(mn)\ge p^*$. Then $S_1\subset S_3$ and 
$T_1\subset B_p^n \subset T_3+ T_4$, so
\[
\Ex \sup_{s\in S_1, t \in T_1}\sum_{i\le m, j\le n} X_{i,j}s_it_j 
\le  \Ex \sup_{s\in S_3, t \in T_3}\sum_{i\le m, j\le n} X_{i,j}s_it_j 
+ \Ex  \sup_{s\in S_1, t \in  T_4}\sum_{i\le m, j\le n} X_{i,j}s_it_j .
\] 
The first term on the right-hand side may be  bounded properly by Lemma~\ref{lem:ST-bdd}. 
In order to estimate the second term we apply \eqref{eq:lem-S-bdd-T-bddsupp} from 
Lemma~\ref{lem:S-bdd-T-bddsupp}  with $a=e^{-q}$
and $k=\lfloor \Log^{ 12\gamman }(mn)  \rfloor\ge \lfloor \Log^{8\gamman p}(mn)  \rfloor$ (the 
inequality follows by $p\le 3^*=\frac 32$) to get
\[
\Ex \sup_{s\in S_1, t \in  T_4}\sum_{i\le m, j\le n} X_{i,j}s_it_j
\lesssim_\alpha m^{1/q} \sup_{t\in  T_4} \Bigl\|\sum_{j=1}^n X_{1,j}t_j \Bigr\|_q + 
(\Log^{14\gamman} (mn))^\gamman e^{-q(2-q^*)/2}.
\]
Since $q^*\le 3^*=3/2$, we have 
\[
(\Log^{14\gamman} (mn))^\gamman e^{-q(2-q^*)/2} \le  \Log^{14\gamman^2} (mn) e^{-q/4} 
\le 1,
\]
so \eqref{eq:prop-above4-3} holds.
	
\textbf{Case 3}, when $  60\gamman^2 \Log\Log(mn) \ge q, p^*$. 
Since $T_1\subset T_3+T_4$ and $S_1\subset S_3+S_4$, we have
\begin{align*}
\Ex \sup_{s\in S_1, t \in T_1}\sum_{i\le m, j\le n} X_{i,j}s_it_j 
&\le  
\Ex \sup_{s\in S_3, t \in T_3}\sum_{i\le m, j\le n} X_{i,j}s_it_j 
+\Ex \sup_{s\in B_{q^*}^m, t \in T_4}\sum_{i\le m, j\le n} X_{i,j}s_it_j 
\\ 
&\qquad+ \Ex  \sup_{s\in S_4, t \in  B_{p}^n }\sum_{i\le m, j\le n} X_{i,j}s_it_j .
\end{align*}
The first term on the right-hand side may be  bounded by Lemma~\ref{lem:ST-bdd}. 
Now we estimate the second term --- the third one may be bounded similarly (by using \eqref{eq:lem-T-bdd-S-bddsupp} from Lemma~\ref{lem:S-bdd-T-bddsupp} instead of \eqref{eq:lem-S-bdd-T-bddsupp}). 
By \eqref{eq:lem-S-bdd-T-bddsupp} applied with $a=1$ and 
$k=\lfloor \Log^{12\gamman }(mn)  \rfloor\ge \lfloor \Log^{8\gamman p}(mn)  \rfloor$ 
we have
\begin{equation*}
\Ex \sup_{s\in B_{q^*}^m, t \in T_4}\sum_{i\le m, j\le n} X_{i,j}s_it_j 
\lesssim_\alpha 
m^{1/q} \sup_{t\in  T_4} \Bigl\|\sum_{j=1}^n X_{1,j}t_j \Bigr\|_q 
+ \Log^{14\gamman^2} (mn).
\end{equation*}

For a fixed $\gamman= \log_2 \alpha\ge  1/2$ there exists $C(\gamman)\ge 3$ such that 
for every $x\ge C(\gamman) \eqqcolon C_0(\alpha)$ we have 
$28\gamman^2 \ln x\le x/(60\gamman^2\ln x)$.
Hence, if $mn\ge e^{C_0(\alpha)}$ and $p^*\le q\le 60\gamman^2 \Log\Log(mn)$, then 
\[
14\gamman^2 \ln \Log (mn) \le \frac 12 \ln (mn)/q \le \frac 12 (\ln m /q +\ln n/p^*) 
\le \max\{\ln m /q ,\ln n/p^* \},
\]
so for every $m,n\in \en$,
\[
\Log^{ 14\gamman^2}(mn)\lesssim_\alpha  \max\{m^{1/q},n^{1/p^*} \},
\]
and \eqref{eq:prop-above4-3} follows.
\end{proof}

%%%%%%%%%%%%%%%%%%%%%%%%%%%%%%%%%%%%%%%%%%%%%%%%%%%%%%%%%
\subsection{Case $q\geq  24\gamman\geq 3\geq p^*$} \label{sect:qge8betage3gep*}

In this subsection we assume (without loss of generality -- see Remark~\ref{rem:symmetrization}) 
that $X_{i,j}$ are iid symmetric random variables  satisfying \eqref{alphareg} with 
$\alpha\ge  \sqrt{2}$. We also use the notation $\gamman=\log_2 \alpha\ge  1/2$. 
In particular $24\gamman \geq 3$  and $q^*\leq 3/2$ whenever $q\ge 24\gamman$.
Once we prove the upper bound in the case $q\ge  24\gamman \geq p^*$, the upper bound in the case $p^*\geq  24\gamman \geq q$ follows by duality   \eqref{eq:duality}. 
By Subsections~\ref{seq:case-leqLog} and \ref{sect:above-nonmatching-logs} it suffices to consider the case $\Log m \wedge C(\alpha) \Log n\ge q$.
In this case Theorem~\ref{thm:rect} follows by the following two lemmas.

\begin{lem}
\label{lem:obs1}
If $\Log m \geq q\geq 3\geq p^*$,
$n^{1/3}\geq m^{1/q}q^\beta$, and $\|X_{1,1}\|_2=1$, then 
\[
\Ex\|(X_{i,j})_{i\le m, j\le n}\|_{\ell_p^n\to\ell_q^{m}}\lesssim n^{1/p^*}.
\]
\end{lem}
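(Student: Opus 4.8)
The plan is to bound the operator norm $\|X\|_{\ell_p^n\to\ell_q^m}$ by the sum of contributions coming from two natural decompositions of the unit ball $B_p^n$ on the ``$t$-side''. Since $p^*\le 3$ we have $p\ge 3/2\ge 1$, and since $q\ge 3\ge p^*$, the target space $\ell_q^m$ is ``far from $\ell_1$''. First I would reduce to symmetric $X_{i,j}$ via Remark~\ref{rem:symmetrization} (already permitted) and normalize $\|X_{i,j}\|_2=1$. The key point is that $q^*\le 3/2$, so $B_{q^*}^m$ is close to $B_1^m$; more precisely any $s$ with small support has small $\ell_1$-norm, and the operator norm into $\ell_q^m$ with $q\ge\Log m$ behaves like the $\ell_\infty^m$-norm of rows (cf.\ the computation leading to \eqref{eq:cor-gtr-nonmatching-log-1} and Proposition~\ref{prop:above-log}).

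The main step is to split $B_p^n$ on the $t$-side. Write $B_p^n\subset T_1+T_2$ with $T_1=B_p^n\cap e^{-q}B_\infty^n$ and $T_2=\{t\in B_p^n:|\supp t|\le e^{qp}\}$ (using the decomposition scheme of Subsection~\ref{sect:outline} with $a=e^{-q/p}$, noting $a^{-p}=e^q$; here I use $p\ge 3/2$ so $qp\ge q$). On $T_2$ the vectors have bounded support, so I can apply \eqref{eq:lem-T-bdd-S-bddsupp} from Lemma~\ref{lem:S-bdd-T-bddsupp} with roles adjusted (or dualize): the first term is controlled by $n^{1/p^*}\sup_{s\in B_{q^*}^m}\|\sum_i s_iX_{i,1}\|_{p^*}$, which since $p^*\le 3\le 4$ is $\sim_\alpha n^{1/p^*}\|X_{1,1}\|_{p^*}\lesssim_\alpha n^{1/p^*}$ by \eqref{eq:compmomregalpha} and $\|X_{1,1}\|_2=1$; the residual term is a power of $(\,\cdot\,)^\beta$ times $m^{1/q}$ which, after the support bound $l\le e^{qq^*}\le e^{3q/2}$, is at most $\Log^{O(\beta)}(mn)\cdot m^{1/q}$, and this is $\lesssim_\alpha n^{1/p^*}$ precisely by the hypothesis $n^{1/3}\ge m^{1/q}q^\beta$ together with $\Log m\ge q$. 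On $T_1$, the vectors have $\ell_\infty$-norm at most $e^{-q}$, which is much smaller than any polylogarithmic threshold; here I apply \eqref{eq:lem-T-bdd-S-bddsupp} from Lemma~\ref{lem:S-bdd-T-bddsupp} (with $S=B_{q^*}^m$, i.e.\ $l=m$) to bound the contribution by $n^{1/p^*}\sup_{s}\|\sum_i s_i X_{i,1}\|_{p^*}+\bigl(m\wedge(m\Log m)\bigr)^\beta\, b^{(2-p)/2}$ with $b=e^{-q}$, and the second term is $m^\beta e^{-q(2-p)/2}\le m^\beta e^{-q/4}$ since $p\le 3/2\le 2$ gives $2-p\ge 1/2$; this is $\le 1\le n^{1/p^*}$ once $q\ge 4\beta\Log m$, which holds in the regime under consideration (or, if not, one absorbs it using $\Log m\ge q$ plus the hypothesis as before, exactly as in the proof of the proposition in Subsection~\ref{sect:above-nonmatching-logs}).

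The anticipated main obstacle is bookkeeping the exponents in the residual terms: one must check that every stray factor of the form $\bigl(n\wedge(k\Log n)\bigr)^\beta$ or $\Log^{O(\beta^2)}(mn)$ arising from Lemma~\ref{lem:S-bdd-T-bddsupp} is genuinely dominated by $n^{1/p^*}$ (equivalently $n^{1/3}$, since $p^*\le 3$), and the only quantitative input allowing this is the hypothesis $n^{1/3}\ge m^{1/q}q^\beta$ combined with $q\le \Log m$ (which forces $m^{1/q}\ge e$, so $n$ is genuinely large). In particular one uses $q\le\Log m$ to convert the polylog-in-$mn$ factors into polylog-in-$m$ factors and then into powers of $m^{1/q}$, and finally into powers of $n^{1/3}$. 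None of these steps is conceptually hard, but the proof requires care to make sure the implied constants depend only on $\alpha$ (through $\beta=\log_2\alpha$) and on nothing else.
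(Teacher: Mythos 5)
Your proposal takes a genuinely different route from the paper, and it does not go through as written. The paper's actual proof is a short reduction to the boundary case $p^*=3$: by Theorem~\ref{thm:rect}'s upper bound at $p^*=3\le q$ (proved in Subsection~\ref{sect:above-4}) together with \eqref{eq:compmomregalpha} and $q\le\Log m$, one gets
$\Ex\|X\|_{\ell_{3/2}^n\to\ell_q^m}\lesssim_\alpha m^{1/q}q^\beta+n^{1/3}\lesssim n^{1/3}$
using the hypothesis $n^{1/3}\ge m^{1/q}q^\beta$; then for all $p^*\le 3$ one composes with $\|\mathrm{Id}\|_{\ell_p^n\to\ell_{3/2}^n}=n^{2/3-1/p}$ and observes $n^{2/3-1/p}\cdot n^{1/3}=n^{1/p^*}$. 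No decomposition of $B_p^n$ is needed.

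Your decomposition scheme runs into several concrete problems. First, you write ``since $p\le 3/2\le 2$ gives $2-p\ge 1/2$'', but the hypothesis $p^*\le 3$ gives $p\ge 3/2$, not $p\le 3/2$; $p$ can be arbitrarily large (even $\infty$ when $p^*=1$). Second, the estimate \eqref{eq:lem-T-bdd-S-bddsupp} that you invoke for $T_1$ is stated (and proved) only under $p^*\ge 2$; the lemma also covers $p^*\in[1,2)$, which you never address. Third, even in the favorable subrange $2\le p^*\le 3$ (so $3/2\le p\le 2$), the residual term you obtain is $m^\beta b^{(2-p)/2}=m^\beta e^{-q(2-p)/2}$; as $p\to 2$ the exponent $(2-p)/2\to 0$ and the residual approaches $m^\beta$, which is far larger than $n^{1/p^*}$ for large $\beta$ (it is not hard to produce $m\sim n$, $q=3$, $\beta>2/3$ violating your bound). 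Shrinking $b$ to kill this term inflates the support threshold $b^{-p}$ for $T_2$, and then the residual $(n\wedge(k\Log n))^\beta k^{(1/p^*-1/2)\vee0}$ from \eqref{eq:lem-S-bdd-T-bddsupp} blows up, especially for $p>2$ where $k^{1/p^*-1/2}$ is an increasing power of $k=b^{-p}$. Your appeal to ``$l\le e^{qq^*}$'' also suggests a confusion between the $s$-side decomposition of Subsection~\ref{sect:above-nonmatching-logs} and the $t$-side decomposition you set up. These issues are structural, not cosmetic: the clean way out is precisely the paper's composition trick, which bypasses the decomposition and its exponent bookkeeping entirely.
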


\begin{proof}
By \eqref{eq:compmomregalpha} we get
\[
\sup_{t\in B_{3/2}^n}\Bigr\|\sum_{i=1}^mt_iX_{i,1}\Bigr\|_{q}
\leq q^\beta \sup_{t\in B_{3/2}^n}\Bigr\|\sum_{i=1}^mt_iX_{i,1}\Bigr\|_{2}
= q^\beta \sup_{t\in B_{3/2}^n}\|t\|_2=q^\beta.
\]
This together with the assumption $n^{1/3}\geq m^{1/q}q^\beta$ and the estimate in the case 
$p^*=3 \le q$ (already obtained in Subsection~\ref{sect:above-4}) gives
$\Ex\|(X_{i,j})\|_{\ell_{3/2}^n\to\ell_q^{m}}\lesssim n^{1/3}$.
Therefore, for  every $p^*\leq 3$,
\[
\Ex\|(X_{i,j})\|_{\ell_{p}^n\to\ell_q^{m}}
\leq \|\operatorname{Id}\|_{\ell_{p}^n\to\ell_{3/2}^n}\Ex\|(X_{i,j})\|_{\ell_{3/2}^n\to\ell_q^{m}}
\lesssim n^{2/3-1/p}n^{1/3}=n^{1/p^*}. \qedhere
\]
\end{proof}

\begin{lem}\label{lem:q-ge-8beta-ge-p*}
Assume that $\Log m \wedge C(\alpha) \Log n \geq q\geq  24\gamman\geq 3\geq p^*$ and 
$q^\gamman m^{1/q}\geq n^{1/3}$. Then the upper bound in Theorem~\ref{thm:rect}  holds.
\end{lem}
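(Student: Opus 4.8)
The plan is as follows. By Remark~\ref{rem:symmetrization} we may assume the $X_{i,j}$ are symmetric and $\alpha\ge\sqrt2$, and we normalise $\|X_{1,1}\|_2=1$; by Subsection~\ref{seq:case-leqLog} we may also assume $p^*\le\Log n$ and $q\le\Log m$, so that $q\wedge\Log m=q$ and $p^*\wedge\Log n=p^*$. Since $p^*\le3\le4$, condition~\eqref{alphareg} gives $\|X_{1,1}\|_{p^*}\sim_\alpha1$, so the right-hand side of Theorem~\ref{thm:rect} is comparable to $m^{1/q}\sup_{t\in B_p^n}\|\sum_jt_jX_{1,j}\|_q+n^{1/p^*}$, and this is the quantity we must bound $\Ex\|(X_{i,j})\|_{\ell_p^n\to\ell_q^m}$ by.

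First I would split $B_{q^*}^m\subset S_1+S_2$ with $S_1=B_{q^*}^m\cap e^{-q}B_\infty^m$ and $S_2=\{s\in B_{q^*}^m:|\supp s|\le e^{qq^*}\}$; since $q\ge24\gamman\ge3$ we have $q^*\le\tfrac32$, hence $S_2\subset5B_1^m$. On $S_2\times B_p^n$ one therefore has $\Ex\sup_{s\in S_2,t\in B_p^n}\sum X_{i,j}s_it_j\lesssim\Ex\|(X_{i,j})\|_{\ell_p^n\to\ell_\infty^m}$, which by Proposition~\ref{prop:above-log}, \eqref{eq:maxk} and \eqref{eq:compmomregalpha} is at most a multiple of $(\Log m/q)^\gamman\sup_{t\in B_p^n}\|\sum_jt_jX_{1,j}\|_q+n^{1/p^*}\|X_{1,1}\|_{p^*}$; as $(\Log m/q)^\gamman\lesssim_\alpha m^{1/q}$, the $S_2$-part is bounded as required. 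It then remains to estimate $\Ex\sup_{s\in S_1,t\in B_p^n}\sum X_{i,j}s_it_j$. Here I would further split $B_p^n\subset T_1+T_2$ with $T_2=\{t\in B_p^n:|\supp t|\le k\}$ and $T_1=B_p^n\cap k^{-1/p}B_\infty^n$, for a carefully chosen $k\le n$. On $S_1\times T_2$, apply \eqref{eq:lem-S-bdd-T-bddsupp} (valid since $q\ge2$): the leading term is $\le m^{1/q}\sup_{t\in B_p^n}\|\sum_jt_jX_{1,j}\|_q$, and the error is at most $(k\Log n)^\gamman k^{(1/p^*-1/2)\vee0}e^{-q(2-q^*)/2}\le(k\Log n)^\gamman k^{(1/p^*-1/2)\vee0}e^{-q/4}$. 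On $S_1\times T_1$, write $X_{i,j}=X_{i,j}I_{\{|X_{i,j}|\le M\}}+X_{i,j}I_{\{|X_{i,j}|>M\}}$: the tail part is controlled by Lemma~\ref{lem:vial2tol2} together with $\|X^{>M}\|_{\ell_p^n\to\ell_q^m}\le n^{(1/p^*-1/2)\vee0}\|X^{>M}\|_{\ell_2^n\to\ell_2^m}$ (using $q\ge2$), giving an error $\lesssim_\alpha n^{(1/p^*-1/2)\vee0}(\sqrt n+\sqrt m)\exp(-c_\alpha M^{1/\gamman})$, negligible once $M$ is a suitable power of $\Log(mn)$; the bounded part is compared with the Gaussian matrix via Lemma~\ref{lem:comwithgauss} (whose only use of \eqref{alphareg} is through $|X^{\le M}_{i,j}|\le|X_{i,j}|$) or Lemma~\ref{lem:comwithgauss2}, and then estimated by Chevet's inequality \eqref{eq:Chevet} as in Lemma~\ref{lem:viaChevet}, exploiting the $\ell_\infty$-bound $k^{-1/p}$ on $T_1$ through $\sup_{t\in T_1}\|t\|_2\le k^{-(2-p)/(2p)}$ when $p\le2$ and $\sup_{t\in B_p^n}\|t\|_2=n^{1/p^*-1/2}$ when $p\ge2$.

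The hard part will be choosing the support level $k$ and the truncation level $M$ so that all of these error terms are simultaneously dominated by $m^{1/q}\sup_{t\in B_p^n}\|\sum_jt_jX_{1,j}\|_q+n^{1/p^*}$; this is the point at which the three hypotheses are used. The condition $q\ge24\gamman$ makes $2-q^*\ge\tfrac12$, so that the factors $e^{-q(2-q^*)/2}$ produced by $S_1$ decay like $e^{-q/4}$ and absorb the polylogarithmic losses coming from the nets and from the Gaussian comparison; the condition $q\le C(\alpha)\Log n$ keeps the polylogarithmic-in-$\Log n$ losses under control against $n^{1/p^*}$; and the condition $q^\gamman m^{1/q}\ge n^{1/3}$ lets one trade powers of $n$ for powers of $m^{1/q}$ (it is also natural here that $\sup_{t\in B_{3/2}^n}\|\sum_jt_jX_{1,j}\|_q\lesssim_\alpha q^\gamman$, which follows from a dyadic decomposition of $t$ together with \eqref{eq:compmomregalpha} and the contraction principle, and which controls the $T_2$-contribution when $k$ is not too large). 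The most delicate term is the ``doubly $\ell_\infty$-bounded'' piece $S_1\times T_1$, where no support bound is available and one has to play the Gaussian-comparison loss off against the decay $e^{-q/4}$ and the gain from the small $\ell_\infty$-norms of $s$ and $t$; balancing these is what pins down the admissible value of the constant $C(\alpha)$ in the hypothesis.
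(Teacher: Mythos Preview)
Your reduction to symmetric variables and your treatment of the piece $S_2=\{s\in B_{q^*}^m:|\supp s|\le e^{qq^*}\}$ via the embedding $S_2\subset 5B_1^m$ and Proposition~\ref{prop:above-log} are fine, and this part indeed mirrors what is done in Subsection~\ref{sect:above-4}. The problem is with the ``doubly $\ell_\infty$-bounded'' piece $S_1\times T_1$, where your scheme (truncate at level $M$, compare with Gaussians, apply Chevet) cannot be closed for any admissible choice of $k$ and $M$.

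Concretely, for the tail $X^{>M}$ to be negligible via Lemma~\ref{lem:vial2tol2} you need $M\gtrsim_\alpha \Log^\gamman m$ (since $\|s\|_2\|t\|_2\cdot\sqrt m\,e^{-c_\alpha M^{1/\gamman}}$ must be $\lesssim m^{1/q}n^{(1/p^*-1/2)\vee 0}$, and $q\le\Log m$). Then, after Lemma~\ref{lem:comwithgauss2}, the first Chevet term contributes $M\,e^{-q(2-q^*)/2}\sqrt{p^*}\,n^{1/p^*}$. Since $q$ may be exactly $24\gamman$ while $\Log m$ is arbitrarily large, the factor $M\,e^{-q(2-q^*)/2}\sim \Log^\gamman m\cdot e^{-c\gamman}$ is unbounded; the decay $e^{-q/4}$ cannot absorb a loss of order $\Log^\gamman m$. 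The second Chevet term is worse still: it contributes $M\,\sup_{t\in T_1}\|t\|_2\,\sqrt q\,m^{1/q}$, and when $p\ge 2$ one has $\sup_{t\in T_1}\|t\|_2=n^{1/p^*-1/2}$ regardless of $k$ (the $\ell_\infty$ constraint on $T_1$ gives nothing here), so one is left needing $M\sqrt q\lesssim_\alpha 1$, which is impossible. Even for $p\le 2$ the gain $k^{-(1/2-1/p^*)}$ vanishes as $p^*\downarrow 2$, so no single choice of $k$ works across the whole range $p^*\in[1,3]$. Thus the plan does not identify where the hypothesis $q^\gamman m^{1/q}\ge n^{1/3}$ is actually needed.

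The paper's proof avoids this by \emph{never splitting the $T$-side}. After an initial decomposition $B_{q^*}^m\subset \widetilde S_1+S_1$ at level $\Log^{-4\gamman}(mn)$ (not $e^{-q}$), the small-support piece $\widetilde S_1$ is handled by the dual estimate \eqref{eq:lem-T-bdd-S-bddsupp} (applied with $p\wedge 2$ so that $(p\wedge 2)^*\ge 2$). The remaining $\ell_\infty$-bounded piece $S_1$ is split \emph{again on the $S$-side} into $S_2$ (support $\le m\Log^{-q(\gamman+1)}(mn)$) and $S_3$ ($\ell_\infty$-norm $\le m^{-1/q^*}\Log^{(\gamman+1)q/q^*}(mn)$). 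On $S_2$ one does use Gaussian comparison plus Lemma~\ref{lem:viaChevet}, but now the support bound produces $l^{1/q}=m^{1/q}\Log^{-(\gamman+1)}(mn)$ in the second Chevet term, which exactly cancels the $\Log^\gamman(mn)\sqrt{\Log m}$ loss; and the first Chevet term is tamed because the threshold was chosen as $\Log^{-4\gamman}(mn)$, tuned to the Gaussian-comparison loss. On $S_3$ the $\ell_\infty$ bound is so small that $a^{(2-q^*)/2}\lesssim_\alpha n^{-\gamman}$, and this is precisely where the hypotheses $q^\gamman m^{1/q}\ge n^{1/3}$ and $q\le C(\alpha)\Log n$ are used, allowing one to apply \eqref{eq:lem-S-bdd-T-bddsupp} with $k=n$ (no $T$-splitting needed).
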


\begin{proof}
Without loss of generality we may assume that $\Ex X_{i,j}^2=1$ and $C(\alpha)\ge 2$.
Let 
\[
\widetilde{S}_1
=\bigl\{s\in B_{q^*}^m\colon\ |\supp(s)|\leq \Log^{ 4\gamman q^*}(mn)\bigr\},
\quad
S_1=B_{q^*}^m\cap \Log^{-4\gamman}(mn)B_{\infty}^m.
\]
Then $B_{q^*}^m\subset S_1+\widetilde{S}_1$.

If $\Log m\le C^2(\alpha) \Log^2n$, then inequality \eqref{eq:lem-T-bdd-S-bddsupp} from 
Lemma~\ref{lem:S-bdd-T-bddsupp}  (applied with  $b=1$, $p\wedge 2$ instead of $p$ and 
$l= \Log(mn)^{4\gamman q^*}\leq \Log(mn)^{6\gamman}$) yields
\begin{align*}
\Ex\sup_{s\in \widetilde{S}_1, t \in B_{ p\wedge 2}^n} \sum_{i\le m, j\le n} X_{i,j}s_it_j
& \lesssim_{\alpha} n^{1/(p^*\vee 2)}
\sup_{s\in \widetilde{S}_1} \Bigl\|\sum_{i=1}^m X_{i,1}s_i \Bigr\|_{  p^*\vee 2}
+ (\Log n)^{C_1(\alpha)}
\\ & \lesssim_{\alpha}  
n^{1/(p^*\vee 2)} \sup_{s\in B_{q^*}^m} \Bigl\|\sum_{i=1}^m X_{i,1}s_i \Bigr\|_{p^*}
+ n^{1/3}
\\
& \lesssim_\alpha 
n^{1/ (p^*\vee 2)} \sup_{s\in B_{q^*}^m} \Bigl\|\sum_{i=1}^m X_{i,1}s_i \Bigr\|_{p^*} .
\end{align*}
In the case $\Log m \ge C^2(\alpha) \Log^2n$ we have $m^{1/q}\ge e^{\Log m/(C(\alpha)\Log n)}\ge e^{(\Log m)^{1/2}}$, so now inequality \eqref{eq:lem-T-bdd-S-bddsupp} yields
\begin{align*}
\Ex\sup_{s\in \widetilde{S}_1, t \in B_{ p\wedge 2}^n} \sum_{i\le m, j\le n} X_{i,j}s_it_j
& \lesssim_{\alpha} n^{1/(p^*\vee 2)}
\sup_{s\in \widetilde{S}_1} \Bigl\|\sum_{i=1}^m X_{i,1}s_i \Bigr\|_{ p^*\vee 2} 
+ (\Log m)^{C_2(\alpha)}
\\ & \lesssim_{\alpha}    n^{1/ (p^*\vee 2)} 
\sup_{s\in B_{q^*}^m} \Bigl\|\sum_{i=1}^m X_{i,1}s_i \Bigr\|_{ p^*} +m^{1/q}.
\end{align*}
Thus, in any case
\begin{align}
\label{eq:lem-q-ge-8beta-ge-p*-1}
\Ex\sup_{s\in \widetilde{S}_1, t \in B_{p}^n} \sum_{i\le m, j\le n} X_{i,j}s_it_j
&\leq
n^{(1/p^*-1/2)\vee 0}\, \Ex\sup_{s\in \widetilde{S}_1, t \in B_{ p\wedge 2}^n} \sum_{i\le m, j\le n} X_{i,j}s_it_j
\notag
\\
&\lesssim_\alpha n^{1/p^*}
\sup_{s\in B_{q^*}^m} \Bigl\|\sum_{i=1}^m X_{i,1}s_i \Bigr\|_{ p^*} 
+m^{1/q}n^{(1/p^*-1/2)\vee 0}.
\end{align}

Let 
\begin{align*}
S_2&=\bigl\{s\in B_{q^*}^m\colon\ |\supp(s)|\leq m\Log^{-q(\gamman+1)} (mn) \bigr\}\cap \Log^{-4\gamman}(mn)B_{\infty}^m,
\\
S_3&=B_{q^*}^m\cap m^{-1/q^*}\Log^{(\gamman+1)q/q^*}(mn)B_\infty^m.
\end{align*}
Then $S_1\subset S_2+S_3$.

Lemmas \ref{lem:comwithgauss} and \ref{lem:viaChevet}
(applied with $l=m\Log^{-q(\beta+1)}(mn)$ and $a=\Log^{-4\beta}(mn)$), and inequality 
$q^* \leq  \frac 32$  yield
\begin{align}\label{eq:lem-q-ge-8beta-ge-p*-2}
\Ex\sup_{s\in S_2, t \in B_p^n} &\sum_{i\le m, j\le n} X_{i,j}s_it_j
\notag
\\
\notag
&\leq \Log^{\gamman}(mn)\Big(\Log^{-2\gamman(2-q^*)}(mn)n^{1/p^*}+n^{(1/p^*-1/2)\vee 0}m^{1/q}\Log^{-\gamman-1/2}(mn)\Bigr)
\\
&\leq n^{1/p^*}+n^{(1/p^*-1/2)\vee 0}m^{1/q}.
\end{align}

Moreover, if $\Log m\le C^2(\alpha) \Log^2n$, then inequalities 
$n^{1/3}\leq m^{1/q}q^\gamman\leq m^{1/q}\Log^\gamman m$ and 
$q/(3q^*) \ge  4\gamman+q/(12q^*)$ imply
\[
m^{1/q^*}\Log^{-(\gamman+1)q/q^*}(mn)
\geq n^{q/(3q^*)}\Log^{-\gamman q/q^*} m\Log^{-(\gamman+1)q/q^*}(mn)
\gtrsim_\alpha n^{ 4\gamman},
\]
and if $\Log m\ge C^2(\alpha) \Log^2n\ge\Log^2n$, then
\begin{align*}
m^{1/q^*}\Log^{-(\gamman+1)q/q^*}(mn)
&\geq e^{\Log m/q^* } \Log^{-C_3(\alpha)q}m
\ge \exp\bigl( (\Log m)/2 - C_4(\alpha) \Log n \cdot \ln(\Log m)\bigr)
\\ 
& \gtrsim_{\alpha} e^{(\Log^2n)/4}\gtrsim_{\alpha} n^{ 4 \gamman}.
\end{align*}
Since $q^* \leq  \frac 32$, in both cases
we  have 
\[
(m^{1/q^*}\Log^{-(\gamman+1)q/q^*}(mn))^{(2-q^*)/2} \gtrsim_{\alpha}  n^{\beta}.
\]
Therefore, inequality \eqref{eq:lem-S-bdd-T-bddsupp} from Lemma~\ref{lem:S-bdd-T-bddsupp} 
(applied with $a=m^{-1/q^*}\Log^{(\gamman+1)q/q^*}(mn)$ and $k=n$) yields 
\begin{align}\label{eq:lem-q-ge-8beta-ge-p*-3}
\Ex\sup_{s\in S_3, t \in B_p^n} \sum_{i\le m, j\le n} X_{i,j}s_it_j 
 \lesssim_{\alpha} m^{1/q} \sup_{t\in B_p^n} \Bigl\|\sum_{j=1}^n X_{1,j}t_j \Bigr\|_q
 + n^{(1/p^*-1/2)\vee 0}. 
\end{align}

Since
\[
n^{(1/p^*-1/2)\vee 0} 
= \sup_{t\in B_p^n} \|t\|_2 
\le\sup_{t\in B_p^n}\Bigl\|\sum_{j=1}^n X_{1,j}t_j \Bigr\|_q,
\]
estimates \eqref{eq:lem-q-ge-8beta-ge-p*-1}-\eqref{eq:lem-q-ge-8beta-ge-p*-3} 
yield the assertion.
\end{proof}

%%%%%%%%%%%%%%%%%%%%%%%%%%%%%%%%%%%%%%%%%%%%

\subsection{Case $ 24\gamman \geq q \geq p^*$}

Once we prove the upper bound in the case $ 24\gamman \geq q \geq p^*$, 
the upper bound in the case $ 24\gamman \geq p^* \geq q$ 
follows by duality   \eqref{eq:duality}. 
We first deal with the case $p^*\ge 2$ and then move to the case $2\ge p^*$ at the end of 
this subsection.

Let us begin with the proof in the case $p^*=q\ge 2$, when an interpolation argument works.
	
\begin{lem}	
\label{lem:q=p*}
If $p^*=q\ge 2$, then the upper bound in Theorem~\ref{thm:rect} holds.
\end{lem}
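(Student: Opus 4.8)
**Proof proposal for Lemma \ref{lem:q=p*}.**

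The plan is to exploit the self-duality of the case $p^*=q$ together with the two endpoint estimates we already have: the $\ell_2^n\to\ell_2^m$ bound of Proposition~\ref{prop:twosided22} and the $\ell_p^n\to\ell_\infty^m$ bound of Proposition~\ref{prop:above-log} (equivalently the $\ell_1^n\to\ell_q^m$ bound by duality), and then to interpolate. Concretely, write $\theta\in[0,1)$ so that $\frac1q=\frac{1-\theta}{2}$, i.e.\ $q=\frac{2}{1-\theta}$ and simultaneously $\frac1{p^*}=\frac{1-\theta}{2}$, which is consistent precisely because $p^*=q$; then $p$ is the conjugate of $p^*$, so $\frac1p=\frac{1+\theta}{2}$. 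By the Riesz--Thorin interpolation theorem applied pointwise to the (random) operator given by the matrix $(X_{i,j})$, interpolating between $\ell_2^n\to\ell_2^m$ (weight $1-\theta$) and $\ell_1^n\to\ell_\infty^m$ (weight $\theta$) gives, for every realization,
\[
\bigl\|(X_{i,j})\bigr\|_{\ell_p^n\to\ell_q^m}
\le \bigl\|(X_{i,j})\bigr\|_{\ell_2^n\to\ell_2^m}^{\,1-\theta}\,
\bigl\|(X_{i,j})\bigr\|_{\ell_1^n\to\ell_\infty^m}^{\,\theta}
= \bigl\|(X_{i,j})\bigr\|_{\ell_2^n\to\ell_2^m}^{\,1-\theta}\,
\bigl(\max_{i,j}|X_{i,j}|\bigr)^{\theta},
\]
since $\|(X_{i,j})\|_{\ell_1^n\to\ell_\infty^m}=\max_{i,j}|X_{i,j}|$. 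Taking expectations and applying Hölder's inequality with exponents $\frac1{1-\theta}$ and $\frac1\theta$ yields
\[
\Ex\bigl\|(X_{i,j})\bigr\|_{\ell_p^n\to\ell_q^m}
\le \Bigl(\Ex\bigl\|(X_{i,j})\bigr\|_{\ell_2^n\to\ell_2^m}\Bigr)^{1-\theta}
\Bigl(\Ex\max_{i,j}|X_{i,j}|\Bigr)^{\theta}.
\]

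Now I would bound the two factors. For the first, Proposition~\ref{prop:twosided22} (we may assume $\Ex X_{i,j}^2=1$) gives $\Ex\|(X_{i,j})\|_{\ell_2^n\to\ell_2^m}\lesssim_\alpha \sqrt n+\sqrt m$, hence $(\sqrt n+\sqrt m)^{1-\theta}\sim (n^{1/2}+m^{1/2})^{1-\theta}\sim n^{(1-\theta)/2}+m^{(1-\theta)/2}=n^{1/p^*}+m^{1/q}$ — exactly the shape we want, up to a constant. For the second, \eqref{eq:maxk} and the regularity assumption \eqref{alphareg} give $\Ex\max_{i,j}|X_{i,j}|\le e\|X_{1,1}\|_{\Log(mn)}\lesssim_\alpha \Log^\gamman(mn)$, with $\gamman=\log_2\alpha$. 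Since we are in the range where we may assume $q\le\Log m$ and $p^*\le\Log n$ (by Subsection~\ref{seq:case-leqLog}), we have $\Log(mn)\le\Log m+\Log n$ and also $q=p^*$; the factor $(\Log^\gamman(mn))^\theta$ is then absorbed because $\theta$ is bounded away from $1$ only when $q$ is bounded, and when $q$ grows the competing powers $n^{1/p^*},m^{1/q}$ must be compared against a fixed power of $\Log(mn)$. More precisely, I would split on whether $q\le 2\Log\Log(mn)$ or not: in the first case $\theta$ is close to $1$ but $\Log(mn)$ is bounded by a constant depending only on how $q$ relates to $\log\log$, so the logarithmic factor is $\lesssim_\alpha 1$ after comparing with $\max\{m^{1/q},n^{1/p^*}\}\ge 2^{1/q}\ge 1$; in the second case $\Log^{\gamman\theta}(mn)\le\Log^\gamman(mn)\lesssim_\alpha \max\{m^{1/q},n^{1/p^*}\}$ by the same elementary inequality (comparing $\gamman\ln\Log(mn)$ with $\tfrac12\ln(mn)/q$) used at the end of the proof of Proposition~\ref{prop:q-above-4}. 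Either way,
\[
\Ex\bigl\|(X_{i,j})\bigr\|_{\ell_p^n\to\ell_q^m}
\lesssim_\alpha n^{1/p^*}+m^{1/q},
\]
which is the desired upper bound since both $\sup_{t\in B_p^n}\|\sum_j t_jX_{1,j}\|_{q\wedge\Log m}$ and $\sup_{s\in B_{q^*}^m}\|\sum_i s_iX_{i,1}\|_{p^*\wedge\Log n}$ are at least $\|X_{1,1}\|_2=1$ (take a single coordinate of $t$ or $s$ equal to $1$).

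The main obstacle is bookkeeping the logarithmic factor $(\Log^\gamman(mn))^\theta$ so that it really gets absorbed into $n^{1/p^*}+m^{1/q}$ uniformly over all $m,n$ and all admissible $q=p^*$; the interpolation step itself is routine, but one has to be careful that Riesz--Thorin applies with the right normalization (it does, with constant $1$, for these specific $\ell_p\to\ell_q$ spaces) and that the Hölder split of the expectation does not cost anything. A secondary point is justifying that we may restrict to $q\le\Log m$ and $p^*\le\Log n$ — this is exactly what Subsection~\ref{seq:case-leqLog} provides, so no extra work is needed there.
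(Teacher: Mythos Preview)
Your interpolation setup is clean, but the argument breaks when you try to absorb the factor $(\Log^{\gamman}(mn))^{\theta}$. After Riesz--Thorin and H\"older you correctly arrive at
\[
\Ex\bigl\|(X_{i,j})\bigr\|_{\ell_p^n\to\ell_q^m}
\lesssim_\alpha (n^{1/p^*}+m^{1/q})\cdot \bigl(\Log^{\gamman}(mn)\bigr)^{\theta},
\qquad \theta=1-\tfrac{2}{q}.
\]
For this to give the desired bound you would need $(\Log^{\gamman}(mn))^{\theta}\lesssim_\alpha 1$, since the prefactor already equals the target. But for any fixed $q>2$ (so fixed $\theta>0$), this quantity tends to infinity with $mn$; e.g.\ for $q=p^*=5/2$, $m=n$, you obtain $n^{2/5}(\Log n)^{\gamman/5}$, not $n^{2/5}$. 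Your Case~2 argument only shows $\Log^{\gamman}(mn)\lesssim_\alpha \max\{m^{1/q},n^{1/p^*}\}$, which yields the square $(n^{1/p^*}+m^{1/q})^2$ rather than the first power. Your Case~1 description (``$\Log(mn)$ is bounded'') is not correct either: the condition $q\le 2\Log\Log(mn)$ places no upper bound on $mn$. So the endpoint $\ell_1^n\to\ell_\infty^m$ is simply too crude---the norm there is $\max_{i,j}|X_{i,j}|\sim_\alpha \Log^{\gamman}(mn)$, and any positive interpolation weight on it contaminates the answer with an unbounded factor.

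The paper avoids this by interpolating between $\ell_2^n\to\ell_2^m$ and $\ell_{3^*}^n\to\ell_3^m$, using that the case $p^*=q=3$ is already established in Subsection~\ref{sect:above-4} (and $p^*=q\ge 3$ in general). Both endpoints then have bounds of the exact form $(n\vee m)^{1/q_0}$ with $q_0\in\{2,3\}$, so the interpolated bound is $(n\vee m)^{1/q}\sim n^{1/p^*}+m^{1/q}$ with no stray logarithm. The moral is that you need an endpoint where the operator-norm bound is already a clean power of the dimension; $\ell_1\to\ell_\infty$ does not provide that.
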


\begin{proof}
By Subsections~\ref{seq:case-leq2} and \ref{sect:above-4} we know that the assertion holds 
when $p^*=q\in\{2\} \cup[3,\infty]$. 

Assume without loss of generality that $\Ex X_{i,j}^2=1$. 
Fix $p^*=q\in (2,3)$ and let $\theta\in (0,1)$ be such that $\frac 1q = \frac{\theta}2+\frac{1-\theta}{3}$, 
i.e., $\frac 1p = 1-\frac 1q = \frac{\theta}2 +\frac{1-\theta}{3^*}$. 
Then  \eqref{eq:compmomregalpha} implies that
\begin{equation}	
\label{eq:lem-q=p*}
\sup_{t\in B_p^n}\Bigl\|\sum_{j=1}^nt_jX_{1,j}\Bigr\|_{ q\wedge \Log m} 
\sim_\alpha \sup_{t\in B_p^n}\Bigl\|\sum_{j=1}^nt_jX_{1,j}\Bigr\|_{ 2} =1 ,
\end{equation}
and similarly
\begin{equation}	
\label{eq:lem-q=p*-2}
\sup_{s\in B_{q^*}^m}\Bigl\|\sum_{i=1}^ms_iX_{i,1}\Bigr\|_{ p^*\wedge \Log n} \sim_\alpha  1 ,
\end{equation}
By the Riesz-Thorin interpolation theorem, H{\"o}lder's inequality, \eqref{eq:lem-q=p*} and \eqref{eq:lem-q=p*-2} we get
\begin{align*}
\Ex\bigl\|(X_{i,j})_{i,j}\bigr\|_{\ell_p^n\rightarrow\ell_q^m}
& \le
\Ex \bigl(\bigl\|(X_{i,j})_{i,j}\bigr\|_{\ell_{2}^n\rightarrow\ell_2^m}^{\theta}
\bigl\|(X_{i,j})_{i,j}\bigr\|_{\ell_{3^*}^n\rightarrow\ell_{3}^m}^{1-\theta} \bigr)
\\ 
&\le 
\Bigl(\Ex\bigl\|(X_{i,j})_{i,j}\bigr\|_{\ell_{2}^n\rightarrow\ell_2^m}\Bigr)^{\theta}
\Bigl( \Ex \bigl\|(X_{i,j})_{i,j}\bigr\|_{\ell_{3^*}^n\rightarrow\ell_{3}^m}\Bigr)^{1-\theta}
\\
& \lesssim_{\alpha} 
(n \vee m)^{\theta/2}(n\vee m)^{(1-\theta)/3} 
= (n\vee m)^{1/q} \sim n^{1/p^*}+m^{1/q}.
\qedhere
\end{align*}
\end{proof}

\begin{proof}[Proof of the upper bound in Theorem~\ref{thm:rect} in the case $ 24\gamman \geq q \geq p^* \geq 2$]
 By Remark~\ref{rem:symmetrization} it suffices to assume that  $X_{i,j}$'s are symmetric and $\alpha\ge  \sqrt{2}$. 
 Then $\gamman=\log_2\alpha\ge  1/2$.
  Inequality \eqref{eq:compmomregalpha} implies that in the case $ 24\gamman \geq q \geq p^* \geq 2$ the upper bound in Theorem~\ref{thm:rect}  is  equivalent to 
\begin{equation}
\label{eq:est2show}
\Ex\|(X_{i,j})_{i,j}\|_{\ell_{p}^n\to\ell_q^n}
\lesssim_\alpha n^{1/p^*}+m^{1/q}.
\end{equation}

Observe first that we may assume that $m\geq n$.
Indeed, if $m\leq n$ then Lemma~\ref{lem:q=p*} yields
\[
\Ex\|(X_{i,j})_{i,j}\|_{\ell_{p}^n\to\ell_q^m}
\leq \|\mathrm{Id}\|_{\ell_{p}^n\to\ell_{q^*}^n}\Ex\|(X_{i,j})_{i,j}\|_{\ell_{q^*}^n\to\ell_q^m} 
\leq n^{1/q^*-1/p}\Ex\|(X_{i,j})_{i,j}\|_{\ell_{q^*}^n\to\ell_q^{ m}}
\lesssim
 n^{1/p^*}.
\]

Thus, in the sequel we assume that $2\leq p^*\leq q\leq  24\gamman$ and $m\geq n$. 
Define
\[
k_0:=\inf\biggl\{k \in \{0,1,\ldots\}\colon\ 
2^k\geq \frac{5}{\ln \alpha}\frac{2-q^*}{q^*}\Log m\biggr\}. 
\]
 Observe that
\begin{equation}
\label{eq:estk_0}
k_0=0 \quad \mbox{or}\quad 2^{k_0}\leq \frac{10}{\ln \alpha}\frac{2-q^*}{q^*}\Log m.
\end{equation}

By Lemma \ref{lem:vial2tol2} and the definition of $k_0$ we have
\begin{align*}
\Ex\bigl\|\bigl(X_{i,j}I_{\{|X_{i,j}|\geq \alpha^{k_0}\}}\bigr)_{i\leq m,j\leq n}\bigr\|_{\ell_p^n\to\ell_q^m}
&\leq
\Ex\bigl\|\bigl(X_{i,j}I_{\{|X_{i,j}|\geq \alpha^{k_0}\}}\bigr)_{i\leq m,j\leq n}\bigr\|_{\ell_2^n\to\ell_2^m}
\\
&\lesssim_\alpha \sqrt{m}\exp\bigl(-\frac{\ln \alpha}{10} 2^{k_0}\bigr)
\\
&\leq m^{\frac{1}{2}-\frac{2-q^*}{2q^*}}=m^{1/q}.
\end{align*}

By Lemma~\ref{lem:comwithgauss2} and two-sided bound \eqref{eq:formula-Gaussians} we have
\[
\Ex\bigl\|\bigl(X_{i,j}I_{\{|X_{i,j}|\leq 1\}}\bigr)_{i\leq m,j\leq n}\bigr\|_{\ell_p^n\to\ell_q^m}
\lesssim \Ex\|(g_{i,j})_{i\leq m,j\leq n}\|_{\ell_p^n\to\ell_q^m}\lesssim_\alpha n^{1/p^*}+m^{1/q}.
\]

We have $B_{q^*}^m\subset S_1+S_2$, where
\[
S_1=\{s\in B_{q^*}^m\colon\ |\supp(s)|\leq m^{1/(2\gamman q)}\},
\quad S_2=B_{q^*}^m\cap m^{-1/(2\gamman qq^*)}B_\infty^m.
\] 
Inequality \eqref{eq:lem-T-bdd-S-bddsupp} from Lemma~\ref{lem:S-bdd-T-bddsupp} applied 
with $b=1$, $l=m^{1/(2\gamman q)}$
shows that
\begin{equation}\label{eq:proof-thm1-p*q-between2and8beta-1}
\Ex\sup_{ s\in S_1,t\in B_p^n}
\sum_{i\leq m,j\leq n}X_{i,j}s_it_j
\lesssim_\alpha n^{1/p^*}+m^{1/q}.
\end{equation}

Since $2\gamman qq^*\leq   100\gamman^2$  we have
\[
S_2\subset S_3:=B_{q^*}^m\cap m^{-1/(100\gamman^2)}B_\infty^m.
\]
 Thus, to finish the proof it is enough to upper bound  the following quantity
\begin{align*}
\Ex\sup_{s\in  S_3,t\in B_p^n}
\sum_{i\leq m,j\leq n}X_{i,j}I_{\{1\leq |X_{i,j}|<\alpha^{k_0}\}}s_it_j
\leq \sum_{k=1}^{k_0}\Ex\sup_{s\in  S_3,t\in B_p^n}
\sum_{i\leq m,j\leq n}X_{i,j}I_{\{\alpha^{k-1}\leq |X_{i,j}|<\alpha^{k}\}}s_it_j.
\end{align*}

Let $u_1,\ldots,u_{k_0}$ be positive numbers to be chosen later.  
We decompose the set $ S_3$ in the following way, depending on $k$:
\[
S_3\subset S_{ 4,k}+S_{ 5,k},
\]
where
\[
S_{ 4,k}:=
\{s\in B_{q^*}^m\colon\ |\supp s|\leq m/u_k\}\cap m^{-1/( 100\gamman^2)}B_\infty^m,
\quad
S_{ 5,k}:=B_{q^*}^m\cap \Bigl(\frac{u_k}{m}\Bigr)^{1/q^*}B_\infty^m.
\]
Thus,
\begin{multline*}
\Ex\sup_{s\in { S_3}, t\in B_p^n}
\sum_{i\leq m,j\leq n}X_{i,j}I_{\{\alpha^{k-1}\leq|X_{i,j}|<\alpha^{k}\}}s_it_j
\\
\leq \Ex\sup_{s\in S_{ 4,k}, t\in B_p^n}
\sum_{i\leq m,j\leq n}X_{i,j}I_{\{|X_{i,j}|<\alpha^{k}\}}s_it_j
+\Ex\sup_{s\in S_{ 5,k}, t\in B_p^n}
\sum_{i\leq m,j\leq n}X_{i,j}I_{\{\alpha^{k-1}\leq |X_{i,j}|\}}s_it_j.
\end{multline*}

Observe that $B_p^n\subset B_2^n$ and
\[
\sup_{s\in S_{ 5,k}}\|s\|_2
\leq \sup_{s\in S_{ 5,k}}\|s\|_{q^*}^{q^*/2}\|s\|_\infty^{(2-q^*)/2}\leq \Bigl(\frac{u_k}{m}\Bigr)^{\frac{2-q^*}{2q^*}}.
\]
Hence, Lemma \ref{lem:vial2tol2} yields
\begin{align*}
\Ex\sup_{s\in S_{ 5,k}, t\in B_p^n} \sum_{i\leq m,j\leq n}X_{i,j}I_{\{\alpha^{k-1}\leq |X_{i,j}|\}}s_it_j
&\leq 
\Bigl(\frac{u_k}{m}\Bigr)^{\frac{2-q^*}{2q^*}}
\Ex\|(X_{i,j}I_{\{\alpha^{k-1}\leq |X_{i,j}|\}})\|_{\ell_2^m\to\ell_2^n}
\\
&\lesssim_\alpha m^{1/q}u_k^{\frac{2-q^*}{2q^*}}\exp\Bigl(-\frac{\ln \alpha}{10}2^{k-1}\Bigr).
\end{align*}
Thus, if we choose
\[
u_k:=\exp\Bigl(\frac{q^*\ln\alpha}{20(2-q^*)}2^k\Bigr),
\]
we get 
\[
\sum_{k=1}^{k_0}
\Ex\sup_{s\in S_{ 5,k}, t\in B_p^n}
\sum_{i\leq m,j\leq n}X_{i,j}I_{\{\alpha^{k-1}\leq |X_{i,j}|\}}s_it_j
\lesssim_\alpha \sum_{k = 1}^\infty m^{1/q}\exp\Bigl(-\frac{\ln \alpha}{40}2^k\Bigr)
\lesssim_\alpha m^{1/q}.
\]

Lemmas \ref{lem:comwithgauss2} and \ref{lem:viaChevet2} applied with $l=\frac m{u_k}$, 
$a=m^{-1/( 100\gamman^2)}$, and  $\gamma=24\gamman$ yield
\[
\Ex\sup_{s\in S_{ 4,k},t\in B_p^n}\sum_{i\leq m,j\leq n}X_{i,j}I_{\{|X_{i,j}|<\alpha^{k}\}}s_it_j
\lesssim_\alpha 
\alpha^k\Bigl(m^{-\frac{(2-q^*)}{ 200\gamman^2}}n^{1/p^*}+\sqrt{\Log u_k}(m/u_k)^{1/q}\Bigr).
\]
Property  \eqref{eq:estk_0}
yields
\begin{align*}
\sum_{k=1}^{k_0} \alpha^k m^{-\frac{(2-q^*)}{ 200\gamman^2}}n^{1/p^*}
&
\lesssim \alpha^{k_0} I_{\{k_0\neq 0\}} m^{-\frac{(2-q^*)}{ 200\gamman^2}}n^{1/p^*} 
\lesssim_{\alpha}
\Bigl(\frac{10}{\ln \alpha}\frac{2-q^*}{q^*} \Log m\Bigr)^{\log_2\alpha}
e^{-\frac{(2-q^*)}{ 200\gamman^2}\ln m}n^{1/p^*}
\\
&\lesssim_\alpha  n^{1/p^*}\sup_{x>0}x^{\log_2\alpha}e^{-x}\lesssim_\alpha n^{1/p^*}.
\end{align*}

Finally, since $q\leq  24\gamman$  and $u_k\geq 1$ we get 
$\sqrt{ \Log u_k}(m/u_k)^{1/q}\lesssim_\alpha m^{1/q}u_k^{-1/(2q)}$, so
\[
\sum_{k=1}^{k_0} \alpha^k\sqrt{\Log u_k}(m/u_k)^{1/q}
\lesssim_\alpha m^{1/q}\sum_{k\geq 1}\alpha^k\exp\Bigl(-\frac{q^*\ln\alpha}{40q(2-q^*)}2^k\Bigr)
\lesssim_\alpha m^{1/q}.
\qedhere
\]
\end{proof}

The proof in the case $ 24\gamman \geq q  \geq 2 \geq p^*$ is easy and bases on the already proven case when  $q\ge 2=p^*$ (see the proof above).

\begin{proof}[Proof of the upper bound in Theorem~\ref{thm:rect} in the case $ 24\gamman \geq q \geq 2 \geq p^*$]
 Inequality \eqref{eq:compmomregalpha} implies that in the case $ 24\gamman \geq q \geq  2  \geq p^*$ the upper bound in Theorem~\ref{thm:rect}  is  equivalent to 
\begin{equation}
\label{eq:est2show-under2-2}
\Ex\|(X_{i,j})_{i,j}\|_{\ell_{p}^n\to\ell_q^m}
\lesssim_\alpha n^{1/p^*}+m^{1/q}n^{1/p^*-1/2}.
\end{equation}
In particular, an already obtained 
 upper bound in the case $ 24\gamman\geq q\ge 2=p^*$ yields
 \begin{equation*}
\label{eq:est2show-under2-1}
\Ex\|(X_{i,j})_{i,j}\|_{\ell_{2}^n\to\ell_q^m}
\lesssim_\alpha n^{1/2}+m^{1/q},
\end{equation*}
 so
\begin{align*}
\Ex\|(X_{i,j})_{i,j}\|_{\ell_{p}^n\to\ell_q^m} 
&\le
\|\operatorname{Id}\|_{\ell_p^n \to \ell_2^n} \Ex\|(X_{i,j})_{i,j}\|_{\ell_{2}^n\to\ell_q^m}
\lesssim_{\alpha} n^{1/p^*-1/2}(n^{1/2}+m^{1/q}) 
\\
&= n^{1/ p^*}+m^{1/q} n^{1/p^*-1/2},
\end{align*}
and thus, \eqref{eq:est2show-under2-2} holds.
\end{proof}

%%%%%%%%%%%%%%%%%%%%%%%%%%%%%%

  \bibliographystyle{amsplain}
  \bibliography{matrices}

\end{document}